\newtheorem{thm}{Theorem}[section]
\newtheorem{lemma}{Lemma}[section]
\theoremstyle{definition}
\newtheorem{define}{Definition}[section]
\theoremstyle{remark}
\newtheorem{rem}{Remark}[section]
\numberwithin{equation}{section}
\begin{document}
\bigskip

\centerline{\Large\bf  Global regularity of the two-dimensional regularized }
\smallskip
\centerline{\Large\bf MHD equations}
\bigskip

\centerline{Zhuan Ye}

\medskip

\centerline{Department of Mathematics and Statistics, Jiangsu Normal University,
}
\medskip

\centerline{101 Shanghai Road, Xuzhou 221116, Jiangsu, PR China}

\medskip

\centerline{E-mail: \texttt{yezhuan815@126.com
}}

\bigskip

{\bf Abstract:}~~%
In this paper, we consider the Cauchy problem of the two-dimensional regularized incompressible magnetohydrodynamics equations. The main objective of this paper is to establish the global regularity of classical solutions of the magnetohydrodynamics equations with the minimal dissipation. Consequently, our results significantly improve the previous works.

{\vskip 1mm
 {\bf AMS Subject Classification 2010:}\quad 35Q35; 35B65; 76D03.

 {\bf Keywords:}
MHD equations; Global regularity; Fractional dissipation.}

\vskip .4in
\section{Introduction and main results}

Whether or not the smooth solutions of the classical two-dimensional (2D) incompressible magnetohydrodynamics (MHD) equations with only velocity dissipation or with only magnetic diffusion develop finite time singularities is an extremely difficult problem and remains open. The main difficulty arising here is a strong impact of the higher modes to the leading order dynamics through the nonlinearity which a priori may destroy the regularity of a solution and thus lead to the formation of singularities. To bypass this difficulty, considerable models have been proposed in order to capture the leading dynamics of the flow on the one hand and somehow suppress the higher modes on the other hand, see for example \cite{Cheskidovhot,Foiasht,Ilyintt,HO,MininniMp,LinshizT,Zhoufan,Zhoufanna11}.
In this paper, we are interested in studying the Cauchy
problem of the following 2D regularized MHD equations
\begin{equation}\label{RG2DMHD}
\left\{\aligned
&\partial_{t}v+(u\cdot\nabla)v+(-\Delta)^{\alpha} v+\sum_{j=1}^{2}v_{j}\nabla u_{j}+\nabla \Big(p+\frac{1}{2} |b|^{2}\Big)=(b\cdot\nabla) b, \\
&\partial_{t}b+(u\cdot\nabla)b+(-\Delta)^{\beta} b=(b\cdot\nabla)u,\\
&v=u+(-\Delta)^{\gamma}u,\\
&\nabla\cdot u=\nabla\cdot v=\nabla\cdot b=0,\\
&v(x,0)=v_{0}(x),\ \ b(x,0)=b_{0}(x),\quad x\in \mathbb{R}^{2},
\endaligned \right.
\end{equation}
where $v=(v_{1},\,v_{2})$ denotes the velocity vector, $b=(b_{1},\,b_{2})$ the magnetic field, $u=(u_{1},\,u_{2})$ the "filtered" velocity and $p$ the scalar "filtered" pressure, respectively, $\alpha\in[0,\,2]$, $\beta\in[0,\,2]$ and $\gamma\in[0,\,2]$ are real parameters. $v_{0}(x)$ and $b_{0}(x)$ are the given initial data satisfying $\nabla\cdot v_{0}=\nabla\cdot b_{0}=0$.
The fractional Laplacian operator $(-\Delta)^{\sigma}$ is defined through
the Fourier transform, namely
$$\widehat{(-\Delta)^{\sigma} f}(\xi)=|\xi|^{2\sigma}\hat{f}(\xi),$$
where
$$\hat{f}(\xi)=\frac{1}{(2\pi)^{2}}\int_{\mathbb{R}^{2}}e^{-ix\cdot\xi}f(x)\,dx.$$
For simplicity, we use the notation $\Lambda:=(-\Delta)^{\frac{1}{2}}$.
We remark the convention that by $\alpha=0$ we mean that there is no dissipation in $\eqref{RG2DMHD}_{1}$, and similarly $\beta=0$ represents that there is no dissipation in $\eqref{RG2DMHD}_{2}$. Moreover, $\gamma=0$ represents that the system $\eqref{RG2DMHD}$ without regularizing, namely, $v=2u$.
Recently in the literature great attention has been devoted to the study of fractional Laplace-type problems, not only for pure mathematical generalization, but also for various applications in different fields.
In fact, the application background of fractional problems can be founded in fractional quantum mechanics \cite{Laskin}, probability \cite{Applebaum}, overdriven detonations in gases \cite{Clavind}, anomalous diffusion in semiconductor growth \cite{Woyczyh}, physics and chemistry \cite{Metzlerk} and so on.

\vskip .1in
Let us briefly discuss the rich history concerning the system \eqref{RG2DMHD}.
When $\gamma=0$, \eqref{RG2DMHD} reduces to the 2D generalized magnetohydrodynamics (GMHD) system, which describes the motion of electrically conducting fluids and has broad applications in applied sciences such as astrophysics, geophysics and plasma physics (see, e.g., \cite{Davidson}).
Besides their wide physical applicability, the GMHD equations are also of great interest in mathematics.
As a coupled system, the GMHD equations contain much richer structures than
the Navier-Stokes-type equations. The GMHD equations are not merely a combination of two parallel Navier-Stokes type equations but an interactive and integrated system. These distinctive features make analytic studies a great challenge but offer new opportunities.
Since the work \cite{Wujde3}, the mathematical analysis (specially the global regularity problem) of the 2D GMHD equations
has attracted considerable interests in the last few years and progress has been made (see \cite{CWY,Cailei,CW1,FNZ,JZ01,Leizhou,LINXUZ,TYZ1,WEIZHANG,WuWuXu,
WuJMFM,Yedfsdf8,Yamzaki14aml,TrYuna,YuanZhao} with no intention to be complete). We remark that the latest global regularity results of the 2D GMHD equations can be summarized as
$$(1)\ \ \alpha>0,\,\,\beta=1;\quad (2)\ \ \alpha=0,\,\,\beta>1;\quad (3)\ \ \alpha=2,\,\,\beta=0;\quad (4)\ \ \alpha+\beta=2,\,\,\beta<1,$$
see \cite{CWY,FNZ,JZ01} for details (one also refers to \cite{Agelas,Yedfsdf8,YuanZhao} for logarithmic type result).

\vskip .1in
Now we would like to review some very related works about the system \eqref{RG2DMHD} with $\gamma=1$, which is the so-called classical regularized GMHD equations (so-called Leray-alpha model). This system was obtained from variational principles by
modifying the Hamiltonian associated with the standard MHD equations
(see \cite{HO}).
Linshiz-Titi \cite{LinshizT} introduced the system \eqref{RG2DMHD} with $\alpha=\beta=\gamma=1$ and proved that the the corresponding system admits a unique global smooth solution both in 2D case and 3D case. Later, Fan and Ozawa \cite{FO} established the global existence of smooth solution for the classical 2D regularized GMHD equations with $\alpha=0,\,\beta=1$ or $\alpha=1,\,\beta=0$, which was further extended logarithmically by \cite{KCYakijmaa}.
Recently, Zhao and Zhu \cite{ZZhu} proved the global regularity result for another special case $\alpha=\beta=\frac{1}{2}$ in 2D case. Very recently, Yamazaki \cite{Yamazaki16} further examined the general case, namely, $\alpha+\beta=1$ with $\alpha,\,\beta\in(0,\,1)$ and obtained the global regularity for this general case. The author \cite{YeXuaml} considered the 3D case and proved the global regularity under the assumption $\alpha+\beta\geq\frac{3}{2}$ and $\frac{3}{4}\leq\alpha<\frac{3}{2}$. For the endpoint case $\alpha=\frac{3}{2},\,\beta=0$, we refer to \cite{xuyecpaa,Yamaz16jmaa}.

\vskip .1in
The aim of this paper is to gain further understanding of the global regularity problem for the 2D regularized system (\ref{RG2DMHD}) with only velocity dissipation or with only magnetic diffusion. We present two main results in
hope that they may shed light on the eventual resolution of the global regularity problem of the classical 2D incompressible MHD equations with only velocity dissipation or with only magnetic diffusion, namely \eqref{RG2DMHD} with $\alpha=1,\,\beta=\gamma=0$ or $\beta=1,\,\alpha=\gamma=0$ .
More precisely, the first result concerns (\ref{RG2DMHD}) with no velocity dissipation, which can be stated as follows.

\vskip .1in
\begin{thm}\label{Th1}
Consider (\ref{RG2DMHD}) with $\alpha=0$, namely,
\begin{equation}\label{eqrtyett01}
\left\{\aligned
&\partial_{t}v+(u\cdot\nabla)v+\sum_{j=1}^{2}v_{j}\nabla u_{j}+\nabla \Big(p+\frac{1}{2} |b|^{2}\Big)=(b\cdot\nabla) b, \\
&\partial_{t}b+(u\cdot\nabla)b+(-\Delta)^{\beta} b=(b\cdot\nabla)u,\\
&v=u+(-\Delta)^{\gamma}u,\\
&\nabla\cdot u=\nabla\cdot v=\nabla\cdot b=0,\\
&v(x,0)=v_{0}(x),\ \ b(x,0)=b_{0}(x),\quad x\in \mathbb{R}^{2}.
\endaligned \right.
\end{equation}
Assume that $v_{0}\in H^{\varrho}(\mathbb{R}^{2}),\,b_{0}\in H^{\rho+1-\beta}(\mathbb{R}^{2})$ with $\rho>\max\{2,\,1+\beta\}$ and $\nabla\cdot v_{0}=\nabla\cdot b_{0}=0$. If $\beta$ and $\gamma$ satisfy
\begin{align}\label{condyet11}
\beta>1-\frac{\gamma}{2}\ \  \mbox{\rm with}\  \gamma\in[0,\,2],
\end{align}
then the system (\ref{eqrtyett01}) admits a unique global regular solution $(v,\,b)$ such that for any given $T>0$,
$$v\in L^{\infty}([0, T]; H^{\rho}(\mathbb{R}^{2})),\quad  b \in L^{\infty}([0, T]; H^{\rho+1-\beta}(\mathbb{R}^{2}))\cap L^{2}([0, T]; H^{\rho+1}(\mathbb{R}^{2})).
$$
\end{thm}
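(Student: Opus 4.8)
The plan is to reduce global existence to a single time-integrable a priori bound, namely $\int_0^T\|\nabla u\|_{L^\infty}\,d\tau<\infty$, and then to propagate the full $H^\rho$/$H^{\rho+1-\beta}$ regularity. First I would record the standard local theory: via Friedrichs mollification and energy estimates, (\ref{eqrtyett01}) has a unique local solution in the asserted class on a maximal interval $[0,T^*)$, and a Beale--Kato--Majda-type continuation criterion shows that the solution persists as long as $\int_0^T\|\nabla u\|_{L^\infty}\,d\tau$ stays finite (the magnetic field being tamed by its own diffusion). Next comes the basic energy law. Pairing the $v$-equation with $u$ and the $b$-equation with $b$ and using $\nabla\cdot u=\nabla\cdot b=0$, the transport and Euler--Poincar\'e terms cancel, $\int[(u\cdot\nabla)v+\sum_j v_j\nabla u_j]\cdot u\,dx=0$, and the two Lorentz terms cancel against each other; since $\int v\cdot u\,dx=\|u\|_{L^2}^2+\|\Lambda^\gamma u\|_{L^2}^2$ this gives
\[
\frac{1}{2}\frac{d}{dt}\Big(\|u\|_{L^2}^2+\|\Lambda^\gamma u\|_{L^2}^2+\|b\|_{L^2}^2\Big)+\|\Lambda^\beta b\|_{L^2}^2=0,
\]
so that $u\in L^\infty_t(L^2\cap \dot H^\gamma)$ and $b\in L^\infty_tL^2\cap L^2_t\dot H^\beta$ globally.

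The heart of the matter, and the step I expect to be the main obstacle, is converting these weak bounds into control of $\int_0^T\|\nabla u\|_{L^\infty}\,d\tau$ with no dissipation in the velocity equation. Here I would exploit the two-dimensional potential-vorticity structure: writing $q:=\nabla\times v=(1+\Lambda^{2\gamma})\,\omega$ with $\omega:=\nabla\times u$ and $J:=\nabla\times b$, the Lie-derivative form of the Euler--Poincar\'e nonlinearity collapses the curl of the $v$-equation to the forced transport identity
\[
\partial_t q+(u\cdot\nabla)q=(b\cdot\nabla)J .
\]
Because $\omega=(1+\Lambda^{2\gamma})^{-1}q$ is $2\gamma$ derivatives smoother than $q$ and $\nabla u$ is a Calder\'on--Zygmund transform of $\omega$, any $L^p$ (or logarithmic $L^\infty$) bound on $q$ yields control of $\|\nabla u\|_{L^\infty}$. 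The growth of $q$ is driven only by the magnetic source $(b\cdot\nabla)J$, which I would estimate through the magnetic diffusion: curling the $b$-equation produces $\partial_t J+(u\cdot\nabla)J+\Lambda^{2\beta}J=(b\cdot\nabla)\omega+\mathcal{R}$, with remainder $\mathcal{R}$ quadratic in $\nabla u,\nabla b$, so that $\Lambda^{2\beta}$ supplies exactly the smoothing of $J$ needed to integrate the source in time. Running these two estimates as a coupled bootstrap and balancing the $2\gamma$ derivatives gained in $q\mapsto\omega$ against the $2\beta$ space-time derivatives furnished by the diffusion, the loop closes precisely when $2\beta+\gamma>2$, i.e. under (\ref{condyet11}); the absence of any velocity damping is what makes this balance sharp and delicate.

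With $\int_0^T\|\nabla u\|_{L^\infty}\,d\tau$ in hand, the top-order estimate is comparatively routine. I would apply $\Lambda^{s}$ to the $v$-equation and pair with $\Lambda^{s}u$ rather than $\Lambda^s v$, so that the Euler--Poincar\'e nonlinearity keeps its cancellation at leading order and only commutators survive; the coercive quantity $\|\Lambda^{s}u\|_{L^2}^2+\|\Lambda^{s+\gamma}u\|_{L^2}^2$ is, for $s=\rho+\gamma$, equivalent to $\|v\|_{H^\rho}^2$. In parallel I would apply $\Lambda^{\rho+1-\beta}$ to the $b$-equation and pair with $\Lambda^{\rho+1-\beta}b$, the dissipation producing $\|b\|_{H^{\rho+1}}^2$ (hence the asserted $L^2_tH^{\rho+1}$ regularity). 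The transport commutators are estimated by Kato--Ponce/Kenig--Ponce--Vega inequalities and generate the factor $\|\nabla u\|_{L^\infty}$; the Lorentz coupling is handled by fractional Leibniz rules after shifting the surplus derivatives onto the smoother factor $u$ (via $\|u\|_{H^{r}}\sim\|v\|_{H^{r-2\gamma}}$), the remaining top derivatives on $b$ being absorbed into $\|b\|_{H^{\rho+1}}^2$ by Young's inequality. A logarithmic interpolation inequality of Beale--Kato--Majda type then turns the resulting differential inequality into a Gronwall inequality with $L^1_t$ coefficient, giving $v\in L^\infty_tH^\rho$ and $b\in L^\infty_tH^{\rho+1-\beta}\cap L^2_tH^{\rho+1}$. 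Uniqueness follows from an $L^2$ energy estimate on the difference of two solutions with the same data, closed by the same integrable bounds.
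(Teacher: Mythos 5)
Your skeleton matches the paper's: the energy identity with the two cancellations, the Euler--Poincar\'e vorticity identity $\partial_t q+(u\cdot\nabla)q=(b\cdot\nabla)J$ for $q=\nabla^{\perp}\cdot v$ (equivalently $\nabla^{\perp}\nabla\cdot(b\otimes b)$ on the right), the observation that $\nabla u$ is $2\gamma$ derivatives smoother than $q$, and a final commutator/Gronwall propagation. The gap is that the entire analytic content of the theorem is compressed into the sentence ``running these two estimates as a coupled bootstrap \dots the loop closes precisely when $2\beta+\gamma>2$.'' That loop \emph{is} the proof, and it is asserted rather than run. Worse, the heuristic you offer does not produce the stated threshold: gaining $2\gamma$ derivatives from the regularization and $2\beta$ from the diffusion against the two derivatives of $b$ in the source would suggest $\beta+\gamma>1$, which is strictly weaker than $\beta>1-\gamma/2$. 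So the derivative count neither justifies nor even predicts the condition, and nothing in the proposal explains where $2\beta+\gamma>2$ actually bites.

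In the paper the closure requires three separate non-trivial stages. First, using only $u\in L^{\infty}_tH^{\gamma}$ from the energy identity, one proves $b\in L^{\infty}_t\dot H^{\beta+\gamma-1}\cap L^{2}_t\dot H^{2\beta+\gamma-1}$; the condition $2\beta+\gamma>2$ enters exactly here, both to make the interpolation exponent $p_0$ in the product estimate for $\Lambda^{\gamma}(ub)$ admissible ($\tfrac{1-\gamma}{2}<\tfrac1{p_0}<\tfrac{2\beta-1}{2}$ requires $2\beta+\gamma>2$) and to conclude $b\in L^{2}_tL^{\infty}$ by Sobolev embedding. Second, with $b\in L^{2}_tL^{\infty}$ a coupled Gronwall gives $v\in L^{\infty}_tH^{1}$ and $b\in L^{\infty}_t\dot H^{2\gamma+\beta}\cap L^{2}_t\dot H^{2\gamma+2\beta}$. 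Third, the $L^{\infty}$ bound on $q$ needs $\nabla^{2}b\in L^{1}_tL^{\infty}$; for $\gamma\in(0,1]$ this comes from a maximal-regularity estimate $\|\Lambda^{2\beta+2\gamma-\epsilon}b\|_{L^{1}_TL^{q}}\lesssim 1+\|q\|_{L^{1}_TL^{q}}$ for the fractional heat equation, closed against the transport bound $\|q\|_{L^{\infty}_TL^{q}}\lesssim 1+\|\Lambda^{2\beta+2\gamma-\epsilon}b\|_{L^{1}_TL^{q}}^{2/(2\beta+2\gamma-\epsilon)}$ by absorption (the sublinear power is what makes the two inequalities compatible), while for $\gamma\in(1,2]$ a separate and different argument is needed since $\beta$ may then drop to or below $\tfrac12$ and the exponent restrictions above fail. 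None of this is visible in, or recoverable from, your sketch, so the proposal as written does not constitute a proof of the a priori bound on $\int_0^T\|\nabla u\|_{L^{\infty}}\,d\tau$, which you yourself identify as the heart of the matter. (A secondary caveat: your continuation criterion and top-order Gronwall also need $\nabla b\in L^{1}_tL^{\infty}$ or $L^2_tL^\infty$, not just $\nabla u$; this follows from the second stage above but must be stated and used.)
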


\begin{rem}  \rm
As stated above, Fan and Ozawa \cite{FO} established the global regularity for the system (\ref{RG2DMHD}) with $\alpha=0,\,\beta=\gamma=1$, which was further extended logarithmically by \cite{KCYakijmaa}. We remark that under the assumption $\alpha=0,\,\gamma=1$, the requirement $\beta>\frac{1}{2}$ actually would ensure the global regularity. Consequently, Theorem \ref{Th1} significantly improves the previous works \cite{FO,KCYakijmaa,Yamazaki16,ZZhu} and answers an interesting problem remarked in \cite[Remark 1.2]{Yamazaki16}.
\end{rem}

\vskip .1in
The following theorem concerns (\ref{RG2DMHD}) with no magnetic diffusion.

\begin{thm}\label{Th2}
Consider (\ref{RG2DMHD}) with $\beta=0$ and the logarithmic dissipation in velocity equation, namely,
\begin{equation}\label{logRG2DMHD}
\left\{\aligned
&\partial_{t}v+(u\cdot\nabla)v+(-\Delta)^{\alpha}\mathcal{L}^{2} v+\sum_{j=1}^{2}v_{j}\nabla u_{j}+\nabla \Big(p+\frac{1}{2} |b|^{2}\Big)=(b\cdot\nabla) b, \\
&\partial_{t}b+(u\cdot\nabla)b=(b\cdot\nabla)u,\\
&v=u+(-\Delta)^{\gamma}\mathcal{L}^{2}u,\\
&\nabla\cdot u=\nabla\cdot v=\nabla\cdot b=0,\\
&v(x,0)=v_{0}(x),\ \ b(x,0)=b_{0}(x),\quad x\in \mathbb{R}^{2},
\endaligned \right.
\end{equation}
where the operator $\mathcal{L}$ is defined by
$$\widehat{\mathcal{L}v}(\xi)=\frac{1}{g(\xi)}\widehat{v}(\xi)$$
for some non-decreasing symmetric function $g(\tau)\geq 1$ defined on $\tau\geq 0$. Let $v_{0}\in H^{\varrho}(\mathbb{R}^{2}),\,b_{0}\in H^{\rho+1-\alpha'}(\mathbb{R}^{2})$ with $\alpha>\alpha'$ and $\rho>\max\{2,\,1+\alpha'\}$ as well as $\nabla\cdot v_{0}=\nabla\cdot b_{0}=0$. If $\alpha+\gamma=2$ with $\alpha\in(0,\,2]$ and $g$ satisfies the following growth condition
\begin{equation}\label{logcobd}
\int_{e}^{\infty}\frac{d\tau}{\tau\sqrt{\ln \tau} g^{2}(\tau)}=\infty,\end{equation}
then the system (\ref{logRG2DMHD}) admits a unique global regular solution $(v,\,b)$ such that for any given $T>0$
$$v\in L^{\infty}([0, T]; H^{\rho}(\mathbb{R}^{2}))\cap L^{2}([0, T]; H^{\rho+\alpha'}(\mathbb{R}^{2})),\quad  b \in L^{\infty}([0, T]; H^{\rho+1-\alpha'}(\mathbb{R}^{2})).
$$
\end{thm}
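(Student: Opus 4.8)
The plan is to run a local-existence-plus-continuation scheme and then close a single global a priori estimate; the one genuine difficulty lies in the magnetic field, which carries no diffusion, so that its Sobolev norm can only be propagated through the velocity gradient. First I would establish local well-posedness of (\ref{logRG2DMHD}) together with a continuation criterion, namely that a local regular solution extends as long as $\int_0^T(\|\nabla u\|_{L^\infty}+\|\nabla b\|_{L^\infty})\,dt<\infty$. This is routine via a Friedrichs (or Galerkin) approximation and energy estimates, using that $v=u+(-\Delta)^{\gamma}\mathcal L^2 u$ recovers $u$ from $v$ with a gain of $2\gamma$ derivatives, so that all quantities appearing in the scheme are well defined.

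The second step is the basic energy estimate. Testing $(\ref{logRG2DMHD})_1$ with $u$ and $(\ref{logRG2DMHD})_2$ with $b$, the transport terms $(u\cdot\nabla)v+\sum_{j}v_{j}\nabla u_{j}$ integrate to zero against $u$ (the defining feature of the Leray-type regularization), while the magnetic coupling terms $(b\cdot\nabla)b$ and $(b\cdot\nabla)u$ cancel after integration by parts using $\nabla\cdot u=\nabla\cdot b=0$. Since $\alpha+\gamma=2$, this yields the identity
\[
\frac{1}{2}\frac{d}{dt}\Big(\|u\|_{L^2}^2+\|\Lambda^{\gamma}\mathcal L u\|_{L^2}^2+\|b\|_{L^2}^2\Big)+\|\Lambda^{\alpha}\mathcal L u\|_{L^2}^2+\|\Lambda^{2}\mathcal L^2 u\|_{L^2}^2=0,
\]
whence $u,b\in L^\infty_t L^2$ and $\int_0^T(\|\Lambda^{\alpha}\mathcal L u\|_{L^2}^2+\|\Lambda^{2}\mathcal L^2 u\|_{L^2}^2)\,dt<\infty$. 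This is precisely where $\alpha+\gamma=2$ is exploited: the effective velocity dissipation reaches the borderline order $\Lambda^{2}\mathcal L^2$, i.e.\ two full derivatives up to the logarithmic loss encoded in $\mathcal L$, which is exactly the threshold for the two-dimensional logarithmic Sobolev control of $\|\nabla u\|_{L^\infty}$.

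For the higher-order estimate I would differentiate both equations to a common order (adjusted afterwards, through the $v$–$u$ relation, to match the stated regularity classes) and test velocity against velocity and magnetic against magnetic, so that the top-order parts of the two coupling terms again cancel by the MHD structure. All surviving contributions are then commutator remainders which, by Kato–Ponce-type estimates, are dominated by $\|\nabla u\|_{L^\infty}$ together with $\|\nabla b\|_{L^\infty}$ and Sobolev norms subordinate to the dissipation; since $\rho>2$ one has $\|\nabla b\|_{L^\infty}\lesssim\|b\|_{H^{\rho+1-\alpha'}}$ by embedding, and the condition (\ref{logcobd})—which forces $g$ to grow more slowly than any positive power of $|\xi|$—guarantees that the velocity norms appearing are controlled by the dissipation $\Lambda^{2}\mathcal L^2 u$. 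Thus the whole estimate reduces to controlling the single borderline quantity $\|\nabla u\|_{L^\infty}$.

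The crux, and the main obstacle, is precisely this quantity: with no magnetic diffusion the regularity of $b$ is driven entirely by $\int_0^T\|\nabla u\|_{L^\infty}\,dt$, yet the velocity dissipation is only logarithmically supercritical because of the factor $\mathcal L^2$. To close it I would prove a logarithmic Sobolev inequality adapted to $\mathcal L$, schematically
\[
\|\nabla u\|_{L^\infty}\lesssim 1+\|\Lambda^{2}\mathcal L^2 u\|_{L^2}\,g\!\big(\Phi\big)\sqrt{\ln\Phi},\qquad \Phi:=e+\|v\|_{H^{\rho}}^2+\|b\|_{H^{\rho+1-\alpha'}}^2,
\]
which converts the higher-order estimate into a nonlinear Grönwall inequality for $\Phi$. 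The powers of $\ln\Phi$ and of $g$ are arranged so that the Osgood-type admissibility condition for this inequality coincides exactly with (\ref{logcobd}); since the dissipation factor is square-integrable in time, the growth condition then prevents $\Phi$ from blowing up on any $[0,T]$. This delivers $v\in L^\infty_t H^{\rho}$ and $b\in L^\infty_t H^{\rho+1-\alpha'}$, and reabsorbing the dissipation upgrades this to the additional bound $v\in L^2_t H^{\rho+\alpha'}$. Global existence then follows from the continuation criterion, while uniqueness is obtained by a standard $L^2$ energy estimate on the difference of two solutions.
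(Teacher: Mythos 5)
Your overall architecture (energy estimate, frequency-localized logarithmic Sobolev inequality for $\|\nabla u\|_{L^{\infty}}$, Osgood-type Gr\"onwall tied to \eqref{logcobd}) identifies the right difficulty and the right tool, but the proposed single-step closure at the top order does not work. The crux is your claim that after the commutator estimates ``the whole estimate reduces to controlling the single borderline quantity $\|\nabla u\|_{L^{\infty}}$.'' It does not: the top-order inequality for $\Phi:=e+\|v\|_{H^{\rho}}^{2}+\|b\|_{H^{\rho+1-\alpha'}}^{2}$ unavoidably carries prefactors $\|\nabla b\|_{L^{\infty}}$, $\|b\|_{L^{\infty}}$ and $\|\omega\|_{L^{2}}$ multiplying $\Phi$ (the magnetic equation has \emph{no} diffusion, so none of these comes from a dissipative term). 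If, as you propose, one bounds $\|\nabla b\|_{L^{\infty}}\lesssim\|b\|_{H^{\rho+1-\alpha'}}\leq\Phi^{1/2}$ by embedding, the differential inequality acquires a term of size $\Phi^{3/2}$ (and $\|b\|_{L^{\infty}}^{2}\Phi\sim\Phi^{2}$ elsewhere). Power-type superlinearity is fatal for an Osgood argument: the condition \eqref{logcobd} only tolerates factors like $g^{2}(\Phi^{c})\sqrt{\ln\Phi}$, and $\Phi'\gtrsim\Phi^{3/2}$ blows up in finite time no matter how slowly $g$ grows. So the scheme as written does not close.

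The paper avoids this by running the Osgood argument once, at a carefully chosen \emph{low} order: Lemma \ref{RMHDS32} works with $V(t)=\|b\|_{L^{\infty}}^{3}+\|b\|_{L^{\infty}}^{6}+\|\nabla b\|_{L^{2}}^{3}+\|v\|_{L^{2}}^{2}$ (the exponents $3$ and $6$ are chosen precisely so that all cross terms close linearly in $V$), together with the dissipation functional $H(t)=\|\mathcal{L}\Lambda^{\alpha}v\|_{L^{2}}^{2}$ kept on the left-hand side so that the high-frequency part of $\|\nabla u\|_{L^{\infty}}$, which is bounded by $2^{N(2-r_{1})}H^{1/2}(t)V(t)$ via $\|\Lambda^{r_{1}}u\|_{L^{2}}\lesssim\|u\|_{L^{2}}+H^{1/2}(t)$ for $2<r_{1}<2+\gamma$, can be absorbed. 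Only after $\sup_{t}V(t)$ and $\int_{0}^{T}H\,d\tau$ are known does the paper bootstrap through $\|\Lambda^{1+\nu}b\|_{L^{2}}$ (Lemma \ref{zxdDSma}), $\|\Lambda^{\alpha}v\|_{L^{2}}$ and $\|\nabla b\|_{L^{\infty}}$ (Lemma \ref{RMHDS33}), and $\|\omega\|_{L^{2}}$ (Lemma \ref{RMHDS34}); the final $H^{\rho}$ estimate is then a genuinely \emph{linear} Gr\"onwall with $L^{1}_{t}$ coefficients. Two smaller remarks: your schematic inequality carries $g(\Phi)$ where the dissipation $\Lambda^{2}\mathcal{L}^{2}u=\Lambda^{2}u/g^{2}(\Lambda)$ forces a factor $g^{2}$, which is exactly why \eqref{logcobd} involves $g^{2}$ (the single-$g$ condition \eqref{endlogcobd} belongs to the endpoint Theorem \ref{addTh2}); and your energy identity itself is correct and matches Lemma \ref{RMHDS31}.
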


\begin{rem}  \rm
We point out that the global regularity of the system (\ref{RG2DMHD}) with $\beta=0,\,\alpha=\gamma=1$ was proved by Fan and Ozawa \cite{FO}, which was further extended logarithmically by \cite[Theorem 1.1]{KCYakijmaa}.
Consequently, Theorem \ref{Th2} improves the results of \cite[Theorem 1.1]{FO} and \cite[Theorem 1.1]{KCYakijmaa}.
\end{rem}

\begin{rem}\rm
We should remark that the typical examples satisfying the condition (\ref{logcobd}) are as follows
\begin{equation}
\begin{split}
& g(\xi)=\big[\ln(e+|\xi|)\big]^{\frac{1}{4}}; \\
& g(\xi)=\big[\ln(e+|\xi|)\big]^{\frac{1}{4}}\big[\ln (e+\ln(e+|\xi|)) \big]^{\frac{1}{2}};\\
& g(\xi)=\big[\ln(e+|\xi|)\big]^{\frac{1}{4}}\big[\ln (e+\ln(e+|\xi|)) \ln(e+\ln(e+\ln(e+|\xi|)))\big]^{\frac{1}{2}}.\nonumber
\end{split}
\end{equation}
\end{rem}

\vskip .1in
The next main result concerned with the endpoint case, namely, (\ref{RG2DMHD}) with $\alpha=\beta=0$ and $\gamma=2$, can be stated as follows.
\begin{thm}\label{addTh2}
Consider (\ref{RG2DMHD}) with $\alpha=\beta=0$ and $\gamma=2$, namely
\begin{equation}\label{endlogRG2DMHD}
\left\{\aligned
&\partial_{t}v+(u\cdot\nabla)v+\sum_{j=1}^{2}v_{j}\nabla u_{j}+\nabla \Big(p+\frac{1}{2} |b|^{2}\Big)=(b\cdot\nabla) b, \\
&\partial_{t}b+(u\cdot\nabla)b=(b\cdot\nabla)u,\\
&v=u+(-\Delta)^{2}\mathcal{L}^{2}u,\\
&\nabla\cdot u=\nabla\cdot v=\nabla\cdot b=0,\\
&v(x,0)=v_{0}(x),\ \ b(x,0)=b_{0}(x),\quad x\in \mathbb{R}^{2},
\endaligned \right.
\end{equation}
where the operator $\mathcal{L}$ is defined by
$$\widehat{\mathcal{L}v}(\xi)=\frac{1}{g(\xi)}\widehat{v}(\xi)$$
for some non-decreasing symmetric function $g(\tau)\geq 1$ defined on $\tau\geq 0$. Assume that $v_{0}\in H^{\varrho}(\mathbb{R}^{2}),\,b_{0}\in H^{\rho+1}(\mathbb{R}^{2})$ with $\rho>2$ and $\nabla\cdot v_{0}=\nabla\cdot b_{0}=0$. If $g$ satisfies the following growth condition
\begin{equation}\label{endlogcobd}
\int_{e}^{\infty}\frac{d\tau}{\tau\sqrt{\ln \tau} g(\tau)}=\infty,\end{equation}
then the corresponding system (\ref{endlogRG2DMHD}) admits a unique global regular solution $(v,\,b)$ such that for any given $T>0$,
$$v\in L^{\infty}([0, T]; H^{\rho}(\mathbb{R}^{2})),\quad  b \in L^{\infty}([0, T]; H^{\rho+1}(\mathbb{R}^{2})).
$$
\end{thm}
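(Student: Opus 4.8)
The plan is to prove global regularity by closing a single logarithmic \emph{a priori} estimate of Osgood type, for which the growth condition \eqref{endlogcobd} is exactly the borderline threshold. Local existence and uniqueness of a regular solution in the stated spaces follow from a standard mollification/energy argument (the regularization $v=u+(-\Delta)^{2}\mathcal{L}^{2}u$ makes $u$ four derivatives smoother than $v$, so the nonlinearities are benign at the local level), together with a continuation criterion. Hence everything reduces to proving that $Y(t):=\|v(t)\|_{H^{\rho}}^{2}+\|b(t)\|_{H^{\rho+1}}^{2}$ stays finite on every $[0,T]$.

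First I would establish the basic energy law. Taking the $L^{2}$ inner product of $\eqref{endlogRG2DMHD}_{1}$ with $u$ and of $\eqref{endlogRG2DMHD}_{2}$ with $b$, the pressure and transport terms drop by $\nabla\cdot u=\nabla\cdot b=0$; the velocity self-interaction $(u\cdot\nabla)v+\sum_{j}v_{j}\nabla u_{j}$ tested against $u$ cancels exactly (the defining algebraic feature of the $v$--$u$ model), while $(b\cdot\nabla)b$ tested against $u$ cancels $(b\cdot\nabla)u$ tested against $b$. Using $v=u+\Lambda^{4}\mathcal{L}^{2}u$ and the self-adjointness of the multiplier $\Lambda^{4}\mathcal{L}^{2}$, this yields conservation of $\mathcal{E}(t):=\|u\|_{L^{2}}^{2}+\|\Lambda^{2}\mathcal{L}u\|_{L^{2}}^{2}+\|b\|_{L^{2}}^{2}$, so $u\in L^{\infty}_{t}(L^{2}\cap\dot H^{2}_{\mathcal{L}})$ and $b\in L^{\infty}_{t}L^{2}$ with data-fixed norms. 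I would also record the genuinely two-dimensional structure: writing $b=\nabla^{\perp}\psi$, equation $\eqref{endlogRG2DMHD}_{2}$ becomes the pure transport $\partial_{t}\psi+u\cdot\nabla\psi=0$, so $\|\psi\|_{L^{p}}$ are conserved and, with $\|\nabla\psi\|_{L^{2}}=\|b\|_{L^{2}}$, this furnishes time-uniform base norms for the magnetic field.

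Next comes the high-order estimate, performed on $\|v\|_{H^{\rho}}$ via $\eqref{endlogRG2DMHD}_{1}$ and on $\|b\|_{H^{\rho+1}}$ (equivalently $\|\psi\|_{H^{\rho+2}}$) via $\eqref{endlogRG2DMHD}_{2}$. Two structural facts are decisive. First, $v=u+\Lambda^{4}\mathcal{L}^{2}u$ gives $\|\Lambda^{\rho}v\|_{L^{2}}\ge\|\Lambda^{\rho+4}\mathcal{L}^{2}u\|_{L^{2}}$, and since \eqref{endlogcobd} forces $g$ to be sub-polynomial the operators $\Lambda^{-s}g^{2}(\Lambda)$ are bounded for $s>0$; hence $\|\Lambda^{\rho+2}u\|_{L^{2}}\lesssim\|v\|_{H^{\rho}}+\|u\|_{L^{2}}$, so the super-smooth $u$ never costs top-order derivatives and every extra derivative can be transferred onto it. Second, the top-order part of the magnetic coupling cancels by incompressibility of $b$ (the standard identity $\int(b\cdot\nabla)(\text{top}_{u}\cdot\text{top}_{b})=0$), and working with the transported potential $\psi$ ensures that the only magnetic $L^{\infty}$ norm that survives is $\|b\|_{L^{\infty}}=\|\nabla\psi\|_{L^{\infty}}$, whose Brezis--Gallouet prefactor $\|\nabla\psi\|_{L^{2}}=\|b\|_{L^{2}}$ is energy-bounded — rather than the uncontrolled $\|\nabla b\|_{L^{\infty}}$. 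After treating the transport terms by the Kato--Ponce commutator estimate and the remaining products by the fractional Leibniz rule, all contributions collapse to $C\big(\|\nabla u\|_{L^{\infty}}+\|b\|_{L^{\infty}}\big)\big(\|v\|_{H^{\rho}}^{2}+\|b\|_{H^{\rho+1}}^{2}\big)$ plus terms absorbable by $\mathcal{E}$.

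Finally I would close logarithmically. Splitting $\widehat{\nabla u}$ at frequency $N$ and estimating low frequencies through the energy bound and high frequencies through $\|\Lambda^{\rho+1}u\|_{L^{2}}$,
\[
\|\nabla u\|_{L^{\infty}}\lesssim \Big(\int_{|\xi|\le N}\frac{g(\xi)^{2}}{|\xi|^{2}}\,d\xi\Big)^{1/2}\|\Lambda^{2}\mathcal{L}u\|_{L^{2}}+N^{1-\rho}\|\Lambda^{\rho+1}u\|_{L^{2}},
\]
the first integral being $\sim g(N)\sqrt{\ln N}$; optimizing $N\sim\|v\|_{H^{\rho}}^{1/(\rho-1)}$ and using the slow variation of $g$ gives $\|\nabla u\|_{L^{\infty}}\lesssim 1+g(\|v\|_{H^{\rho}})\sqrt{\ln(e+\|v\|_{H^{\rho}})}$, while $\|b\|_{L^{\infty}}\lesssim 1+\sqrt{\ln(e+\|b\|_{H^{\rho+1}})}$ by Brezis--Gallouet with the energy prefactor. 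Inserting these yields
\[
\frac{d}{dt}Y(t)\le C\,Y(t)\sqrt{\ln(e+Y(t))}\;g\!\big(\sqrt{Y(t)}\big),
\]
and the substitution $\tau=\sqrt{Y}$ turns global solvability into the divergence of $\int^{\infty}\frac{d\tau}{\tau\sqrt{\ln\tau}\,g(\tau)}$, i.e.\ exactly \eqref{endlogcobd}; uniqueness follows from an $L^{2}$-type estimate on the difference of two solutions. The hard part is precisely this closure: because neither equation carries any dissipation, there is no time-integrated smoothing whatsoever, so $\|\nabla u\|_{L^{\infty}}$ is only logarithmically controlled and the entire argument rests on this one borderline inequality. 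The magnetic coupling, which enjoys no regularization of its own, can be tamed only through the two-dimensional cancellation/magnetic-potential structure, and the sharp first power of $g$ extracted from $\Lambda^{2}\mathcal{L}u\in L^{2}$ must match \eqref{endlogcobd} on the nose.
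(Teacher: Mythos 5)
Your overall strategy --- conservation of $\|u\|_{L^{2}}^{2}+\|\Lambda^{2}\mathcal{L}u\|_{L^{2}}^{2}+\|b\|_{L^{2}}^{2}$, a high--low frequency splitting of $\nabla u$ that extracts exactly one power of $g$ times $\sqrt{\ln N}$ from the time-uniform bound on $\|\Lambda^{2}\mathcal{L}u\|_{L^{2}}$, and an Osgood closure matching \eqref{endlogcobd} --- is the same as the paper's. But there are two genuine gaps in the execution. First, your bound $\|b\|_{L^{\infty}}\lesssim 1+\sqrt{\ln(e+\|b\|_{H^{\rho+1}})}$ ``by Brezis--Gallouet with the energy prefactor'' is not available: the Brezis--Gallouet prefactor for $\|b\|_{L^{\infty}}=\|\nabla\psi\|_{L^{\infty}}$ is $\|\nabla\psi\|_{H^{1}}\approx\|b\|_{L^{2}}+\|\nabla b\|_{L^{2}}$, and $\|\nabla b\|_{L^{2}}$ is \emph{not} controlled by the energy (only $\|\psi\|_{L^{p}}$ and $\|b\|_{L^{2}}$ are conserved; $\|\nabla\psi\|_{L^{\infty}}$ is not a transported quantity). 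Second, and consequently, the claim that the top-order estimate collapses to $C(\|\nabla u\|_{L^{\infty}}+\|b\|_{L^{\infty}})Y$ ``plus terms absorbable by $\mathcal{E}$'' is too optimistic: the coefficients $\|b\|_{L^{\infty}}$, $\|\nabla b\|_{L^{2}}$ and $\|v\|_{L^{2}}$ (the latter entering, e.g., through $\int\Lambda^{\rho}(v_{j}\nabla u_{j})\cdot\Lambda^{\rho}v$) are not a priori bounded, and if you estimate them by $\sqrt{Y}$ you produce terms of order $Y^{3/2}$, which destroys the Osgood structure. A single differential inequality for $Y=\|v\|_{H^{\rho}}^{2}+\|b\|_{H^{\rho+1}}^{2}$ therefore does not close as written.

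The paper resolves exactly this by a two-stage bootstrap. It first spends the logarithmic condition \eqref{endlogcobd} at the \emph{low} level: using the transport estimates $\frac{d}{dt}\|b\|_{L^{\infty}}\le\|\nabla u\|_{L^{\infty}}\|b\|_{L^{\infty}}$ and $\frac{d}{dt}\|\nabla b\|_{L^{2}}^{2}\lesssim(\|\nabla u\|_{L^{\infty}}+\|\Delta u\|_{L^{2}})(\|\nabla b\|_{L^{2}}^{2}+\|b\|_{L^{\infty}}^{2})$ (no Brezis--Gallouet for $b$), it bundles $V(t)=\|b\|_{L^{\infty}}^{3}+\|b\|_{L^{\infty}}^{6}+\|\nabla b\|_{L^{2}}^{3}+\|v\|_{L^{2}}^{2}$ --- the powers chosen so every cross term Young-absorbs into $V$ --- and obtains \eqref{yelem620t0324}, whose coefficient $1+\|\nabla u\|_{L^{\infty}}+\|\Delta u\|_{L^{2}}$ is then handled by the frequency splitting with $2^{N}\approx(e+V)^{1/(2(r_{1}-2))}$, the high-frequency tail costing only $\|v\|_{L^{2}}\le V^{1/2}$. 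Once $V$ is bounded, $\|\Lambda^{4}\mathcal{L}^{2}u\|_{L^{2}}\le\|u\|_{L^{2}}+\|v\|_{L^{2}}$ gives pointwise-in-time control of $\|\nabla u\|_{L^{\infty}}$ and $\|\nabla^{2}u\|_{L^{\infty}}$, after which $\|\nabla b\|_{L^{\infty}}$, $\|\Lambda^{2+\theta}b\|_{L^{2}}$, $\|\omega\|_{L^{q}}$ and finally the $H^{\rho}$/$H^{\rho+1}$ norms follow by plain Gronwall. To repair your argument you would need to insert this intermediate stage (or an equivalent device) before attempting the top-order estimate.
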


\begin{rem}\rm
We also remark that the typical examples satisfying the condition (\ref{endlogcobd}) are
\begin{equation}
\begin{split}
& g(\xi)=\big[\ln(e+|\xi|)\big]^{\frac{1}{2}}; \\
& g(\xi)=\big[\ln(e+|\xi|)\big]^{\frac{1}{2}} \ln (e+\ln(e+|\xi|));\\
& g(\xi)=\big[\ln(e+|\xi|)\big]^{\frac{1}{2}} \ln (e+\ln(e+|\xi|)) \ln(e+\ln(e+\ln(e+|\xi|))).\nonumber
\end{split}
\end{equation}
\end{rem}

\begin{rem}  \rm
The requirements $\rho>\max\{2,\,1+\beta\}$ in Theorem \ref{Th1} and $\rho>\max\{2,\,1+\alpha\}$ in Theorem \ref{Th2} guarantee the initial data belonging to the lipschitzian class. As a matter of fact, this requirements can be weakened. In order to avoid the tedious computations, we just work on these functional spaces.
\end{rem}

\begin{rem}  \rm
Using the techniques and arguments of this paper, it is not hard to derive the global regularity of the system (\ref{RG2DMHD}) with $\alpha+\beta+\gamma=2$. We would like to leave it to the interested readers. However, it seems to be an interesting problem whether or not the global regularity result of the system (\ref{RG2DMHD}) with $\alpha+\beta+\gamma<2$ and $\beta\leq 1-\frac{\gamma}{2}$.
\end{rem}

Finally, inspired by the proof of Theorem \ref{Th1}, we are able to show the global regularity for the following 2D regularized MHD equations \cite{LinshizT}
\begin{equation}\label{addRG2DMHD}
\left\{\aligned
&\partial_{t}v+(u\cdot\nabla)v+\nabla \Big(p+\frac{1}{2} |b|^{2}\Big)=(b\cdot\nabla) b, \\
&\partial_{t}b+(u\cdot\nabla)b+(-\Delta)^{\beta} b=(b\cdot\nabla)u,\\
&v=u-\Delta u,\\
&\nabla\cdot u=\nabla\cdot v=\nabla\cdot b=0,\\
&v(x,0)=v_{0}(x),\ \ b(x,0)=b_{0}(x),\quad x\in \mathbb{R}^{2}.
\endaligned \right.
\end{equation}
More precisely, we have the following result.
\begin{thm}\label{Th3}
Assume that $v_{0}\in H^{\varrho}(\mathbb{R}^{2}),\,b_{0}\in H^{\rho+1-\beta}(\mathbb{R}^{2})$ with $\rho>\max\{2,\,1+\beta\}$ and $\nabla\cdot v_{0}=\nabla\cdot b_{0}=0$. If $\beta>\frac{1}{2}$, then the system \eqref{addRG2DMHD} admits a unique global regular solution $(v,\,b)$ such that for any given $T>0$,
$$v\in L^{\infty}([0, T]; H^{\rho}(\mathbb{R}^{2})),\quad  b \in L^{\infty}([0, T]; H^{\rho+1-\beta}(\mathbb{R}^{2}))\cap L^{2}([0, T]; H^{\rho+1}(\mathbb{R}^{2})).
$$
\end{thm}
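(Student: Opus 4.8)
The plan is to reduce the theorem to global-in-time a priori estimates: local existence and uniqueness of a solution in the stated class is standard (regularize the system, derive uniform bounds, pass to the limit), so it suffices to show that $\|v(t)\|_{H^\rho}$ and $\|b(t)\|_{H^{\rho+1-\beta}}$ stay finite on every $[0,T]$. Since \eqref{addRG2DMHD} is exactly \eqref{RG2DMHD} with $\alpha=0$ and $\gamma=1$, and the hypothesis $\beta>\frac12$ is precisely \eqref{condyet11} for $\gamma=1$, the argument runs parallel to the proof of Theorem \ref{Th1}; the only structural novelty is that the momentum equation carries the plain advection $(u\cdot\nabla)v$ instead of the Camassa--Holm nonlinearity, so I must first verify that the basic cancellations survive.

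First I would establish the energy law. Testing the momentum equation against $u$ and the induction equation against $b$, the pressure and the term $\langle(u\cdot\nabla)b,b\rangle$ vanish by incompressibility, and the Lorentz force cancels the magnetic stretching via $\langle(b\cdot\nabla)b,u\rangle=-\langle(b\cdot\nabla)u,b\rangle$. The only questionable contribution is $\langle(u\cdot\nabla)v,u\rangle=\int(u\cdot\nabla)u\cdot\Delta u\,dx=-\int\partial_\ell u_k\,\partial_k u_i\,\partial_\ell u_i\,dx$, which, absent any velocity dissipation, would be a genuine obstruction in three dimensions. In two dimensions, however, writing $M=\nabla u$ (so $\operatorname{tr}M=0$ by $\nabla\cdot u=0$), this integrand equals $\operatorname{tr}(M^{2}M^{\mathsf T})$, and Cayley--Hamilton gives $M^{2}=-(\det M)I$, whence $\operatorname{tr}(M^{2}M^{\mathsf T})=-(\det M)\operatorname{tr}M=0$ pointwise. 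Therefore
\begin{equation}
\frac12\frac{d}{dt}\big(\|u\|_{L^2}^2+\|\nabla u\|_{L^2}^2+\|b\|_{L^2}^2\big)+\|\Lambda^{\beta}b\|_{L^2}^2=0,\nonumber
\end{equation}
which yields $u\in L^\infty([0,T];H^1)$ and $b\in L^\infty([0,T];L^2)\cap L^2([0,T];H^{\beta})$.

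The heart of the matter, and the step I expect to be hardest, is the first global bound on a subcritical norm, since with $\alpha=0$ there is no direct smoothing of the velocity: all its control must be extracted from the regularization $v=u-\Delta u$, which gains two derivatives ($\|u\|_{H^{s+2}}\approx\|v\|_{H^s}$, in particular $\|\nabla u\|_{H^{s}}\lesssim\|v\|_{L^2}$ for $s\le1$), together with the magnetic diffusion. The plan is a coupled estimate: for $v$ I exploit the transport structure, in which both the advection and the pressure drop out, so that $\frac12\frac{d}{dt}\|v\|_{L^2}^2=\langle(b\cdot\nabla)b,v\rangle$; for $b$ I run an $H^{s}$ estimate with $1-\beta<s\le1$, so that the dissipation term $\|b\|_{\dot H^{s+\beta}}^{2}$ controls both $\|\nabla b\|_{L^2}$ and $\|b\|_{L^\infty}$ (as $s+\beta>1$). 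The magnetic stretching $(b\cdot\nabla)u$ and the Lorentz term are then estimated by interpolation between the energy bounds and $\|b\|_{\dot H^{s+\beta}}$, and the strict inequality $\beta>\frac12$ is exactly what makes the window $1-\beta<s\le1$ nonempty, providing the spare fractional derivative needed to absorb these terms into the dissipation by Young's inequality (at the endpoint $\beta=\frac12$ one would need the logarithmic devices of Theorems \ref{Th2}--\ref{addTh2}). To convert the resulting control of $\|v\|_{L^2}$ into the Lipschitz bound on $u$ that the transport estimate requires, I would invoke a Brezis--Gallouet type inequality $\|\nabla u\|_{L^\infty}\lesssim 1+\|v\|_{L^2}\sqrt{\log(e+\|v\|_{H^\rho})}$; the appearance of $\sqrt{\log}$ here is the very mechanism underlying the growth conditions \eqref{logcobd} and \eqref{endlogcobd}.

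Granted these bounds, the high-order propagation is routine and closes by a logarithmic Gronwall argument. Applying $\Lambda^{\rho+1-\beta}$ to the induction equation and pairing with $\Lambda^{\rho+1-\beta}b$, the commutator and product (Kato--Ponce) estimates reduce every term to the already-controlled quantities $\|\nabla u\|_{L^\infty}$, $\|b\|_{L^\infty}$ together with the energy, so the dissipation closes the estimate and gives $b\in L^\infty([0,T];H^{\rho+1-\beta})\cap L^2([0,T];H^{\rho+1})$. Since the $v$-equation is a transport equation forced by the Lorentz term, its $H^\rho$ estimate reads $\frac{d}{dt}\|v\|_{H^\rho}\lesssim\|\nabla u\|_{L^\infty}\|v\|_{H^\rho}+\|(b\cdot\nabla)b\|_{H^\rho}$, and because $\|(b\cdot\nabla)b\|_{H^\rho}$ is controlled in $L^2_t$ by $b\in L^2([0,T];H^{\rho+1})$ while $\|\nabla u\|_{L^\infty}$ obeys the logarithmic bound above, Gronwall yields $v\in L^\infty([0,T];H^\rho)$. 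Finally, uniqueness follows from a standard $L^2$ estimate on the difference of two solutions, using the Lipschitz control of $u$ and the magnetic dissipation.
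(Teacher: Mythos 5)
Your proposal is correct and follows essentially the strategy of the paper's Appendix A: the same 2D cancellation $\int(u\cdot\nabla u)\cdot\Delta u\,dx=0$ underlies the energy law (the paper cites \cite{Wuxye} for it; your Cayley--Hamilton derivation is a clean, self-contained substitute), the same use of the magnetic dissipation with $\beta>\frac12$ upgrades $b$ to $L^\infty_tH^{\beta}\cap L^2_tH^{2\beta}$ (the paper's Lemma \ref{AppaAl02}, which is your $H^s$ estimate at $s=\beta$, squarely inside your window $1-\beta<s\le1$), the same $L^2$ bound on $v$ from the transport structure gives $u\in L^\infty_tH^2$, and the same Brezis--Gallouet/logarithmic-Gronwall mechanism controls $\|\nabla u\|_{L^\infty}$. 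One small correction: \eqref{addRG2DMHD} is \emph{not} ``exactly \eqref{RG2DMHD} with $\alpha=0,\gamma=1$'' --- it omits the term $\sum_j v_j\nabla u_j$ --- but you immediately identify and handle the actual consequence of this, namely that the cancellation $\int(u\cdot\nabla v)\cdot u\,dx=0$ must be verified by hand. The one genuine organizational difference: the paper inserts an intermediate level (its Lemma \ref{AppaAl03}), working with the vorticity $\omega=\nabla^\perp\cdot v$ --- whose equation carries the extra term $\nabla^\perp u_i\,\partial_i v$ precisely because the Camassa--Holm nonlinearity is absent --- and runs the logarithmic Gronwall there to obtain $v\in L^\infty_tH^1$ and $b\in L^\infty_tH^{2+\beta}\cap L^2_tH^{2+2\beta}$; after that $\|\nabla u\|_{L^\infty}\leq C\|v\|_{H^1}$ is uniformly bounded and the top-order estimate is a plain Gronwall. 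You skip this intermediate level and run the log-Gronwall directly at order $\rho$, with $\|v\|_{H^\rho}$ inside the logarithm; this also closes (the resulting ODE $\dot Y\leq CY\log(e+Y)$ is still integrable), at the cost of a worse, double-exponential final bound and of having to estimate the commutator contributions such as $\|\Lambda^{\rho+1}u\|_{L^4}\|v\|_{L^4}$ with only $\|v\|_{L^2}$ (rather than $\|v\|_{H^1}$) available as a coefficient --- which does work, via $\|\Lambda^{\rho+1}u\|_{L^4}\leq C\|\Lambda^{\rho-1/2}v\|_{L^2}$ and interpolation against $\|v\|_{H^\rho}$, but deserves to be written out explicitly.
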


\begin{rem}  \rm
The global regularity result of the system \eqref{addRG2DMHD} with $\beta=1$ was established in \cite[Theorem 1.2]{Zhoufan}. Very recently, this global regularity result in the logarithmically supercritical regime was extended by KC and Yamazaki in \cite[Theorem 1.1]{KCYamazaki}. Clearly, Theorem \ref{Th3} can be regarded as a significant improvement of \cite[Theorem 1.2]{Zhoufan} and \cite[Theorem 1.1]{KCYamazaki}. As the proof of Theorem \ref{Th3} can be performed via the similar arguments adopted in proving Theorem \ref{Th1} with suitable modifications, we only sketch its proof in Appendix \ref{aaapA}.
\end{rem}

\vskip .2in
Now we give some rough ideas on our proof of Theorem \ref{Th1} and Theorem \ref{Th2}. The proof is not straightforward and demands new techniques. We describe
the main difficulties and explain the techniques to overcome them.
The key to the global regularity is the global {\it a priori} bounds.
We begin with  Theorem \ref{Th1}.

\begin{center}
$$\textbf{Case 1}:  \  \beta>1-\frac{\gamma}{2}\ \  \mbox{\rm with}\  \gamma\in(0,\,1] $$
\end{center}
\vskip .1in

The following basic global $L^2$-energy is immediate due to the special structure of \eqref{eqrtyett01}
\begin{align}\label{exstp1}
\|u(t)\|_{H^{\gamma}}^{2}
+\|b(t)\|_{L^{2}}^{2}+\int_{0}^{t}{\|\Lambda^{\beta}b(\tau)\|_{L^{2}}^{2}\,d\tau}\leq
C(v_{0},b_{0}),
\end{align}
which serves as a preparation for higher regularity estimates. The next step is to derive the global $H^1$-bound, but direct energy estimates do not appear to easily
yield this bound. One of the difficulties comes from the nonlinear term in the velocity equation. Moreover, there is no dissipation in the velocity equation and the dissipation in the magnetic equation is not strong enough (only $\beta\leq1$). To overcome these difficulties, we first make use of the bound \eqref{exstp1} to show the following optimal regularity for $b$
\begin{align}\label{exstp2}
\sup_{t\in [0,\,T]}\|\Lambda^{\beta+\gamma-1}b(t)\|_{L^{2}}^{2}
+\int_{0}^{T}{\|\Lambda^{2\beta+\gamma-1}b(\tau)\|_{L^{2}}^{2}\,d\tau}\leq
C(T,v_{0},b_{0}).
\end{align}
Next, if one tries to obtain more regularity for $b$, then the higher regularity for $u$ or $v$ will be needed. To this end, we appeal to the vorticity $\omega:=\nabla^{\perp}\cdot v\equiv \partial_{1} v_{2}-\partial_{2} v_{1}$ equation
\begin{align} \label{vircitye1}
\partial_{t}\omega+(u\cdot\nabla)\omega=\nabla^{\perp}\nabla\cdot(b\otimes b).
\end{align}
More precisely, combining the estimates for $\omega$ and $b$, we derive by using \eqref{exstp2} that
\begin{gather}
\sup_{t\in [0,\,T]}(\|v(t)\|_{H^{1}}^{2}+\|\Lambda^{2\gamma+\beta}b(t)\|_{L^{2}}^{2})
+\int_{0}^{T}{\|\Lambda^{2\gamma+2\beta}b(\tau)\|_{L^{2}}^{2}\,d\tau}\leq
C(T,v_{0},b_{0}),\label{exstp3}\\
\sup_{t\in [0,\,T]}\|b(t)\|_{L^{\infty}}
+\int_{0}^{T}{\|\nabla b(\tau)\|_{L^{\infty}}^{2}\,d\tau}\leq
C(T,v_{0},b_{0}).\label{exstp4}
\end{gather}
At this moment, the estimate for $b$ in \eqref{exstp4} is enough to obtain the higher regularity. Consequently, it suffices to derive more regularity estimate for $u$ or $v$. Due to the nonlinear term in the velocity equation, it is natural to take advantage of the vorticity equation \eqref{vircitye1}. But the obstacle is to control the two-order derivatives of $b$. To achieve this goal, we first rewrite the equation $\eqref{eqrtyett01}_{2}$ due to $\nabla\cdot u=\nabla\cdot b=0$
\begin{align}\label{exstp9}
\partial_{t}b+\Lambda^{2\beta} b=\nabla\cdot(b\otimes u-u\otimes b).
\end{align}
Applying the space-time estimate (see Lemma \ref{afasqw6sf}) to \eqref{exstp9}, it allows us to show the following crucial inequality (see \eqref{RMHDTQye20} for details)
\begin{align}\label{exstp10}
\|\Lambda^{2\beta+2\gamma-\epsilon}b\|_{L_{T}^{1}L^{q}}\leq
C(T,v_{0},b_{0})+C(T,v_{0},b_{0})
\|\omega\|_{L_{T}^{1}L^{q}}.
\end{align}
Unfortunately, at present we have no estimate for $\|\omega\|_{L_{T}^{1}L^{q}}$. In order to overcome this difficulty and to close \eqref{exstp10}, we deduce from the vorticity equation \eqref{vircitye1} that (see \eqref{RMHDTQye23} for details)
\begin{align}\label{exstp11}
\|\omega(t)\|_{L_{T}^{\infty}L^{q}}\leq C(T,v_{0},b_{0})+C(T,v_{0},b_{0})
\|\Lambda^{2\beta+2\gamma-\epsilon}b\|_{L_{T}^{1}L^{q}}
^{\frac{2}{2\beta+2\gamma-\epsilon}},
\end{align}
where $\epsilon\in (0,\,\min\{2\beta-1,\,2\beta+2\gamma-2\})$ and $q\in\big(\frac{1}{2\beta-1-\epsilon},\,\infty\big)$.
Combining \eqref{exstp10} and \eqref{exstp11} yields
\begin{align}\label{exstp12}
\|\Lambda^{2\beta+2\gamma-\epsilon}b\|_{L_{T}^{1}L^{q}}\leq
C(T,v_{0},b_{0}).
\end{align}
As $\beta$ and $\gamma$ satisfy (\ref{condyet11}), we can take $q$ large enough to get from \eqref{exstp12} that
\begin{align*}
\|\nabla^{2}b\|_{L_{T}^{1}L^{\infty}} \leq
C(T,v_{0},b_{0}),
\end{align*}
which along with the vorticity equation \eqref{vircitye1} implies
\begin{align}\label{exstp13}
\sup_{t\in [0,\,T]}\|\omega(t)\|_{L^{\infty}}\leq
C(T,v_{0},b_{0}).
\end{align}
Actually, the above estimate \eqref{exstp13} is a key component to obtain the higher regularity.

 \vskip .1in
\begin{center}
$$\textbf{Case 2}:  \  \beta>1-\frac{\gamma}{2}\ \  \mbox{\rm with}\  \gamma\in(1,\,2]$$
\end{center}
\vskip .1in

We remark that the proof for \textbf{Case 2} can be performed as that of \textbf{Case 1}, but some different techniques and observations are required.
In this case, basic global $L^2$-energy \eqref{exstp1} is still valid.
Moreover, we can also show that the optimal regularity for $b$, namely, \eqref{exstp2} holds true. Our next step is to derive \eqref{exstp3}, namely,
\begin{align}\label{zxdrtgbn89}
\sup_{t\in [0,\,T]}(\|v(t)\|_{H^{1}}^{2}+\|\Lambda^{2\gamma+\beta}b(t)\|_{L^{2}}^{2})
+\int_{0}^{T}{\|\Lambda^{2\gamma+2\beta}b(\tau)\|_{L^{2}}^{2}\,d\tau}\leq
C(T,v_{0},b_{0}).
\end{align}
With the observation $2\gamma+2\beta>3$ due to $\beta>1-\frac{\gamma}{2}$ with $\gamma\in(1,\,2]$, this implies that the regularity for $b$ of \eqref{zxdrtgbn89} is good enough. More precisely, we have at least
 \begin{align*}
\|\nabla^{2}b\|_{L_{T}^{2}L^{\infty}} \leq
C(T,v_{0},b_{0}),
\end{align*}
Consequently, this along with the vorticity equation \eqref{vircitye1} leads to
\begin{align}\label{zqtgbn8ty2}
\sup_{t\in [0,\,T]}\|\omega(t)\|_{L^{\infty}}\leq
C(T,v_{0},b_{0}).
\end{align}

\vskip .1in
Finally, the estimates \eqref{exstp4} and \eqref{exstp13} or (\eqref{zxdrtgbn89} and \eqref{zqtgbn8ty2}) will allow us to propagate all the higher regularities for $(v,\,b)$. This ends the proof of Theorem \ref{Th1}.

\vskip .2in
We now explain the proof of Theorem \ref{Th2}. We first have the following basic global $L^2$-energy
\begin{align}\label{exstp14}
\|u(t)\|_{H^{\gamma}}^{2}
+\|b(t)\|_{L^{2}}^{2}+\int_{0}^{t}{\Big\|\frac{\Lambda^{2}}{ g^{2}(\Lambda)}u \Big\|_{L^{2}}^{2} \,d\tau} \leq
C(v_{0},b_{0}).
\end{align}
The regularity for $u$ in \eqref{exstp14} seems to be good, which is quite important to derive the higher regularity of the solution. Our next target is to show the estimate for $b$. This is not trivial as there is no dissipation in $b$-equation. Actually, by direct computations, we may conclude
\begin{align*}
\frac{d}{dt}V(t)+H(t)\leq C(1+\|\nabla u\|_{L^{\infty}}+\|\Delta u\|_{L^{2}})V(t),
\end{align*}
where
$$V(t):=\|b(t)\|_{L^{\infty}}^{3}+\|b(t)\|_{L^{\infty}}^{6}+
\|\nabla b(t)\|_{L^{2}}^{3}+\|v(t)\|_{L^{2}}^{2},\quad H(t):=\|\mathcal{L}\Lambda^{\alpha}v(t)\|_{L^{2}}^{2}.$$
In order to handle the two terms $\|\nabla u\|_{L^{\infty}}$ and $\|\Delta u\|_{L^{2}}$, we take fully exploit of the Littlewood-Paley technique, which allows us to deduce
\begin{align}
 \frac{d}{dt}V(t)+H(t)
 \leq C\left(1+\Big\|\frac{\Lambda^{2}}{ g^{2}(\Lambda)}u \Big\|_{L^{2}}\right)g^{2}\big[\left(e+V(t)\right)^{\frac{1}{2(r_{1}-2)}}\big]
 \sqrt{\ln\big(e+V(t)\big)} \big(e+V(t)\big).\nonumber
\end{align}
Using \eqref{exstp14} and the condition \eqref{logcobd}, we end up with
$$\sup_{t\in [0,\,T]}V(t)+\int_{0}^{T}H(\tau)\,d\tau\leq C(T,v_{0},b_{0}).$$
However, the regularity for $b$ here is still not enough. To gain higher regularity, we go through the process by improving the regularity for $v$ to improve the regularity for $b$. More precisely, the boundedness of $V(t)$ allows us to show (see Lemma \ref{zxdDSma} for details)
\begin{align}\label{fcftewqq2e}
\sup_{t\in [0,\,T]}\|\Lambda^{1+\nu}b(t)\|_{L^{2}}\leq
C(T,v_{0},b_{0}),\quad \forall\,\nu<\gamma.
\end{align}
The bound \eqref{fcftewqq2e} plays an important role in deriving the following estimate
\begin{align*}
 \sup_{t\in [0,\,T]}\|\Lambda^{\alpha}v(t)\|_{L^{2}}^{2}
+\int_{0}^{T}{\|\Lambda^{2\alpha}\mathcal{L}v(\tau)\|_{L^{2}}^{2} \,d\tau}\leq
C(T,v_{0},b_{0}),
\end{align*}
which further implies (see Lemma \ref{RMHDS33} for details)
\begin{gather}\label{exstp16}
 \sup_{t\in [0,\,T]}\|\nabla b(t)\|_{L^{\infty}}\leq C(T,v_{0},b_{0}).
\end{gather}
Consequently, by \eqref{exstp16}, there holds (see Lemma \ref{RMHDS34} for details)
\begin{align*}
\sup_{t\in [0,\,T]}\|\omega(t)\|_{L^{2}}^{2}
+\int_{0}^{T}{\|\Lambda^{\alpha}\mathcal{L}\omega(\tau)\|_{L^{2}}^{2} \,d\tau}\leq
C(T,v_{0},b_{0}).
\end{align*}
Finally, with the above estimates at our disposal, we can propagate all the higher regularities and thus complete the proof of Theorem \ref{Th2}.

\vskip .1in
The rest of the paper is organized as follows. In Section 2 we carry out the proof of Theorem \ref{Th1}. Section 3 is devoted to the proof of Theorem \ref{Th2} and Theorem \ref{addTh2}.
We sketch the proof of Theorem \ref{Th3} in Appendix \ref{aaapA}, while
in Appendix \ref{aaapB}, we present some useful lemmas of this paper.

\vskip .3in
\section{The proof of Theorem \ref{Th1}}\setcounter{equation}{0}
This section is devoted to the proof of Theorem \ref{Th1}. Before the proof, we will state a notation.
For a quasi-Banach space $X$ and for any $0<T\leq\infty$, we use standard notation $L^{p}(0,T;X)$ or $L_{T}^{p}(X)$ for
the quasi-Banach space of Bochner measurable functions $f$ from $(0,T)$ to $X$ endowed with the norm
\begin{equation*}
\|f\|_{L_{T}^{p}(X)}:=\left\{\aligned
&\left(\int_{0}^{T}{\|f(.,t)\|_{X}^{p}\,dt}\right)^{\frac{1}{p}}, \,\,\,\,\,1\leq p<\infty,\\
&\sup_{0\leq t\leq T}\|f(.,t)\|_{X},\qquad\qquad p=\infty.
\endaligned\right.
\end{equation*}
In this paper, we shall use the convention that $C$ denotes a generic constant, whose value may change from line to line. We shall write $C(\lambda_{1},\lambda_{2},\cdot\cdot\cdot,\lambda_{k})$ as the constant $C$ depends on the quantities $\lambda_{1},\lambda_{2},\cdot\cdot\cdot,\lambda_{k}$.
We also denote $\Psi\thickapprox \Upsilon$ if there exist two constants $C_{1}\leq C_{2}$ such that $C_{1}\Upsilon\leq \Psi\leq C_{2}\Upsilon$.

\vskip .1in
We state that the existence and uniqueness of local smooth solutions can be performed through the standard approach (see for example \cite{Constantinf,MB}). Thus, in order to complete the proof of Theorem \ref{Th1}, it is sufficient to establish {\it a priori} estimates that hold for any fixed $T>0$.

\vskip .1in
Keeping in mind the fact that when $\alpha=\gamma=0$ and $\beta>1$, the corresponding system admits a unique global regular solution  \cite{CWY,Agelas,Yedfsdf8,JZ01}. As a result, it suffices to consider the case $\beta\leq1$ (thus $\gamma>0$) in this section.
\vskip .1in

\begin{center}
$$\textbf{Case 1}:  \  \beta>1-\frac{\gamma}{2}\ \  \mbox{\rm with}\  \gamma\in(0,\,1] $$
\end{center}

\vskip .1in
In this case, we should keep in mind that $\beta>\frac{1}{2}$.
Now we present the basic $L^2$-energy estimate as follows.
\begin{lemma}\label{RMHDY1}
Assume $(v_{0},\,b_{0})$ satisfies the assumptions stated in
Theorem \ref{Th1}.
Then the corresponding solution $(v, b)$
of (\ref{eqrtyett01}) admits the following bound for any $t\in[0,\,T]$
\begin{align}\label{RMHDTQye01}
\|u(t)\|_{L^{2}}^{2}+\|\Lambda^{\gamma}u(t)\|_{L^{2}}^{2}
+\|b(t)\|_{L^{2}}^{2}+\int_{0}^{t}{\|\Lambda^{\beta}b(\tau)\|_{L^{2}}^{2}\,d\tau}\leq
C(v_{0},b_{0}).
\end{align}
\end{lemma}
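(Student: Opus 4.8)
The plan is to run a single coupled $L^2$ energy estimate, testing the velocity equation in \eqref{eqrtyett01} against the \emph{filtered} velocity $u$ (not against $v$) and the magnetic equation against $b$, and then adding. Since all manipulations below are carried out on the local smooth solution whose existence is already granted, every integration by parts is legitimate. First I would exploit the relation $v=u+(-\Delta)^{\gamma}u=u+\Lambda^{2\gamma}u$: pairing $\partial_{t}v$ with $u$ in $L^{2}$ gives
$$ \int_{\mathbb{R}^2} \partial_{t} v\cdot u\,dx = \frac12\frac{d}{dt}\big(\|u\|_{L^{2}}^{2}+\|\Lambda^{\gamma}u\|_{L^{2}}^{2}\big),$$
which reproduces exactly the two velocity norms on the left of \eqref{RMHDTQye01}. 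The pressure term then drops out, since $\int_{\mathbb{R}^2}\nabla(p+\tfrac12|b|^{2})\cdot u\,dx=0$ by $\nabla\cdot u=0$.

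The heart of the argument is the cancellation of the transport part against $u$. I would establish, using only $\nabla\cdot u=0$ and one integration by parts, the algebraic identity
$$ \int_{\mathbb{R}^2}\Big[(u\cdot\nabla)v+\sum_{j=1}^{2}v_{j}\nabla u_{j}\Big]\cdot u\,dx=0.$$
Concretely, writing the integrand in components as $u_{i}u_{k}\partial_{k}v_{i}+u_{i}v_{j}\partial_{i}u_{j}$, integrating the first term by parts in $x_{k}$ and discarding the divergence-free contribution turns it into $-\int v_{i}u_{k}\partial_{k}u_{i}$, which is precisely the negative of the second term after relabelling $(i,j)\mapsto(k,i)$. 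This is exactly the special structure of the regularized model: it is what lets the correction $\sum_{j}v_{j}\nabla u_{j}$ absorb the advection term despite the complete absence of velocity dissipation.

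Next I would treat the magnetic equation by testing against $b$. The transport term vanishes, $\int_{\mathbb{R}^2}(u\cdot\nabla)b\cdot b\,dx=0$ by $\nabla\cdot u=0$, and the dissipation yields $\int_{\mathbb{R}^2}\Lambda^{2\beta}b\cdot b\,dx=\|\Lambda^{\beta}b\|_{L^{2}}^{2}$. The two remaining coupling terms then annihilate each other upon addition: integrating by parts with $\nabla\cdot b=0$ gives $\int_{\mathbb{R}^2}(b\cdot\nabla)b\cdot u\,dx=-\int_{\mathbb{R}^2}(b\cdot\nabla)u\cdot b\,dx$, so the Lorentz force tested against $u$ exactly cancels the stretching term tested against $b$. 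Summing the two identities therefore leaves
$$\frac12\frac{d}{dt}\big(\|u\|_{L^{2}}^{2}+\|\Lambda^{\gamma}u\|_{L^{2}}^{2}+\|b\|_{L^{2}}^{2}\big)+\|\Lambda^{\beta}b\|_{L^{2}}^{2}=0.$$
Integrating in time and controlling the initial contribution — noting that $\widehat{u_{0}}=(1+|\xi|^{2\gamma})^{-1}\widehat{v_{0}}$, so that $\|u_{0}\|_{L^{2}}+\|\Lambda^{\gamma}u_{0}\|_{L^{2}}\le C\|v_{0}\|_{L^{2}}$ — yields \eqref{RMHDTQye01}.

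I do not anticipate a genuine obstacle here. The only non-routine points are the two cancellation identities, and both follow purely from the divergence-free constraints together with the designed form of the nonlinearity; once they are in hand the estimate is entirely standard. The remark worth keeping in mind is that one must test against $u$ rather than $v$, for it is precisely this choice that both converts the time derivative into the $H^{\gamma}$ norm and makes the transport cancellation work.
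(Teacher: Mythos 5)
Your proposal is correct and follows essentially the same route as the paper: test the velocity equation against the filtered velocity $u$ and the magnetic equation against $b$, invoke the two cancellations (the transport--correction identity and the Lorentz--stretching identity, which are exactly the paper's \eqref{cancc01}--\eqref{cancc02}), and integrate the resulting energy identity in time. The only difference is that you spell out the componentwise verification of the cancellations, which the paper states without proof.
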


\begin{proof}
Taking the inner product of $(\ref{eqrtyett01})_{1}$ with $u$ and
the inner product of $(\ref{eqrtyett01})_{2}$ with $b$, using the
divergence free property and summing them up, we have
\begin{align}\label{RMHDTQye02}
\frac{1}{2}\frac{d}{dt}(\|u(t)\|_{L^{2}}^{2}+\|\Lambda^{\gamma}u(t)\|_{L^{2}}^{2}
+\|b(t)\|_{L^{2}}^{2})+\|\Lambda^{\beta} b\|_{L^{2}}^{2}=0,
\end{align}
where we have used the following cancelations
\begin{align}\label{cancc01}
\int_{\mathbb{R}^{2}}{(b\cdot \nabla b)\cdot
u \,dx}+\int_{\mathbb{R}^{2}}{(b\cdot \nabla u) \cdot b\,dx}=0,\end{align}
\begin{align}\label{cancc02}
\int_{\mathbb{R}^{2}}{(u\cdot \nabla v)\cdot
u \,dx}+\int_{\mathbb{R}^{2}}{\Big(\sum_{j=1}^{2}v_{j}\nabla u_{j}\Big) \cdot u\,dx}=0.\end{align}
Integrating (\ref{RMHDTQye02}) in time yields the desired (\ref{RMHDTQye01}). This ends the proof of Lemma \ref{RMHDY1}.
\end{proof}

\vskip .1in
Next we will derive the following regularity estimate for $b$.

\begin{lemma}\label{RMHDY2}
Assume $(v_{0},\,b_{0})$ satisfies the assumptions stated in
Theorem \ref{Th1}. If $\beta>1-\frac{\gamma}{2}$ with $\gamma\in(0,\,1]$, then the corresponding solution $(v, b)$
of (\ref{eqrtyett01}) admits the following bound
\begin{align}\label{RMHDTQye03}
\sup_{t\in [0,\,T]}\|\Lambda^{\beta+\gamma-1}b(t)\|_{L^{2}}^{2}
+\int_{0}^{T}{\|\Lambda^{2\beta+\gamma-1}b(\tau)\|_{L^{2}}^{2}\,d\tau}\leq
C(T,v_{0},b_{0}).
\end{align}
In particular, due to $\beta>1-\frac{\gamma}{2}$, it holds from (\ref{RMHDTQye03}) that
\begin{align}\label{sdfTecfr1}
\int_{0}^{T}{\|b(\tau)\|_{L^{\infty}}^{2}\,d\tau}\leq
C(T,v_{0},b_{0}).
\end{align}
\end{lemma}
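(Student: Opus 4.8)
The plan is to run a single homogeneous energy estimate on the magnetic field at the critical level $s:=\beta+\gamma-1$, which is precisely the order for which the dissipation generates the quantity $\|\Lambda^{2\beta+\gamma-1}b\|_{L^2}$ in \eqref{RMHDTQye03}. Note that \eqref{condyet11} forces $s>\gamma/2>0$ and, crucially, $2\beta+\gamma-1>1$; this strict inequality will drive the entire argument. In the a priori framing already adopted, I apply $\Lambda^{s}$ to $\eqref{eqrtyett01}_2$, pair with $\Lambda^{s}b$, and use $\nabla\cdot u=0$ to kill the top-order transport contribution, obtaining
\[\frac12\frac{d}{dt}\|\Lambda^{s}b\|_{L^2}^2+\|\Lambda^{s+\beta}b\|_{L^2}^2 = -\int \Lambda^{s}(u\cdot\nabla b)\cdot\Lambda^{s}b\,dx+\int \Lambda^{s}(b\cdot\nabla u)\cdot\Lambda^{s}b\,dx,\]
where $s+\beta=2\beta+\gamma-1$ is the target dissipation exponent.

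For both nonlinear terms I would avoid commutators and instead exploit the divergence-free conditions together with a redistribution of fractional derivatives. Writing $u\cdot\nabla b=\nabla\cdot(u\otimes b)$ and $b\cdot\nabla u=\nabla\cdot(b\otimes u)$ and using the Plancherel identity $\int(\Lambda^{s}F)(\Lambda^{s}G)\,dx=\int(\Lambda^{s-\beta}F)(\Lambda^{s+\beta}G)\,dx$, each term is bounded by
\[\big\|\Lambda^{s-\beta}\nabla\cdot(u\otimes b)\big\|_{L^2}\,\|\Lambda^{s+\beta}b\|_{L^2}\lesssim \|\Lambda^{\gamma}(u\otimes b)\|_{L^2}\,\|\Lambda^{2\beta+\gamma-1}b\|_{L^2},\]
since $s-\beta+1=\gamma$ makes $\Lambda^{s-\beta}\nabla\cdot$ a Fourier multiplier of order $\gamma$. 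Thus everything reduces to the single scaling-consistent bilinear bound $\|\Lambda^{\gamma}(u\otimes b)\|_{L^2}\lesssim\|u\|_{\dot H^{\gamma}}\|b\|_{\dot H^{1}}$ in two dimensions.

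The main obstacle is exactly this bilinear estimate, because the naive fractional Leibniz rule produces an $L^\infty$-norm of $b$ that is not yet available at this stage. I would prove it by distributing the $\gamma$ derivatives through a paraproduct (or a Kato--Ponce inequality with a genuine fractional split), placing $\gamma_1\in(0,\gamma)$ derivatives on $u$ and $\gamma_2=\gamma-\gamma_1$ on $b$; the two-dimensional Sobolev embeddings $\|\Lambda^{\gamma_1}u\|_{L^{2/(1-\gamma_2)}}\lesssim\|u\|_{\dot H^{\gamma}}$ and $\|\Lambda^{\gamma_2}b\|_{L^{2/\gamma_2}}\lesssim\|b\|_{\dot H^{1}}$ balance the Hölder exponents exactly, so that every piece (including the low-$b$/high-$u$ paraproduct that threatens the $L^\infty$ loss) is controlled by $\|u\|_{\dot H^\gamma}\|b\|_{\dot H^1}$. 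Only the uniform bound $\|u\|_{\dot H^\gamma}\le C$ from \eqref{RMHDTQye01} is needed for the velocity factor; the case $\gamma=1$ is covered by keeping $\gamma_1,\gamma_2>0$.

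To close, I interpolate $\|b\|_{\dot H^1}\le\|b\|_{L^2}^{1-\lambda}\|\Lambda^{2\beta+\gamma-1}b\|_{L^2}^{\lambda}$ with $\lambda=1/(2\beta+\gamma-1)\in(0,1)$, where $2\beta+\gamma-1>1$ is essential. The right-hand side becomes $\lesssim\|u\|_{\dot H^\gamma}\|b\|_{L^2}^{1-\lambda}\|\Lambda^{2\beta+\gamma-1}b\|_{L^2}^{1+\lambda}$ with $1+\lambda<2$, so Young's inequality absorbs $\tfrac12\|\Lambda^{2\beta+\gamma-1}b\|_{L^2}^2$ into the dissipation and leaves a remainder that is a fixed power of $\|u\|_{\dot H^\gamma}\|b\|_{L^2}$, hence uniformly bounded by \eqref{RMHDTQye01}. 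Integrating in time (the datum $\|\Lambda^{s}b_0\|_{L^2}$ is finite since $s=\beta+\gamma-1\le\rho+1-\beta$) yields \eqref{RMHDTQye03}. Finally \eqref{sdfTecfr1} is immediate: since $2\beta+\gamma-1>1$, the Gagliardo--Nirenberg inequality $\|b\|_{L^\infty}\lesssim\|b\|_{L^2}^{1-\theta}\|\Lambda^{2\beta+\gamma-1}b\|_{L^2}^{\theta}$ with $\theta=1/(2\beta+\gamma-1)<1$, combined with the uniform $L^2$ bound and the time integral of $\|\Lambda^{2\beta+\gamma-1}b\|_{L^2}^2$ from \eqref{RMHDTQye03}, gives $\int_0^T\|b\|_{L^\infty}^2\,dt\le C(T,v_0,b_0)$ after one application of Hölder in time.
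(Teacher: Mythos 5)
Your overall skeleton matches the paper's: the same energy estimate at level $s=\beta+\gamma-1$, the same use of the divergence form to move $\beta$ derivatives onto the dissipative factor and reduce everything to $\|\Lambda^{\gamma}(u\otimes b)\|_{L^{2}}\,\|\Lambda^{2\beta+\gamma-1}b\|_{L^{2}}$, and the same final interpolation--Young--Gronwall closure together with the Gagliardo--Nirenberg deduction of \eqref{sdfTecfr1} from $2\beta+\gamma-1>1$. However, the step you isolate as ``the main obstacle'' is where the argument genuinely breaks: the bilinear bound $\|\Lambda^{\gamma}(u\otimes b)\|_{L^{2}}\lesssim\|u\|_{\dot H^{\gamma}}\|b\|_{\dot H^{1}}$ is the endpoint product law $\dot H^{\gamma}\times\dot H^{n/2}\to\dot H^{\gamma}$ in dimension $n=2$, and it is false. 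In the Bony decomposition the terms $T_{u}b$ and $R(u,b)$ are indeed fine with your exponents, but the low-$b$/high-$u$ paraproduct $T_{b}u$ --- precisely the piece you assert is controlled --- requires $\sup_{j}\|S_{j-1}b\|_{L^{\infty}}$, which $\dot H^{1}(\mathbb{R}^{2})$ does not control (one only gets $\|S_{j-1}b\|_{L^{\infty}}\lesssim\sqrt{j}\,\|b\|_{\dot H^{1}}$ by Cauchy--Schwarz over dyadic blocks, and a lacunary sum makes this sharp). There is no fractional Leibniz rule that lets you place only $\gamma_{1}\in(0,\gamma)$ derivatives on the high-frequency factor $u$ while putting the remaining $\gamma_{2}$ on the low-frequency factor $b$; the genuine Kato--Ponce inequality always produces a term with the full $\gamma$ derivatives on $u$, paired with $\|b\|_{L^{\infty}}$ (since $\Lambda^{\gamma}u$ is only known in $L^{2}$, Hölder forces the conjugate exponent $\infty$ on $b$).

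The repair is exactly what the paper does, and it is compatible with the rest of your write-up: do not try to eliminate $\|b\|_{L^{\infty}}$ at the bilinear level, but accept the Kato--Ponce output
\begin{align*}
\|\Lambda^{\gamma}(ub)\|_{L^{2}}\lesssim \|\Lambda^{\gamma}u\|_{L^{2}}\|b\|_{L^{\infty}}+\|u\|_{L^{p_{0}}}\|\Lambda^{\gamma}b\|_{L^{2p_{0}/(p_{0}-2)}},
\end{align*}
and then interpolate \emph{both} factors involving $b$ against the dissipation: $\|b\|_{L^{\infty}}\lesssim\|b\|_{L^{2}}^{1-\theta}\|\Lambda^{2\beta+\gamma-1}b\|_{L^{2}}^{\theta}$ with $\theta=1/(2\beta+\gamma-1)<1$ (this is where the hypothesis $\beta>1-\gamma/2$ enters, just as in your last step), and similarly for $\|\Lambda^{\gamma}b\|_{L^{2p_{0}/(p_{0}-2)}}$ with $p_{0}$ chosen so that $(1-\gamma)/2<1/p_{0}<(2\beta-1)/2$. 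After Young's inequality the dissipation absorbs these contributions and the remainder is a power of $\|u\|_{H^{\gamma}}\|b\|_{L^{2}}$, bounded by \eqref{RMHDTQye01}. With that substitution your proof closes; as written, it does not.
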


\begin{proof}
Applying $\Lambda^{\beta+\gamma-1}$ to the second equation of (\ref{eqrtyett01}) and multiplying it by $\Lambda^{\beta+\gamma-1}b$ yield
\begin{align}\label{RMHDTQye04}
\frac{1}{2}\frac{d}{dt}\|\Lambda^{\beta+\gamma-1}b(t)\|_{L^{2}}^{2}+
\|\Lambda^{2\beta+\gamma-1}b\|_{L^{2}}^{2}=J_{1}+J_{2},
\end{align}
where
$$J_{1}=\int_{\mathbb{R}^{2}}{\Lambda^{\beta+\gamma-1}(b\cdot \nabla u) \cdot \Lambda^{\beta+\gamma-1}b\,dx},\qquad J_{2}=-\int_{\mathbb{R}^{2}}{\Lambda^{\beta+\gamma-1}(u\cdot \nabla b) \cdot \Lambda^{\beta+\gamma-1}b\,dx}.$$
By $\nabla\cdot b=0$, Sobolev embedding inequalities and (\ref{yzzqq1}), we deduce
\begin{align}\label{cvfaddrt7y2}
J_{1}=&\int_{\mathbb{R}^{2}}{\Lambda^{\beta+\gamma-1}\nabla\cdot(b\otimes u) \cdot \Lambda^{\beta+\gamma-1}b\,dx}\nonumber\\
\leq&C\|\Lambda^{\gamma}(ub)\|_{L^{2}}\|\Lambda^{2\beta+\gamma-1}b\|_{L^{2}}
\nonumber\\
\leq&C(\|\Lambda^{\gamma}u\|_{L^{2}}\|b\|_{L^{\infty}}+
\|u\|_{L^{p_{0}}}\|\Lambda^{\gamma}b\|_{L^{\frac{2p_{0}}{p_{0}-2}}})
\|\Lambda^{2\beta+\gamma-1}b\|_{L^{2}}
\nonumber\\
\leq&C(\|u\|_{H^{\gamma}}\|b\|_{L^{2}}^{\frac{2\beta+\gamma-2}{2\beta+\gamma-1}}
\|\Lambda^{2\beta+\gamma-1}b\|_{L^{2}}^{\frac{1}{2\beta+\gamma-1}}+
\|u\|_{H^{\gamma}}
\|b\|_{L^{2}}^{\frac{(2\beta-1)p_{0}-2}{(2\beta+\gamma-1)p_{0}}}
\|\Lambda^{2\beta+\gamma-1}b\|_{L^{2}}^{\frac{\gamma p_{0}+2}{(2\beta+\gamma-1)p_{0}}})\nonumber\\&\times
\|\Lambda^{2\beta+\gamma-1}b\|_{L^{2}}
\nonumber\\
\leq&\frac{1}{4}\|\Lambda^{2\beta+\gamma-1}b\|_{L^{2}}^{2}
+C\|u\|_{H^{\gamma}}^{\frac{2(2\beta+\gamma-1)}{2\beta+\gamma-2}}\|b\|_{L^{2}}^{2}
+C\|u\|_{H^{\gamma}}^{\frac{2(2\beta+\gamma-1)p_{0}}{(2\beta-1)p_{0}-2}}
\|b\|_{L^{2}}^{2},
\end{align}
where $p_{0}>2$ satisfies
$$\frac{1-\gamma}{2}<\frac{1}{p_{0}}<\frac{2\beta-1}{2}.$$
We remark that as $\beta$ and $\gamma>0$ satisfy (\ref{condyet11}), the above $p_{0}$ actually would work.
Thanks to $\nabla\cdot u=0$, it also yields
\begin{align}J_{2}=&\int_{\mathbb{R}^{2}}{\Lambda^{\beta+\gamma-1}\nabla\cdot(u\otimes b) \cdot \Lambda^{\beta+\gamma-1}b\,dx}\nonumber\\
\leq&C\|\Lambda^{\gamma}(ub)\|_{L^{2}}\|\Lambda^{2\beta+\gamma-1}b\|_{L^{2}}
\nonumber\\
\leq&\frac{1}{4}\|\Lambda^{2\beta+\gamma-1}b\|_{L^{2}}^{2}
+C\|u\|_{H^{\gamma}}^{\frac{2(2\beta+\gamma-1)}{2\beta+\gamma-2}}\|b\|_{L^{2}}^{2}
+C\|u\|_{H^{\gamma}}^{\frac{2(2\beta+\gamma-1)p_{0}}{(2\beta-1)p_{0}-2}}
\|b\|_{L^{2}}^{2}.\nonumber
\end{align}
Inserting the above estimates into (\ref{RMHDTQye04}) implies
\begin{align}\label{RMHDTQye05}
\frac{d}{dt}\|\Lambda^{\beta+\gamma-1}b(t)\|_{L^{2}}^{2}+
\|\Lambda^{2\beta+\gamma-1}b\|_{L^{2}}^{2}\leq C(\|u\|_{H^{\gamma}}^{\frac{2(2\beta+\gamma-1)}{2\beta+\gamma-2}}
+\|u\|_{H^{\gamma}}^{\frac{2(2\beta+\gamma-1)p_{0}}{(2\beta-1)p_{0}-2}}
)\|b\|_{L^{2}}^{2}.
\end{align}
Recalling (\ref{RMHDTQye01}), one deduces from (\ref{RMHDTQye05}) that
$$\sup_{t\in [0,\,T]}\|\Lambda^{\beta+\gamma-1}b(t)\|_{L^{2}}^{2}
+\int_{0}^{T}{\|\Lambda^{2\beta+\gamma-1}b(\tau)\|_{L^{2}}^{2}\,d\tau}\leq
C(T,v_{0},b_{0}).$$
We thus end the proof of Lemma \ref{RMHDY2}.
\end{proof}

\vskip .1in
With the help of Lemma \ref{RMHDY2}, we continue to improve the regularity of $b$.

\begin{lemma}\label{RMHDY3}
Assume $(v_{0},\,b_{0})$ satisfies the assumptions stated in
Theorem \ref{Th1}. If $\beta>1-\frac{\gamma}{2}$ with $\gamma\in(0,\,1]$, then the corresponding solution $(v, b)$
of (\ref{eqrtyett01}) admits the following bound
\begin{align}\label{RMHDTQye06}
\sup_{t\in [0,\,T]}(\|v(t)\|_{H^{1}}^{2}+\|\Lambda^{2\gamma+\beta}b(t)\|_{L^{2}}^{2})
+\int_{0}^{T}{\|\Lambda^{2\gamma+2\beta}b(\tau)\|_{L^{2}}^{2}\,d\tau}\leq
C(T,v_{0},b_{0}).
\end{align}
In particular, we have
\begin{align}\label{sdfTecfr2}
\sup_{t\in [0,\,T]}\|b(t)\|_{L^{\infty}}
+\int_{0}^{T}{\|\nabla b(\tau)\|_{L^{\infty}}^{2}\,d\tau}\leq
C(T,v_{0},b_{0}).
\end{align}
\end{lemma}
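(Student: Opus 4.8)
The plan is to bound the two quantities $\|\omega(t)\|_{L^{2}}^{2}$ (with $\omega=\nabla^{\perp}\!\cdot v$) and $\|\Lambda^{2\gamma+\beta}b(t)\|_{L^{2}}^{2}$ simultaneously by a single Gr\"onwall argument, treating the magnetic dissipation $\|\Lambda^{2\gamma+2\beta}b\|_{L^{2}}^{2}$ as the only available smoothing. Since $v$ is divergence free, $\|\nabla v\|_{L^{2}}=\|\omega\|_{L^{2}}$, and the low-frequency part $\|v\|_{L^{2}}$ is recovered from Lemma \ref{RMHDY1} together with $v=u+\Lambda^{2\gamma}u$ and interpolation, so a bound on $\|\omega\|_{L^{2}}$ is equivalent to the claimed bound on $\|v\|_{H^{1}}$. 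Throughout I would exploit that the filter makes $u$ more regular than $v$: from $\omega=(1+\Lambda^{2\gamma})\nabla^{\perp}\!\cdot u$ one gets $\|\Lambda^{s}u\|_{L^{2}}\lesssim\|\omega\|_{L^{2}}$ for $1\le s\le 1+2\gamma$, and, interpolating with the \emph{uniform} bound $\|u\|_{H^{\gamma}}\le C$ of Lemma \ref{RMHDY1}, one obtains $\|u\|_{L^{\infty}}\lesssim 1+\|\omega\|_{L^{2}}^{\kappa}$ with $\kappa=\frac{1+\epsilon-\gamma}{1+\gamma}<1$ for $\epsilon<2\gamma$.

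For the vorticity I would use the two-dimensional identity $\nabla^{\perp}\!\cdot\nabla\cdot(b\otimes b)=(b\cdot\nabla)j$ with $j=\nabla^{\perp}\!\cdot b$, in which the quadratic-gradient terms cancel thanks to $\nabla\cdot b=0$; this is essential, since a cruder bound would leave a term $\|\nabla b\|_{L^{4}}^{2}\|\omega\|_{L^{2}}$ that cannot be closed in the supercritical range. Pairing \eqref{vircitye1} with $\omega$ then gives $\tfrac12\frac{d}{dt}\|\omega\|_{L^{2}}^{2}\le\|b\|_{L^{\infty}}\|\Lambda^{2}b\|_{L^{2}}\|\omega\|_{L^{2}}$. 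Splitting $\|b\|_{L^{\infty}}$ against $\|\omega\|_{L^{2}}$ by Young's inequality and interpolating $\|\Lambda^{2}b\|_{L^{2}}\le\|\Lambda^{\beta+\gamma-1}b\|_{L^{2}}^{1-\nu}\|\Lambda^{2\gamma+2\beta}b\|_{L^{2}}^{\nu}$ with $\nu<1$ (valid since $\beta+\gamma>1$), the factor $\|\Lambda^{2}b\|_{L^{2}}^{2}$ is absorbed into the dissipation, leaving only the time-integrable coefficient $\|b\|_{L^{\infty}}^{2}$ — precisely the quantity controlled by \eqref{sdfTecfr1} — in front of $\|\omega\|_{L^{2}}^{2}$.

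The heart of the matter is the estimate for $b$. Applying $\Lambda^{2\gamma+\beta}$ to $\eqref{eqrtyett01}_{2}$, pairing with $\Lambda^{2\gamma+\beta}b$, and shifting $\beta$ derivatives onto the test function yields $\tfrac12\frac{d}{dt}\|\Lambda^{2\gamma+\beta}b\|_{L^{2}}^{2}+\|\Lambda^{2\gamma+2\beta}b\|_{L^{2}}^{2}\le\big(\|\Lambda^{2\gamma}(b\cdot\nabla u)\|_{L^{2}}+\|\Lambda^{2\gamma}(u\cdot\nabla b)\|_{L^{2}}\big)\|\Lambda^{2\gamma+2\beta}b\|_{L^{2}}$. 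I would expand the two products by the fractional Leibniz rule and distribute derivatives so that the $b$-factors stay below the dissipation level (using $\beta>\tfrac12$, so that $2\gamma+1<2\gamma+2\beta$ and $\|\Lambda^{2\gamma+1}b\|_{L^{2}}\le\|\Lambda^{2\gamma+\beta}b\|_{L^{2}}^{1-\lambda}\|\Lambda^{2\gamma+2\beta}b\|_{L^{2}}^{\lambda}$ with $\lambda=\frac{1-\beta}{\beta}<1$) while the $u$-factors are handled by the interpolation of the first paragraph. The governing term is then of the form $(1+\|\omega\|_{L^{2}}^{\kappa})\,\|\Lambda^{2\gamma+\beta}b\|_{L^{2}}^{1-\lambda}\,\|\Lambda^{2\gamma+2\beta}b\|_{L^{2}}^{1+\lambda}$, and a Young step absorbing the dissipation leaves a coefficient $(1+\|\omega\|_{L^{2}})^{2\kappa/(1-\lambda)}$ multiplying $\|\Lambda^{2\gamma+\beta}b\|_{L^{2}}^{2}$.

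The main obstacle — and the reason direct energy estimates fail — is that there is \emph{no} dissipation in the velocity equation, so this last coefficient threatens to be superlinear in the energy. This is exactly where the hypothesis \eqref{condyet11} enters: computing the exponents shows $\frac{2\kappa}{1-\lambda}\le 2$ iff $\kappa\le 1-\lambda$, i.e. $\frac{1-\gamma}{1+\gamma}\le\frac{2\beta-1}{\beta}$, which rearranges to $(\beta-1)+\gamma(3\beta-1)\ge0$; at the threshold $\beta=1-\tfrac{\gamma}{2}$ this equals $\tfrac{3\gamma}{2}(1-\gamma)\ge0$, so $\beta>1-\tfrac{\gamma}{2}$ with $\gamma\in(0,1]$ guarantees, with room to spare, that the coefficient is at most quadratic in $\|\omega\|_{L^{2}}$, hence linear in $X(t):=\|\omega\|_{L^{2}}^{2}+\|\Lambda^{2\gamma+\beta}b\|_{L^{2}}^{2}$. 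Adding the two estimates gives $\frac{d}{dt}X+\tfrac12\|\Lambda^{2\gamma+2\beta}b\|_{L^{2}}^{2}\le C\big(1+\|b\|_{L^{\infty}}^{2}\big)\big(1+X\big)$, and since $\int_{0}^{T}(1+\|b\|_{L^{\infty}}^{2})\,dt<\infty$ by \eqref{sdfTecfr1}, Gr\"onwall's inequality yields \eqref{RMHDTQye06}. Finally, \eqref{sdfTecfr2} follows from \eqref{RMHDTQye06} by Sobolev embedding: $2\gamma+\beta>1$ gives the uniform bound on $\|b\|_{L^{\infty}}$, while $2\gamma+2\beta>2$ gives $\int_{0}^{T}\|\nabla b\|_{L^{\infty}}^{2}\,dt\lesssim\int_{0}^{T}\|\Lambda^{2\gamma+2\beta}b\|_{L^{2}}^{2}\,dt+C<\infty$.
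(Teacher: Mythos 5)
Your overall architecture (a combined Gr\"onwall argument for $X(t)=\|\omega\|_{L^{2}}^{2}+\|\Lambda^{2\gamma+\beta}b\|_{L^{2}}^{2}$, with the vorticity term controlled via $\|b\|_{L^{\infty}}\|\Delta b\|_{L^{2}}\|\omega\|_{L^{2}}$ and interpolation into the magnetic dissipation) matches the paper, and your treatment of the vorticity part, of $\|v\|_{L^{2}}$, and of \eqref{sdfTecfr2} is fine. But there is a genuine gap in the third paragraph, precisely at the term you identify as governing. After Young's inequality you are left with
$C\big(1+\|\omega\|_{L^{2}}^{\kappa}\big)^{2/(1-\lambda)}\|\Lambda^{2\gamma+\beta}b\|_{L^{2}}^{2}$, and your exponent condition $\kappa\le 1-\lambda$ only makes the \emph{coefficient} at most quadratic in $\|\omega\|_{L^{2}}$, i.e.\ at most linear in $X$. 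Since this coefficient multiplies $\|\Lambda^{2\gamma+\beta}b\|_{L^{2}}^{2}\le X$, the resulting term is of size $(1+X)X$, so the differential inequality you actually obtain is $\frac{d}{dt}X\lesssim (1+X)^{2}$ plus integrable terms. That is quadratic, not linear, in $X$; Gr\"onwall does not close it on all of $[0,T]$, and the asserted inequality $\frac{d}{dt}X+\tfrac12\|\Lambda^{2\gamma+2\beta}b\|_{L^{2}}^{2}\le C(1+\|b\|_{L^{\infty}}^{2})(1+X)$ does not follow. The root cause is that you placed $u$ in $L^{\infty}$, which costs a positive power of $\|\omega\|_{L^{2}}$, while simultaneously using $\|\Lambda^{2\gamma+\beta}b\|_{L^{2}}$ (a quantity you are still estimating) as the low end of the interpolation for $\|\Lambda^{2\gamma+1}b\|_{L^{2}}$.

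The paper avoids this by making the low-order factors \emph{already controlled} quantities: in \eqref{RMHDTQye12} the term $u\,\Lambda^{2\gamma+1}b$ is estimated as $\|u\|_{L^{p_{0}}}\|\Lambda^{2\gamma+1}b\|_{L^{2p_{0}/(p_{0}-2)}}$ with a finite $p_{0}$ chosen so that $\|u\|_{L^{p_{0}}}\lesssim\|u\|_{H^{\gamma}}$ is bounded uniformly in time by Lemma \ref{RMHDY1}, and $\|\Lambda^{2\gamma+1}b\|_{L^{2p_{0}/(p_{0}-2)}}$ is interpolated between $\|b\|_{L^{2}}$ (bounded) and $\|\Lambda^{2\gamma+2\beta}b\|_{L^{2}}$ (absorbed), so that after Young the leftover is a \emph{constant}, not $(1+X)X$. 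The window $\frac{1-\gamma}{2}<\frac{1}{p_{0}}<\frac{2\beta-1}{2}$ is nonempty exactly when $\beta>1-\frac{\gamma}{2}$ --- this is where the hypothesis enters, rather than through your exponent identity $(\beta-1)+\gamma(3\beta-1)\ge 0$. Your scheme can be repaired in the same spirit without abandoning $\|u\|_{L^{\infty}}$: interpolate $\|\Lambda^{2\gamma+1}b\|_{L^{2}}\le\|\Lambda^{2\beta+\gamma-1}b\|_{L^{2}}^{1-\mu}\|\Lambda^{2\gamma+2\beta}b\|_{L^{2}}^{\mu}$ with $\mu=\frac{\gamma+2-2\beta}{\gamma+1}\in(0,1)$, so that the low end is the quantity bounded in Lemma \ref{RMHDY2}; then Young leaves $C(1+\|\omega\|_{L^{2}}^{2\kappa/(1-\mu)})$ with $\frac{2\kappa}{1-\mu}\le 2$ exactly when $\beta\ge 1-\frac{\gamma}{2}$ (as $\epsilon\to0$), and the Gr\"onwall argument closes. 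As written, however, the step from your third paragraph to the claimed linear differential inequality is not valid.
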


\begin{proof}
Applying $\nabla^{\perp}:=(-\partial_{2},\,\partial_{1})^{\mathbb{T}}$ to the first equation of (\ref{eqrtyett01}), we show that the vorticity $\omega:=\nabla^{\perp}\cdot v\equiv \partial_{1} v_{2}-\partial_{2} v_{1}$ satisfies
\begin{align}\label{RMHDTQye07}
\partial_{t}\omega+(u\cdot\nabla)\omega=\nabla^{\perp}\nabla\cdot(b\otimes b).
\end{align}
It follows from \eqref{RMHDTQye07} that
\begin{align}\label{RMHDTQye08}
\frac{1}{2}\frac{d}{dt}\|\omega(t)\|_{L^{2}}^{2}&\leq C\|\nabla^{\perp}\nabla\cdot(b\otimes b)\|_{L^{2}}\|\omega\|_{L^{2}}\nonumber\\&\leq C\| b\|_{L^{\infty}}\|\Delta b\|_{L^{2}}\|\omega\|_{L^{2}}
\nonumber\\&\leq C\| b\|_{L^{\infty}}\|b\|_{L^{2}}^{\frac{\gamma+\beta-1}{\gamma+\beta}}
\|\Lambda^{2\gamma+2\beta}b\|_{L^{2}}
^{\frac{1}{\gamma+\beta}}\|\omega\|_{L^{2}}
\nonumber\\&\leq \frac{1}{4}\|\Lambda^{2\gamma+2\beta}b\|_{L^{2}}^{2}+C(1+\| b\|_{L^{\infty}}^{2})(1+\|\omega\|_{L^{2}}^{2}).
\end{align}
Applying $\Lambda^{2\gamma+\beta}$ to the second equation of (\ref{eqrtyett01}) and multiplying it by $\Lambda^{2\gamma+\beta}b$, one has
\begin{align}\label{RMHDTQye11}
\frac{1}{2}\frac{d}{dt}\|\Lambda^{2\gamma+\beta}b(t)\|_{L^{2}}^{2}+
\|\Lambda^{2\gamma+2\beta}b\|_{L^{2}}^{2}=J_{3}+J_{4},
\end{align}
where
$$J_{3}=\int_{\mathbb{R}^{2}}{\Lambda^{2\gamma+\beta}(b\cdot \nabla u) \cdot \Lambda^{2\gamma+\beta}b\,dx},\qquad J_{4}=-\int_{\mathbb{R}^{2}}{\Lambda^{2\gamma+\beta}(u\cdot \nabla b) \cdot \Lambda^{2\gamma+\beta}b\,dx}.$$
By the argument adopted in dealing with (\ref{cvfaddrt7y2}), it leads to
\begin{align}\label{RMHDTQye12}
J_{3}=&\int_{\mathbb{R}^{2}}{\Lambda^{2\gamma+\beta}\nabla\cdot(b\otimes u) \cdot \Lambda^{2\gamma+\beta}b\,dx}\nonumber\\
\leq&C\|\Lambda^{2\gamma+1}(ub)\|_{L^{2}}\|\Lambda^{2\gamma+2\beta} b\|_{L^{2}}
\nonumber\\
\leq&C(\|\Lambda^{2\gamma+1}u\|_{L^{2}}\|b\|_{L^{\infty}}+
\|u\|_{L^{p_{0}}}\|\Lambda^{2\gamma+1}b\|_{L^{\frac{2p_{0}}{p_{0}-2}}})
\|\Lambda^{2\gamma+2\beta} b\|_{L^{2}}
\nonumber\\
\leq&C(\|\nabla v\|_{L^{2}}\|b\|_{L^{\infty}}+
\|u\|_{H^{\gamma}}
\|b\|_{L^{2}}^{\frac{(2\beta-1)p_{0}-2}{2(\gamma+\beta)p_{0}}}
\|\Lambda^{2\gamma+2\beta} b\|_{L^{2}}^{\frac{(2\gamma+1)p_{0}+2}{2(\gamma+\beta)p_{0}}})
\|\Lambda^{2\gamma+2\beta} b\|_{L^{2}}
\nonumber\\
\leq&\frac{1}{8}\|\Lambda^{2\gamma+2\beta} b\|_{L^{2}}^{2}
+C\|b\|_{L^{\infty}}^{2}\|\omega\|_{L^{2}}^{2}
+C\|u\|_{H^{\gamma}}^{\frac{4(\beta+\gamma)p_{0}}{(2\beta-1)p_{0}-2}}
\|b\|_{L^{2}}^{2},
\end{align}
\begin{align}\label{RMHDTQye13}
J_{4}=&-\int_{\mathbb{R}^{2}}{\Lambda^{2\gamma+\beta}\nabla\cdot(u\otimes b) \cdot \Lambda^{2\gamma+\beta}b\,dx}\nonumber\\
\leq&C\|\Lambda^{2\gamma+1}(ub)\|_{L^{2}}\|\Lambda^{2\gamma+2\beta} b\|_{L^{2}}
\nonumber\\
\leq&\frac{1}{8}\|\Lambda^{2\gamma+2\beta} b\|_{L^{2}}^{2}
+C\|b\|_{L^{\infty}}^{2}\|\omega\|_{L^{2}}^{2}
+C\|u\|_{H^{\gamma}}^{\frac{4(\beta+\gamma)p_{0}}{(2\beta-1)p_{0}-2}}
\|b\|_{L^{2}}^{2}.
\end{align}
Putting (\ref{RMHDTQye12}) and (\ref{RMHDTQye13}) into (\ref{RMHDTQye11}) gives
\begin{align}\label{cdftade117}
\frac{1}{2}\frac{d}{dt}\|\Lambda^{2\gamma+\beta}b(t)\|_{L^{2}}^{2}+
\frac{3}{4}\|\Lambda^{2\gamma+2\beta}b\|_{L^{2}}^{2}\leq C\|b\|_{L^{\infty}}^{2}\|\omega\|_{L^{2}}^{2}
+C\|u\|_{H^{\gamma}}^{\frac{4(\beta+\gamma)p_{0}}{(2\beta-1)p_{0}-2}}
\|b\|_{L^{2}}^{2},
\end{align}
which along with \eqref{RMHDTQye08} further leads to
\begin{align}\label{RMHDTQye14}
\frac{d}{dt}(\|\Lambda^{2\gamma+\beta}b(t)\|_{L^{2}}^{2}+\|\omega(t)\|_{L^{2}}^{2})+
\|\Lambda^{2\gamma+2\beta}b\|_{L^{2}}^{2}\leq &C(1+\| b\|_{L^{\infty}}^{2})(1+\|\omega\|_{L^{2}}^{2})
\nonumber\\&+C\|u\|_{H^{\gamma}}^{\frac{4(\beta+\gamma)p_{0}}{(2\beta-1)p_{0}-2}}
\|b\|_{L^{2}}^{2}.
\end{align}
Thanks to \eqref{sdfTecfr1}, we deduce by applying the Gronwall inequality to \eqref{RMHDTQye14} that
\begin{align} \label{RMHDTQye15}
\sup_{t\in [0,\,T]}(\|\nabla v(t)\|_{L^{2}}^{2}+\|\Lambda^{2\gamma+\beta}b(t)\|_{L^{2}}^{2})
+\int_{0}^{T}{\|\Lambda^{2\gamma+2\beta}b(\tau)\|_{L^{2}}^{2}\,d\tau}\leq
C(T,v_{0},b_{0}),
\end{align}
where we have used the simple fact $\|\omega\|_{L^{2}}\thickapprox \|\nabla v\|_{L^{2}}$. In order to obtain the estimate of $\|v(t)\|_{L^{2}}$, we take the $L^2$-inner product of $\eqref{eqrtyett01}_{1}$ with $v$ to get
\begin{align*}
\frac{1}{2}\frac{d}{dt}\|v(t)\|_{L^{2}}^{2}&=-\int_{\mathbb{R}^{2}}{\Big(\sum_{j=1}^{2}v_{j}\nabla u_{j}\Big) \cdot v\,dx}+\int_{\mathbb{R}^{2}}{(b\cdot \nabla b) \cdot v\,dx}\nonumber\\&\leq C\|\nabla u\|_{L^{2}}\|v\|_{L^{4}}^{2}+C\| b\|_{L^{\infty}}\|\nabla b\|_{L^{2}}\|v\|_{L^{2}}\nonumber\\&\leq C\|\nabla v\|_{L^{2}}(\|v\|_{L^{2}}\|\nabla v\|_{L^{2}})+C\|b\|_{H^{2\gamma+\beta}}^{2}\|v\|_{L^{2}}
\nonumber\\&\leq C(\|\nabla v\|_{L^{2}}^{2}+\|b\|_{H^{2\gamma+\beta}}^{2})\|v\|_{L^{2}},
\end{align*}
which yields
\begin{align}\label{ccfert45cd3}
 \frac{d}{dt}\|v(t)\|_{L^{2}} \leq C(\|\nabla v\|_{L^{2}}^{2}+\|b\|_{H^{2\gamma+\beta}}^{2}).
\end{align}
Thanks to \eqref{RMHDTQye15}, we obtain
\begin{align} \label{RMHDTQye16}
\sup_{t\in [0,\,T]}\|v(t)\|_{L^{2}}\leq C(T,v_{0},b_{0}).
\end{align}
The desired estimate \eqref{RMHDTQye06} follows by combining \eqref{RMHDTQye15} and \eqref{RMHDTQye16}.
This completes the proof of Lemma \ref{RMHDY3}.
\end{proof}

\vskip .1in
Now we are in the position to show the following crucial bound which allows us to derive the higher regularity of $(v,\,b)$.
\begin{lemma}\label{RMHDY4}
Assume $(v_{0},\,b_{0})$ satisfies the assumptions stated in
Theorem \ref{Th1}. If $\beta>1-\frac{\gamma}{2}$ with $\gamma\in(0,\,1]$, then the corresponding solution $(v, b)$
of (\ref{eqrtyett01}) admits the following bound
\begin{align*}
\sup_{t\in [0,\,T]}\|\omega(t)\|_{L^{\infty}}\leq C(T,v_{0},b_{0}).
\end{align*}
\end{lemma}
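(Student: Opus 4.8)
The plan is to control the right-hand side $\nabla^{\perp}\nabla\cdot(b\otimes b)$ of the vorticity equation \eqref{RMHDTQye07} in $L^{\infty}$, which amounts to taming the second-order derivatives of $b$. Since Lemma \ref{RMHDY3} already furnishes $\sup_{t}\|b(t)\|_{L^{\infty}}\leq C$ and $\int_{0}^{T}\|\nabla b\|_{L^{\infty}}^{2}\,d\tau\leq C$, the only missing ingredient is a bound for $\|\nabla^{2}b\|_{L_{T}^{1}L^{\infty}}$. First I would rewrite the magnetic equation, using $\nabla\cdot u=\nabla\cdot b=0$, in the divergence form
$$\partial_{t}b+\Lambda^{2\beta}b=\nabla\cdot(b\otimes u-u\otimes b),$$
and apply the maximal space-time estimate of Lemma \ref{afasqw6sf} after acting with $\Lambda^{2\gamma-\epsilon}$. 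The smoothing of $\partial_{t}+\Lambda^{2\beta}$ converts this into a bound for $\|\Lambda^{2\beta+2\gamma-\epsilon}b\|_{L_{T}^{1}L^{q}}$ in terms of the initial data and $\|\Lambda^{2\gamma+1-\epsilon}(ub)\|_{L_{T}^{1}L^{q}}$. Estimating the product and throwing the highest derivatives onto $u$ (which, through $v=u+\Lambda^{2\gamma}u$ and Calder\'on--Zygmund theory, is controlled by $\|\omega\|_{L^{q}}$ modulo the lower-order bounds of Lemmas \ref{RMHDY1}--\ref{RMHDY3}) yields the key inequality \eqref{exstp10}, namely $\|\Lambda^{2\beta+2\gamma-\epsilon}b\|_{L_{T}^{1}L^{q}}\leq C+C\|\omega\|_{L_{T}^{1}L^{q}}$.

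Next I would close this estimate by a reverse bound coming from the vorticity equation itself. Testing \eqref{RMHDTQye07} in $L^{q}$ and using that the divergence-free transport term preserves the $L^{q}$ norm, I obtain $\frac{d}{dt}\|\omega\|_{L^{q}}\lesssim \|b\|_{L^{\infty}}\|\Lambda^{2}b\|_{L^{q}}+\|\nabla b\|_{L^{2q}}^{2}$. The top-order piece is handled by the interpolation $\|\Lambda^{2}b\|_{L^{q}}\lesssim \|b\|_{L^{q}}^{1-\theta}\|\Lambda^{2\beta+2\gamma-\epsilon}b\|_{L^{q}}^{\theta}$ with $\theta=\frac{2}{2\beta+2\gamma-\epsilon}$, which is strictly less than $1$ precisely because $\epsilon<2\beta+2\gamma-2$; the lower-order piece is harmless thanks to the bounds already at hand. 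Integrating in time and applying H\"older in $t$ then gives \eqref{exstp11}, namely $\|\omega\|_{L_{T}^{\infty}L^{q}}\leq C+C\|\Lambda^{2\beta+2\gamma-\epsilon}b\|_{L_{T}^{1}L^{q}}^{\theta}$. Feeding \eqref{exstp10} into this and invoking Young's inequality (legitimate since $\theta<1$) absorbs the coupling and produces the uniform bound \eqref{exstp12}: $\|\Lambda^{2\beta+2\gamma-\epsilon}b\|_{L_{T}^{1}L^{q}}\leq C$.

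Finally I would convert this into the desired $L^{\infty}$ control. As $\beta>1-\frac{\gamma}{2}$ forces $2\beta+2\gamma-2>0$, I can choose $\epsilon$ small and $q$ large (with $\epsilon\in(0,\min\{2\beta-1,2\beta+2\gamma-2\})$ and $q>\frac{1}{2\beta-1-\epsilon}$) so that the Sobolev embedding $\|\Lambda^{2}b\|_{L^{\infty}}\lesssim\|\Lambda^{2\beta+2\gamma-\epsilon}b\|_{L^{q}}$ holds, whence $\|\nabla^{2}b\|_{L_{T}^{1}L^{\infty}}\leq C$. Returning to \eqref{RMHDTQye07}, the transport structure and divergence-freeness of $u$ give $\frac{d}{dt}\|\omega\|_{L^{\infty}}\lesssim\|b\|_{L^{\infty}}\|\nabla^{2}b\|_{L^{\infty}}+\|\nabla b\|_{L^{\infty}}^{2}$, and integrating together with the three bounds $\sup_{t}\|b\|_{L^{\infty}}\leq C$, $\|\nabla^{2}b\|_{L_{T}^{1}L^{\infty}}\leq C$ and $\int_{0}^{T}\|\nabla b\|_{L^{\infty}}^{2}\,d\tau\leq C$ yields the claimed $\sup_{t\in[0,T]}\|\omega(t)\|_{L^{\infty}}\leq C$. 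The main obstacle is the circularity in the two estimates \eqref{exstp10} and \eqref{exstp11}, where the bound for $\nabla^{2}b$ requires control of $\omega$ and vice versa; the resolution hinges on the sublinear exponent $\theta<1$, which is exactly what the scaling condition \eqref{condyet11} provides, so identifying the admissible window of $(\epsilon,q)$ is the delicate point.
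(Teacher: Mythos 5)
Your proposal is correct and follows essentially the same route as the paper's own proof: the maximal space--time estimate of Lemma \ref{afasqw6sf} applied to the divergence-form magnetic equation to get \eqref{RMHDTQye20}, the reverse $L^{q}$ vorticity bound \eqref{RMHDTQye23} closed via the sublinear exponent $\tfrac{2}{2\beta+2\gamma-\epsilon}<1$, Sobolev embedding for large $q$ to reach $\|\nabla^{2}b\|_{L_{T}^{1}L^{\infty}}$, and the final integration of the $L^{\infty}$ vorticity inequality. The only deviations are harmless bookkeeping choices (where the $\epsilon$-loss sits in the derivative count and an extra lower-order term in the $L^{q}$ estimate), so no further comparison is needed.
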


\begin{proof}
We rewrite the equation  $\eqref{eqrtyett01}_{2}$ as follows
$$\partial_{t}b+\Lambda^{2\beta} b=\nabla\cdot(b\otimes u-u\otimes b).$$
Applying $\Lambda^{2\gamma+1}$ to both sides of the above equation, we have
\begin{align}\label{RMHDTQye19}
\partial_{t}\Lambda^{2\gamma+1}b+\Lambda^{2\beta}\Lambda^{2\gamma+1}b
=\Lambda^{2\gamma+1}\nabla\cdot(b\otimes u-u\otimes b).
\end{align}
Applying Lemma \ref{afasqw6sf} to \eqref{RMHDTQye19}, it yields that for any $\epsilon\in (0,\,2\beta-1)$ and for any $q\in\big(\frac{1}{2\beta-1-\epsilon},\,\infty\big)$
\begin{align}
\|\Lambda^{2\beta+2\gamma-\epsilon}b\|_{L_{T}^{1}L^{q}}
&=\|\Lambda^{2\beta-1-\epsilon}\Lambda^{2\gamma+1}b\|_{L_{T}^{1}L^{q}} \nonumber\\
&\leq C(T,v_{0},b_{0})+C\|\Lambda^{2\gamma+1}(ub)\|_{L_{T}^{1}L^{q}}
 \nonumber\\
&\leq C(T,v_{0},b_{0})+C\|u\|_{L_{T}^{\infty}L^{2q}} \|\Lambda^{2\gamma+1}b\|_{L_{T}^{1}L^{2q}}+C\|b\|_{L_{T}^{\infty}L^{\infty}}
\|\Lambda^{2\gamma+1}u\|_{L_{T}^{1}L^{q}}
\nonumber\\
&\leq C(T,v_{0},b_{0})+C\|v\|_{L_{T}^{\infty}H^{1}}
\|b\|_{L_{T}^{1}L^{q}}^{\frac{(2\beta-1-\epsilon)q-1}{
(2\beta+2\gamma-\epsilon)q}}\|\Lambda^{2\beta+2\gamma-\epsilon}b\|_{L_{T}^{1}L^{q}}^{\frac{(2\gamma+1)q+1}{
(2\beta+2\gamma-\epsilon)q}}\nonumber\\&+C\|b\|_{L_{T}^{\infty}L^{\infty}}
\|\omega\|_{L_{T}^{1}L^{q}}
\nonumber\\
&\leq \frac{1}{2}\|\Lambda^{2\beta+2\gamma-\epsilon}b\|_{L_{T}^{1}L^{q}}+
C(T,v_{0},b_{0})+C(T,v_{0},b_{0})
\|\omega\|_{L_{T}^{1}L^{q}},\nonumber
\end{align}
which leads to
\begin{align}\label{RMHDTQye20}
\|\Lambda^{2\beta+2\gamma-\epsilon}b\|_{L_{T}^{1}L^{q}}\leq
C(T,v_{0},b_{0})+C(T,v_{0},b_{0})
\|\omega\|_{L_{T}^{1}L^{q}}.
\end{align}
In order to close the above inequality, we multiply \eqref{RMHDTQye07} by $|\omega|^{q-2}\omega$ and integrate it over whole space to deduce
\begin{align}\label{RMHDTQye21}
\frac{1}{q}\frac{d}{dt}\|\omega(t)\|_{L^{q}}^{q}&\leq C\|\nabla^{\perp}\nabla\cdot(b\otimes b)\|_{L^{q}}\|\omega\|_{L^{q}}^{q-1}\nonumber\\&\leq C\| b\|_{L^{\infty}}\|\Delta b\|_{L^{q}}\|\omega\|_{L^{q}}^{q-1}
\nonumber\\&\leq C\| b\|_{L^{\infty}}\|b\|_{L^{q}}^{1-\frac{2}{2\gamma+2\beta-\epsilon}}
\|\Lambda^{2\beta+2\gamma-\epsilon}b\|_{L^{q}}
^{\frac{2}{2\beta+2\gamma-\epsilon}}\|\omega\|_{L^{q}}^{q-1}
\nonumber\\&\leq C(T,v_{0},b_{0})
\|\Lambda^{2\beta+2\gamma-\epsilon}b\|_{L^{q}}
^{\frac{2}{2\beta+2\gamma-\epsilon}}\|\omega\|_{L^{q}}^{q-1},
\end{align}
where we further restrict $\epsilon$ to satisfy $\epsilon\in (0,\,2\beta+2\gamma-2)$. Therefore, it is easy to deduce from \eqref{RMHDTQye21} that
\begin{align}\label{RMHDTQye22}
\frac{d}{dt}\|\omega(t)\|_{L^{q}}\leq C(T,v_{0},b_{0})
\|\Lambda^{2\beta+2\gamma-\epsilon}b\|_{L^{q}}
^{\frac{2}{2\beta+2\gamma-\epsilon}}.
\end{align}
Integrating \eqref{RMHDTQye22} in time yields
\begin{align}\label{RMHDTQye23}
\|\omega(t)\|_{L_{T}^{\infty}L^{q}}\leq C(T,v_{0},b_{0})+C(T,v_{0},b_{0})
\|\Lambda^{2\beta+2\gamma-\epsilon}b\|_{L_{T}^{1}L^{q}}
^{\frac{2}{2\beta+2\gamma-\epsilon}}.
\end{align}
Combining \eqref{RMHDTQye20} and \eqref{RMHDTQye23}, we derive
\begin{align}
\|\Lambda^{2\beta+2\gamma-\epsilon}b\|_{L_{T}^{1}L^{q}}&\leq
C(T,v_{0},b_{0})+C(T,v_{0},b_{0})
\|\omega\|_{L_{T}^{1}L^{q}}\nonumber\\&\leq
C(T,v_{0},b_{0})+C(T,v_{0},b_{0})
\|\omega\|_{L_{T}^{\infty}L^{q}}\nonumber\\&\leq
C(T,v_{0},b_{0})+C(T,v_{0},b_{0})
\|\Lambda^{2\beta+2\gamma-\epsilon}b\|_{L_{T}^{1}L^{q}}
^{\frac{2}{2\beta+2\gamma-\epsilon}}\nonumber\\&\leq
\frac{1}{2}\|\Lambda^{2\beta+2\gamma-\epsilon}b\|_{L_{T}^{1}L^{q}}+C(T,v_{0},b_{0})
,\nonumber
\end{align}
which allows us to conclude
\begin{align}
\|\Lambda^{2\beta+2\gamma-\epsilon}b\|_{L_{T}^{1}L^{q}}\leq
C(T,v_{0},b_{0}).\nonumber
\end{align}
Keeping in mind that $\beta>1-\frac{\gamma}{2}$, we may take $q$ suitably large such that
\begin{align}\label{RMHDTQye24}
\|\nabla^{2}b\|_{L_{T}^{1}L^{\infty}}\leq C\|b\|_{L_{T}^{1}L^{\infty}}+ C\|\Lambda^{2\beta+2\gamma-\epsilon}b\|_{L_{T}^{1}L^{q}}\leq
C(T,v_{0},b_{0}).
\end{align}
Combing back to \eqref{RMHDTQye21}, we can deduce
\begin{align*}
\frac{1}{q}\frac{d}{dt}\|\omega(t)\|_{L^{q}}^{q}&\leq C\|\nabla^{\perp}\nabla\cdot(b\otimes b)\|_{L^{q}}\|\omega\|_{L^{q}}^{q-1}\nonumber\\&\leq C(\|b\|_{L^{\infty}}\|\nabla^{2} b\|_{L^{q}}+\|\nabla b\|_{L^{\infty}}\|\nabla b\|_{L^{q}})\|\omega\|_{L^{q}}^{q-1},
\end{align*}
which gives
\begin{align*}
\frac{d}{dt}\|\omega(t)\|_{L^{q}}\leq C(\|b\|_{L^{\infty}}\|\nabla^{2} b\|_{L^{q}}+\|\nabla b\|_{L^{\infty}}\|\nabla b\|_{L^{q}}).
\end{align*}
Letting $p\rightarrow\infty$, one has
\begin{align}\label{RMHDTQye25}
\frac{d}{dt}\|\omega(t)\|_{L^{\infty}}\leq C(\|b\|_{L^{\infty}}\|\nabla^{2} b\|_{L^{\infty}}+\|\nabla b\|_{L^{\infty}}^{2}).
\end{align}
Making use of \eqref{sdfTecfr2} and \eqref{RMHDTQye24}, we get by integrating \eqref{RMHDTQye25} in time that
\begin{align*}
\sup_{t\in [0,\,T]}\|\omega(t)\|_{L^{\infty}}\leq C(T,v_{0},b_{0}).
\end{align*}
We therefore complete the proof of Lemma \ref{RMHDY4}.
\end{proof}

\vskip .1in

\begin{center}
$$\textbf{Case 2}:  \  \beta>1-\frac{\gamma}{2}\ \  \mbox{\rm with}\  \gamma\in(1,\,2] $$
\end{center}

\vskip .1in

In this case, we first state that the case $\alpha=\beta=0,\, \gamma=2$ will be considered in Theorem \ref{addTh2}. Therefore, it suffices to consider the case
$\beta>1-\frac{\gamma}{2}$ with $\gamma\in(1,\,2)$. Without loss of generality, we may assume $\beta\leq\frac{1}{2}$ as $\beta>\frac{1}{2}$ can be handled as that of \textbf{Case 1}.
We remark that in this case, we still have the estimate \eqref{RMHDTQye01}. Now we will establish the following result as Lemma \ref{RMHDY2}.
\begin{lemma}\label{ca2lena25}
Assume $(v_{0},\,b_{0})$ satisfies the assumptions stated in
Theorem \ref{Th1}. If $\beta>1-\frac{\gamma}{2}$ with $\gamma\in(1,\,2)$, then the corresponding solution $(v, b)$
of (\ref{eqrtyett01}) admits the following bound
\begin{align}\label{ca2yeryet28}
\sup_{t\in [0,\,T]}\|\Lambda^{\beta+\gamma-1}b(t)\|_{L^{2}}^{2}
+\int_{0}^{T}{\|\Lambda^{2\beta+\gamma-1}b(\tau)\|_{L^{2}}^{2}\,d\tau}\leq
C(T,v_{0},b_{0}).
\end{align}
In particular, due to $\beta>1-\frac{\gamma}{2}$, it holds from (\ref{ca2yeryet28}) that
\begin{align}\label{ca2yeryet29}
\int_{0}^{T}{(\|\nabla b(\tau)\|_{L^{2}}^{2}+\|b(\tau)\|_{L^{\infty}}^{2})\,d\tau}\leq
C(T,v_{0},b_{0}).
\end{align}
\end{lemma}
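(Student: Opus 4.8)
The plan is to run the same $\dot H^{\beta+\gamma-1}$ energy estimate as in Lemma~\ref{RMHDY2}, but to replace its product estimate by one adapted to the regime $\beta\leq\frac12$, $\gamma\in(1,2)$. Applying $\Lambda^{\beta+\gamma-1}$ to $\eqref{eqrtyett01}_{2}$ and pairing with $\Lambda^{\beta+\gamma-1}b$ produces, exactly as in \eqref{RMHDTQye04},
$$\frac12\frac{d}{dt}\|\Lambda^{\beta+\gamma-1}b\|_{L^2}^2+\|\Lambda^{2\beta+\gamma-1}b\|_{L^2}^2=J_1+J_2,$$
with $J_1$ the magnetic stretching term $\int\Lambda^{\beta+\gamma-1}\nabla\cdot(b\otimes u)\cdot\Lambda^{\beta+\gamma-1}b\,dx$ and $J_2$ the convection term $-\int\Lambda^{\beta+\gamma-1}(u\cdot\nabla b)\cdot\Lambda^{\beta+\gamma-1}b\,dx$. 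The goal is to bound $J_1+J_2$ by $\frac12\|\Lambda^{2\beta+\gamma-1}b\|_{L^2}^2$ plus a Gronwall-admissible multiple of $(1+\|\Lambda^{\beta+\gamma-1}b\|_{L^2}^2)$, and then to invoke \eqref{RMHDTQye01}.

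The main obstacle is precisely this nonlinear estimate, and it is here that Case~2 departs from Lemma~\ref{RMHDY2}. In Case~1 one bounds $J_1+J_2\lesssim\|\Lambda^{\gamma}(ub)\|_{L^2}\|\Lambda^{2\beta+\gamma-1}b\|_{L^2}$ and places all $\gamma$ derivatives on $b$; the resulting interpolation of $\|\Lambda^{\gamma}b\|$ between $\|b\|_{L^2}$ and $\|\Lambda^{2\beta+\gamma-1}b\|_{L^2}$ is admissible only when $2\beta+\gamma-1>\gamma$, i.e. $\beta>\frac12$. Since here $\beta\leq\frac12$, the level $\gamma$ sits at or above the dissipation level $2\beta+\gamma-1$, and that step breaks down (the exponent of $\|b\|_{L^2}$ in \eqref{cvfaddrt7y2} even becomes negative). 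The observation that rescues the argument is that $\gamma>1$ yields $u\in H^\gamma\hookrightarrow L^\infty$ and $\nabla u\in H^{\gamma-1}\hookrightarrow L^p$ for every $p<\frac{2}{2-\gamma}$, so I would keep $b$ at regularity not exceeding $2\beta+\gamma-1$ and move the excess derivatives onto $u$.

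Concretely, for the convection term I would use $\nabla\cdot u=0$ to cancel the leading transport piece and rewrite $J_2=-\int[\Lambda^{\beta+\gamma-1},u\cdot\nabla]b\cdot\Lambda^{\beta+\gamma-1}b\,dx$, then apply a commutator estimate that distributes derivatives as $\|\nabla u\|_{L^p}\|\Lambda^{\beta+\gamma-1}b\|_{L^{p'}}+\|\Lambda^{\beta+\gamma-1}u\|_{L^{q}}\|\nabla b\|_{L^{q'}}$; both factors of $b$ now live at levels not exceeding $2\beta+\gamma-1$ (namely $\nabla b$ at level $1<2\beta+\gamma-1$ and $\Lambda^{\beta+\gamma-1}b$ at the energy level), while $\Lambda^{\beta+\gamma-1}u$ is controlled by $\|u\|_{H^\gamma}$ after a Sobolev embedding. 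For the stretching term $J_1$, its only dangerous (low-$u$, high-$b$) contribution reduces, after using $\nabla\cdot b=0$, to expressions of the form $\int(\nabla u)\,|\Lambda^{\beta+\gamma-1}b|^2\,dx$, bounded by $\|\nabla u\|_{L^p}\|\Lambda^{\beta+\gamma-1}b\|_{L^{2p'}}^2$. In each case one interpolates $\|\Lambda^{\beta+\gamma-1}b\|_{L^{2p'}}\lesssim\|\Lambda^{\beta+\gamma-1}b\|_{L^2}^{1-\mu}\|\Lambda^{2\beta+\gamma-1}b\|_{L^2}^{\mu}$ with $\mu=\frac{1}{\beta p}$; the power $2\mu$ of the dissipation is $<2$ (so Young applies) exactly when $p>\frac1\beta$. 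Hence everything closes provided one can choose $p$ with $\frac1\beta<p<\frac{2}{2-\gamma}$, and this window is nonempty if and only if $\beta>1-\frac\gamma2$, precisely the hypothesis. Absorbing $\frac12\|\Lambda^{2\beta+\gamma-1}b\|_{L^2}^2$ and applying Gronwall together with \eqref{RMHDTQye01} then yields \eqref{ca2yeryet28}.

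Finally, \eqref{ca2yeryet29} is a direct consequence of \eqref{ca2yeryet28} once one notes $2\beta+\gamma-1>1$. For $\|b\|_{L^\infty}$ I would use the two-dimensional Gagliardo--Nirenberg inequality $\|b\|_{L^\infty}\lesssim\|b\|_{L^2}^{1-1/s}\|\Lambda^{s}b\|_{L^2}^{1/s}$ with $s=2\beta+\gamma-1$, so that $\int_0^T\|b\|_{L^\infty}^2\,d\tau$ is finite by H\"older in time (the exponent $2/s<2$) using \eqref{RMHDTQye01} and \eqref{ca2yeryet28}; for $\|\nabla b\|_{L^2}=\|\Lambda b\|_{L^2}$ I would interpolate the level $1$ between $\|b\|_{L^2}$ and $\|\Lambda^{2\beta+\gamma-1}b\|_{L^2}$, both of which are controlled. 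I expect the genuinely delicate point to be the bookkeeping of the H\"older and interpolation exponents in the nonlinear estimate, and in particular checking that the admissible window for $p$ collapses exactly at the borderline $\beta=1-\frac\gamma2$.
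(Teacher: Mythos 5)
Your overall route is the same as the paper's: the $\dot H^{\beta+\gamma-1}$ energy identity from \eqref{RMHDTQye04}, the correct diagnosis of why the Case~1 interpolation fails for $\beta\le\frac12$, a commutator treatment of the convection term $J_2$ with $\nabla u$ placed in $L^p$ via $\dot H^{\gamma-1}\hookrightarrow L^{2/(2-\gamma)}$, interpolation of the $b$-factors against the dissipation $\|\Lambda^{2\beta+\gamma-1}b\|_{L^2}$, and the window $\frac{2-\gamma}{2}<\frac1p<\beta$ that is nonempty exactly when $\beta>1-\frac\gamma2$. This matches the paper's proof of Lemma~\ref{ca2lena25} (which uses \eqref{yzz} with exponents $\frac{2}{1+\beta}$, $\frac{2}{1-\beta}$ and the constraint $\max\{\frac{2-\gamma}{2},\frac{\beta}{2}\}<\frac1p<\min\{\frac12,\beta\}$), and your derivation of \eqref{ca2yeryet29} from \eqref{ca2yeryet28} via Gagliardo--Nirenberg with $s=2\beta+\gamma-1>1$ is correct.

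There is, however, a genuine gap in your treatment of the stretching term $J_1$. You assert that its ``only dangerous (low-$u$, high-$b$) contribution'' is of the form $\int(\nabla u)\,|\Lambda^{\beta+\gamma-1}b|^2\,dx$, but the complementary contribution, where the fractional derivatives concentrate on $u$, is schematically
\begin{align*}
\int_{\mathbb{R}^{2}} b\cdot\nabla\Lambda^{\beta+\gamma-1}u\cdot\Lambda^{\beta+\gamma-1}b\,dx ,
\end{align*}
and this requires $\beta+\gamma$ derivatives of $u$, whereas the energy estimate \eqref{RMHDTQye01} only provides $u\in H^{\gamma}$; your stated strategy of ``moving the excess derivatives onto $u$'' cannot literally be carried out. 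The paper resolves this by first transferring $\Lambda^{\beta}$ onto the test function, bounding $J_1\le C\|\Lambda^{\gamma-1}(b\cdot\nabla u)\|_{L^2}\|\Lambda^{2\beta+\gamma-1}b\|_{L^2}$ so that $u$ receives at most $\gamma$ derivatives, and then estimating the resulting factor $\|b\|_{L^{\infty}}\|\Lambda^{\gamma}u\|_{L^2}$ by interpolating $\|b\|_{L^{\infty}}\lesssim\|b\|_{L^2}^{1-\frac{1}{2\beta+\gamma-1}}\|\Lambda^{2\beta+\gamma-1}b\|_{L^2}^{\frac{1}{2\beta+\gamma-1}}$, which raises the total power of the dissipation to $1+\frac{1}{2\beta+\gamma-1}<2$ --- again using $2\beta+\gamma-1>1$, i.e.\ $\beta>1-\frac\gamma2$. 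This extra step is not optional bookkeeping: without it the high-$u$ piece of $J_1$ is simply not controlled by the quantities available at this stage. The fix uses only tools you already deploy elsewhere in the proposal, so the argument is repairable, but as written the estimate of $J_1$ is incomplete.
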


\begin{proof}
Recalling \eqref{RMHDTQye04}, we get
\begin{align*}
\frac{1}{2}\frac{d}{dt}\|\Lambda^{\beta+\gamma-1}b(t)\|_{L^{2}}^{2}+
\|\Lambda^{2\beta+\gamma-1}b\|_{L^{2}}^{2}=J_{1}+J_{2},
\end{align*}
where
$$J_{1}=\int_{\mathbb{R}^{2}}{\Lambda^{\beta+\gamma-1}(b\cdot \nabla u) \cdot \Lambda^{\beta+\gamma-1}b\,dx},\qquad J_{2}=-\int_{\mathbb{R}^{2}}{\Lambda^{\beta+\gamma-1}(u\cdot \nabla b) \cdot \Lambda^{\beta+\gamma-1}b\,dx}.$$
It follows from (\ref{yzzqq1}) that
\begin{align*}
J_{1}
\leq&C\|\Lambda^{\gamma-1}(b\cdot \nabla u)\|_{L^{2}}\|\Lambda^{2\beta+\gamma-1}b\|_{L^{2}}
\nonumber\\
\leq&C(\|\Lambda^{\gamma}u\|_{L^{2}}\|b\|_{L^{\infty}}+
\|\nabla u\|_{L^{\frac{2}{2-\gamma}}}\|\Lambda^{\gamma-1}b\|_{L^{\frac{2}{\gamma-1}}})
\|\Lambda^{2\beta+\gamma-1}b\|_{L^{2}}
\nonumber\\
\leq&C\|\Lambda^{\gamma}u\|_{L^{2}}\|b\|_{L^{2}}^{\frac{2\beta+\gamma-2}{2\beta+\gamma-1}}
\|\Lambda^{2\beta+\gamma-1}b\|_{L^{2}}^{\frac{1}{2\beta+\gamma-1}}
\|\Lambda^{2\beta+\gamma-1}b\|_{L^{2}}
\nonumber\\
\leq&\frac{1}{4}\|\Lambda^{2\beta+\gamma-1}b\|_{L^{2}}^{2}
+C\|\Lambda^{\gamma}u\|_{L^{2}}^{\frac{2(2\beta+\gamma-1)}{2\beta+\gamma-2}}
\|b\|_{L^{2}}^{2}.
\end{align*}
According to \eqref{yzz}, it leads to
\begin{align*}
J_{4}=&-\int_{\mathbb{R}^{2}}{[\Lambda^{\beta+\gamma-1},\, u\cdot\nabla] b\cdot \Lambda^{\beta+\gamma-1}b\,dx}\nonumber\\
\leq&C\|[\Lambda^{\beta+\gamma-1},\, u\cdot\nabla] b\|_{L^{\frac{2}{1+\beta}}}\|\Lambda^{\beta+\gamma-1} b\|_{L^{\frac{2}{1-\beta}}}
\nonumber\\
\leq&C(\|\nabla u\|_{L^{p}}\|\Lambda^{\beta+\gamma-1} b\|_{L^{\frac{2p}{(1+\beta)p-2}}}+\|\nabla b\|_{L^{2}}\|\Lambda^{\beta+\gamma-1}u\|_{L^{\frac{2}{\beta}}})
\|\Lambda^{2\beta+\gamma-1} b\|_{L^{2}}
\nonumber\\
\leq&C(\|u\|_{H^{\gamma}}\|b\|_{L^{2}}^{1-\lambda}\|\Lambda^{2\beta+\gamma-1} b\|_{L^{2}}^{\lambda}+\|b\|_{L^{2}}^{\frac{2\beta+\gamma-2}{2\beta+\gamma-1}}
\|\Lambda^{2\beta+\gamma-1} b\|_{L^{2}}^{\frac{1}{2\beta+\gamma-1}}\|\Lambda^{\gamma}u\|_{L^{2}})\|\Lambda^{2\beta+\gamma-1} b\|_{L^{2}}
\nonumber\\
\leq&\frac{1}{8}\|\Lambda^{2\beta+\gamma-1} b\|_{L^{2}}^{2}
+C(\|u\|_{H^{\gamma}}^{\frac{2}{1-\lambda}}+\|\Lambda^{\gamma}u
\|_{L^{2}}^{\frac{2(2\beta+\gamma-1)}{2\beta+\gamma-2}})
\|b\|_{L^{2}}^{2},
\end{align*}
where $p$ and $\lambda$ satisfy
$$\max\left\{\frac{2-\gamma}{2},\ \frac{\beta}{2}\right\}<\frac{1}{p}<\min\left\{\frac{1}{2},\ \beta\right\},\qquad \lambda=\frac{(\gamma-1) p+2}{(2\beta+\gamma-1)p}\in (0,\,1).$$
We obtain by combining all the above estimates
\begin{align*}
\frac{d}{dt}\|\Lambda^{\beta+\gamma-1}b(t)\|_{L^{2}}^{2}+
\|\Lambda^{2\beta+\gamma-1}b\|_{L^{2}}^{2}\leq C(\|u\|_{H^{\gamma}}^{\frac{2}{1-\lambda}}+\|\Lambda^{\gamma}u
\|_{L^{2}}^{\frac{2(2\beta+\gamma-1)}{2\beta+\gamma-2}})
\|b\|_{L^{2}}^{2}.
\end{align*}
Using (\ref{RMHDTQye01}), we deduce
$$\sup_{t\in [0,\,T]}\|\Lambda^{\beta+\gamma-1}b(t)\|_{L^{2}}^{2}
+\int_{0}^{T}{\|\Lambda^{2\beta+\gamma-1}b(\tau)\|_{L^{2}}^{2}\,d\tau}\leq
C(T,v_{0},b_{0}),$$
which is \eqref{ca2yeryet28}. This completes the proof of Lemma \ref{ca2lena25}.
\end{proof}

\vskip .1in
Next, we will derive the following result as Lemma \ref{RMHDY3}.

\begin{lemma}\label{ca2lena26}
Assume $(v_{0},\,b_{0})$ satisfies the assumptions stated in
Theorem \ref{Th1}. If $\beta>1-\frac{\gamma}{2}$ with $\gamma\in(1,\,2)$, then the corresponding solution $(v, b)$
of (\ref{eqrtyett01}) admits the following bound
\begin{align}\label{ca2yeryet30}
\sup_{t\in [0,\,T]}(\|v(t)\|_{H^{1}}^{2}+\|\Lambda^{2\gamma+\beta}b(t)\|_{L^{2}}^{2})
+\int_{0}^{T}{\|\Lambda^{2\gamma+2\beta}b(\tau)\|_{L^{2}}^{2}\,d\tau}\leq
C(T,v_{0},b_{0}).
\end{align}
In particular, there holds
\begin{align}\label{ca2yeryet31}
\sup_{t\in [0,\,T]}\|b(t)\|_{L^{\infty}}
+\int_{0}^{T}{\|\nabla b(\tau)\|_{L^{\infty}}^{2}\,d\tau}\leq
C(T,v_{0},b_{0}).
\end{align}
\end{lemma}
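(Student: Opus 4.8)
The plan is to adapt the architecture of Lemma \ref{RMHDY3}, coupling the $L^{2}$-estimate of the vorticity $\omega$ with the $\Lambda^{2\gamma+\beta}$-level energy estimate of $b$. The essential new point is that the $L^{p_{0}}$-interpolation used in \textbf{Case 1} is no longer available once $\beta\le\frac12$, because it requires $p_{0}>\frac{2}{2\beta-1}$. To bypass this, I would handle the transport term by a commutator argument and exploit the observation that $\gamma>2-2\beta$ (which is precisely the hypothesis $\beta>1-\frac\gamma2$) renders the filtered velocity $u$ smooth enough that $\|\nabla u\|_{L^{r}}\lesssim\|u\|_{H^{\gamma}}$ is already controlled by the basic energy \eqref{RMHDTQye01}.

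First I would keep the vorticity bound \eqref{RMHDTQye08} unchanged: both \eqref{RMHDTQye07} and the interpolation of $\|\Delta b\|_{L^{2}}$ between $\|b\|_{L^{2}}$ and $\|\Lambda^{2\gamma+2\beta}b\|_{L^{2}}$ remain valid since $\gamma+\beta>\frac12$ here. Applying $\Lambda^{2\gamma+\beta}$ to $\eqref{eqrtyett01}_{2}$ and pairing with $\Lambda^{2\gamma+\beta}b$ then produces the identity \eqref{RMHDTQye11} with the two nonlinear contributions $J_{3}$ and $J_{4}$; the whole difficulty, and the step I expect to be the main obstacle, is to estimate these two terms without recourse to the Case 1 interpolation.

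For $J_{3}=\int_{\mathbb{R}^{2}}\Lambda^{2\gamma+\beta}\nabla\cdot(b\otimes u)\cdot\Lambda^{2\gamma+\beta}b\,dx$ I would move $\beta$ derivatives onto the second factor to bound it by $\|\Lambda^{2\gamma}\nabla\cdot(b\otimes u)\|_{L^{2}}\|\Lambda^{2\gamma+2\beta}b\|_{L^{2}}$ and then split the product. When the top-order derivatives fall on $u$, I get $\|b\|_{L^{\infty}}\|\Lambda^{2\gamma+1}u\|_{L^{2}}\lesssim\|b\|_{L^{\infty}}\|\omega\|_{L^{2}}$ via $v=u+\Lambda^{2\gamma}u$, which reproduces the structure of \eqref{RMHDTQye12}. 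When they fall on $b$, I get a factor $\|\nabla u\|_{L^{r}}\|\Lambda^{2\gamma}b\|_{L^{r'}}$ with $\frac1r+\frac1{r'}=\frac12$; by Sobolev embedding and interpolation $\|\Lambda^{2\gamma}b\|_{L^{r'}}\lesssim\|b\|_{L^{2}}^{1-\theta}\|\Lambda^{2\gamma+2\beta}b\|_{L^{2}}^{\theta}$ with $\theta<1$ provided $\frac1{r'}>\frac12-\beta$, while $\|\nabla u\|_{L^{r}}\lesssim\|u\|_{H^{1+\frac2{r'}}}\lesssim\|u\|_{H^{\gamma}}\le C$ provided $1+\frac2{r'}\le\gamma$. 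These two requirements on $\frac1{r'}$ are compatible exactly when $\gamma>2-2\beta$, i.e.\ under \eqref{condyet11}, so after Young's inequality this piece contributes only $\frac18\|\Lambda^{2\gamma+2\beta}b\|_{L^{2}}^{2}+C\|b\|_{L^{2}}^{2}$. For $J_{4}$ a direct product estimate would force $2\gamma+1>2\gamma+2\beta$ derivatives onto $b$, exceeding the dissipation; instead I would use $\nabla\cdot u=0$ to write $J_{4}=-\int_{\mathbb{R}^{2}}[\Lambda^{2\gamma+\beta},\,u\cdot\nabla]b\cdot\Lambda^{2\gamma+\beta}b\,dx$ and invoke the commutator estimate \eqref{yzz}, exactly as in the treatment of the transport term in Lemma \ref{ca2lena25}, again using $\|\nabla u\|_{L^{p}}\lesssim\|u\|_{H^{\gamma}}\le C$ to absorb the outcome into $\frac18\|\Lambda^{2\gamma+2\beta}b\|_{L^{2}}^{2}$ plus integrable remainders.

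Collecting these bounds and adding the vorticity estimate yields a differential inequality of the form
$$\frac{d}{dt}\big(\|\Lambda^{2\gamma+\beta}b\|_{L^{2}}^{2}+\|\omega\|_{L^{2}}^{2}\big)+\|\Lambda^{2\gamma+2\beta}b\|_{L^{2}}^{2}\le C\big(1+\|b\|_{L^{\infty}}^{2}\big)\big(1+\|\omega\|_{L^{2}}^{2}+\|\Lambda^{2\gamma+\beta}b\|_{L^{2}}^{2}\big),$$
and since \eqref{ca2yeryet29} supplies $\int_{0}^{T}\|b\|_{L^{\infty}}^{2}\,d\tau\le C$, the Gronwall inequality closes the bound for $\|\nabla v\|_{L^{2}}^{2}+\|\Lambda^{2\gamma+\beta}b\|_{L^{2}}^{2}$ together with $\int_{0}^{T}\|\Lambda^{2\gamma+2\beta}b\|_{L^{2}}^{2}\,d\tau$. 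The $L^{2}$-bound of $v$ then follows from the same energy computation that produced \eqref{ccfert45cd3}, which establishes \eqref{ca2yeryet30}. Finally, \eqref{ca2yeryet31} is immediate from Sobolev embedding: since $\beta>1-\frac\gamma2$ with $\gamma\in(1,2)$ forces $2\gamma+2\beta>3$, \eqref{ca2yeryet30} gives $b\in L^{\infty}_{T}H^{2\gamma+\beta}\hookrightarrow L^{\infty}_{T}L^{\infty}$, and the time-integrability of $\|\Lambda^{2\gamma+2\beta}b\|_{L^{2}}$ together with $H^{2\gamma+2\beta}\hookrightarrow W^{1,\infty}$ controls $\int_{0}^{T}\|\nabla b\|_{L^{\infty}}^{2}\,d\tau$.
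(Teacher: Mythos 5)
Your proposal matches the paper's own proof in all essentials: the same coupling of the $L^{2}$-vorticity estimate \eqref{RMHDTQye08} with the $\Lambda^{2\gamma+\beta}$-energy of $b$, the same product estimate for $J_{3}$ exploiting $\|\nabla u\|_{L^{r}}\lesssim\|u\|_{H^{\gamma}}$ (the paper takes the specific exponents $r=\tfrac{2}{2-\gamma}$, $r'=\tfrac{2}{\gamma-1}$), the same commutator treatment of $J_{4}$ via \eqref{yzz}, and the same Gronwall closure using \eqref{ca2yeryet29} followed by the $\|v\|_{L^{2}}$ step from \eqref{ccfert45cd3}. The only cosmetic difference is that the paper's $J_{4}$ bound leaves an extra coefficient $\|\nabla b\|_{L^{2}}^{2}$ in the Gronwall factor, which your "integrable remainders" correctly covers since \eqref{ca2yeryet29} also controls $\int_{0}^{T}\|\nabla b\|_{L^{2}}^{2}\,d\tau$.
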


\begin{proof}
Recalling \eqref{RMHDTQye08}, we have
\begin{align}\label{ca2yeryet32}
\frac{1}{2}\frac{d}{dt}\|\omega(t)\|_{L^{2}}^{2} \leq \frac{1}{4}\|\Lambda^{2\gamma+2\beta}b\|_{L^{2}}^{2}+C(1+\| b\|_{L^{\infty}}^{2})(1+\|\omega\|_{L^{2}}^{2}).
\end{align}
We have from \eqref{RMHDTQye11} that
\begin{align}\label{ca2yeryet33}
\frac{1}{2}\frac{d}{dt}\|\Lambda^{2\gamma+\beta}b(t)\|_{L^{2}}^{2}+
\|\Lambda^{2\gamma+2\beta}b\|_{L^{2}}^{2}=J_{3}+J_{4},
\end{align}
where
$$J_{3}=\int_{\mathbb{R}^{2}}{\Lambda^{2\gamma+\beta}(b\cdot \nabla u) \cdot \Lambda^{2\gamma+\beta}b\,dx},\qquad J_{4}=-\int_{\mathbb{R}^{2}}{\Lambda^{2\gamma+\beta}(u\cdot \nabla b) \cdot \Lambda^{2\gamma+\beta}b\,dx}.$$
Making use of \eqref{yzzqq1}, one deduces
\begin{align*}
J_{3}
\leq&C\|\Lambda^{2\gamma}(b\nabla u)\|_{L^{2}}\|\Lambda^{2\gamma+2\beta} b\|_{L^{2}}
\nonumber\\
\leq&C(\|b\|_{L^{\infty}}\|\Lambda^{2\gamma+1}u\|_{L^{2}}+
\|\nabla u\|_{L^{\frac{2}{2-\gamma}}}\|\Lambda^{2\gamma}b\|_{L^{\frac{2}{\gamma-1}}})
\|\Lambda^{2\gamma+2\beta} b\|_{L^{2}}
\nonumber\\
\leq&C(\|\nabla v\|_{L^{2}}\|b\|_{L^{\infty}}+
\|\Lambda^{\gamma}u\|_{L^{2}}
\|b\|_{L^{2}}^{\frac{2\beta+\gamma-2}{2(\beta+\gamma)}}
\|\Lambda^{2\gamma+2\beta} b\|_{L^{2}}^{\frac{2+\gamma}{2(\beta+\gamma)}})
\|\Lambda^{2\gamma+2\beta} b\|_{L^{2}}
\nonumber\\
\leq&\frac{1}{8}\|\Lambda^{2\gamma+2\beta} b\|_{L^{2}}^{2}
+C\|b\|_{L^{\infty}}^{2}\|\omega\|_{L^{2}}^{2}
+C\|\Lambda^{\gamma}u\|_{L^{2}}^{\frac{4(\beta+\gamma)}{2\beta+\gamma-2}}
\|b\|_{L^{2}}^{2}.
\end{align*}
For $J_{4}$, we have by \eqref{yzz} that
\begin{align*}
J_{4}=&-\int_{\mathbb{R}^{2}}{[\Lambda^{2\gamma+\beta},\, u\cdot\nabla] b\cdot \Lambda^{2\gamma+\beta}b\,dx}\nonumber\\
\leq&C\|[\Lambda^{2\gamma+\beta},\, u\cdot\nabla] b\|_{L^{\frac{2}{1+\beta}}}\|\Lambda^{2\gamma+\beta} b\|_{L^{\frac{2}{1-\beta}}}
\nonumber\\
\leq&C(\|\nabla u\|_{L^{p}}\|\Lambda^{2\gamma+\beta} b\|_{L^{\frac{2p}{(1+\beta)p-2}}}+\|\nabla b\|_{L^{2}}\|\Lambda^{2\gamma+\beta}u\|_{L^{\frac{2}{\beta}}})\|\Lambda^{2\gamma+2\beta} b\|_{L^{2}}
\nonumber\\
\leq&C(\|u\|_{H^{\gamma}}\|b\|_{L^{2}}^{1-\theta}\|\Lambda^{2\gamma+2\beta} b\|_{L^{2}}^{\theta}+\|\nabla b\|_{L^{2}}\|\Lambda^{2\gamma+1}u\|_{L^{2}})\|\Lambda^{2\gamma+2\beta} b\|_{L^{2}}
\nonumber\\
\leq&\frac{1}{8}\|\Lambda^{2\gamma+2\beta} b\|_{L^{2}}^{2}
+C\|\nabla b\|_{L^{2}}^{2}\|\omega\|_{L^{2}}^{2}
+C\|u\|_{H^{\gamma}}^{\frac{2}{1-\theta}}
\|b\|_{L^{2}}^{2},
\end{align*}
where $p$ and $\theta$ satisfy
$$\max\left\{\frac{2-\gamma}{2},\ \frac{\beta}{2}\right\}<\frac{1}{p}<\min\left\{\frac{1}{2},\ \beta\right\},\qquad \theta=\frac{\gamma p+1}{(\beta+\gamma)p}\in (0,\,1).$$
Substituting the above two estimates back into (\ref{ca2yeryet33}) gives
\begin{align*}
\frac{1}{2}\frac{d}{dt}\|\Lambda^{2\gamma+\beta}b(t)\|_{L^{2}}^{2}+
\frac{3}{4}\|\Lambda^{2\gamma+2\beta}b\|_{L^{2}}^{2}&\leq C(\|b\|_{L^{\infty}}^{2}+\|\nabla b\|_{L^{2}}^{2})\|\omega\|_{L^{2}}^{2}
\nonumber\\&+C(\|\Lambda^{\gamma}u\|_{L^{2}}^{\frac{4(\beta+\gamma)}{2\beta+\gamma-2}}
+\|u\|_{H^{\gamma}}^{\frac{2}{1-\theta}})
\|b\|_{L^{2}}^{2},
\end{align*}
which along with \eqref{ca2yeryet32} gives
\begin{align*}
\frac{d}{dt}(\|\Lambda^{2\gamma+\beta}b(t)\|_{L^{2}}^{2}+\|\omega(t)\|_{L^{2}}^{2})+
\|\Lambda^{2\gamma+2\beta}b\|_{L^{2}}^{2}\leq &C(1+\| b\|_{L^{\infty}}^{2}+\|\nabla b\|_{L^{2}}^{2})(1+\|\omega\|_{L^{2}}^{2})
\nonumber\\&+C(\|\Lambda^{\gamma}u\|_{L^{2}}^{\frac{4(\beta+\gamma)}{2\beta+\gamma-2}}
+\|u\|_{H^{\gamma}}^{\frac{2}{1-\theta}})
\|b\|_{L^{2}}^{2}.
\end{align*}
Thanks to \eqref{RMHDY1}, \eqref{ca2yeryet29} and the Gronwall inequality, we have that
\begin{align} \label{ca2yeryet35}
\sup_{t\in [0,\,T]}(\|\nabla v(t)\|_{L^{2}}^{2}+\|\Lambda^{2\gamma+\beta}b(t)\|_{L^{2}}^{2})
+\int_{0}^{T}{\|\Lambda^{2\gamma+2\beta}b(\tau)\|_{L^{2}}^{2}\,d\tau}\leq
C(T,v_{0},b_{0}).
\end{align}
Coming back to \eqref{ccfert45cd3}, we get
\begin{align*}
 \frac{d}{dt}\|v(t)\|_{L^{2}} \leq C(\|\nabla v\|_{L^{2}}^{2}+\|b\|_{H^{2\gamma+\beta}}^{2}).
\end{align*}
Using \eqref{ca2yeryet35}, it leads to
\begin{align*}
\sup_{t\in [0,\,T]}\|v(t)\|_{L^{2}}\leq C(T,v_{0},b_{0}).
\end{align*}
This completes the proof of Lemma \ref{ca2lena26}.
\end{proof}

\vskip .1in
Now let us to show the following crucial estimate.
\begin{lemma}\label{ca2lena27}
Assume $(v_{0},\,b_{0})$ satisfies the assumptions stated in
Theorem \ref{Th1}. If $\beta>1-\frac{\gamma}{2}$ with $\gamma\in(1,\,2)$, then the corresponding solution $(v, b)$
of (\ref{eqrtyett01}) admits the following bound
\begin{align*}
\sup_{t\in [0,\,T]} \|\omega(t)\|_{L^{\infty}} \leq
C(T,v_{0},b_{0}).
\end{align*}
\end{lemma}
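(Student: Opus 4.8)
The plan is to exploit that, in contrast to \textbf{Case 1}, the regularity of $b$ already furnished by Lemma \ref{ca2lena26} is by itself strong enough to control $\nabla^{2}b$ in $L^{\infty}$, so that the delicate space-time bootstrap of Lemma \ref{RMHDY4} becomes unnecessary. The crucial observation is that $\gamma\in(1,\,2)$ together with $\beta>1-\frac{\gamma}{2}$ forces $2\gamma+2\beta>2\gamma+(2-\gamma)=\gamma+2>3$. Since we work in two dimensions, the embedding $H^{s}(\mathbb{R}^{2})\hookrightarrow W^{2,\infty}(\mathbb{R}^{2})$ holds whenever $s>3$; applying it with $s=2\gamma+2\beta$ gives $\|\nabla^{2}b\|_{L^{\infty}}\leq C\|b\|_{H^{2\gamma+2\beta}}$. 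I would first integrate this in time and invoke the dissipative bound in \eqref{ca2yeryet30} to obtain
\begin{align*}
\|\nabla^{2}b\|_{L_{T}^{2}L^{\infty}}^{2}\leq C\int_{0}^{T}\|b(\tau)\|_{H^{2\gamma+2\beta}}^{2}\,d\tau\leq C(T,v_{0},b_{0}).
\end{align*}

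Next I would return to the vorticity equation \eqref{RMHDTQye07}. Testing it against $|\omega|^{q-2}\omega$ and using $\nabla\cdot u=0$ to cancel the transport contribution, exactly as in the derivation of \eqref{RMHDTQye21}, yields for every finite $q$ the inequality $\frac{d}{dt}\|\omega\|_{L^{q}}\leq C\|\nabla^{\perp}\nabla\cdot(b\otimes b)\|_{L^{q}}$. Bounding the right-hand side by $C(\|b\|_{L^{\infty}}\|\nabla^{2}b\|_{L^{q}}+\|\nabla b\|_{L^{\infty}}\|\nabla b\|_{L^{q}})$ and letting $q\to\infty$ then produces
\begin{align*}
\frac{d}{dt}\|\omega(t)\|_{L^{\infty}}\leq C\big(\|b\|_{L^{\infty}}\|\nabla^{2}b\|_{L^{\infty}}+\|\nabla b\|_{L^{\infty}}^{2}\big).
\end{align*}

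Finally I would integrate in time. The term $\|\nabla b\|_{L^{\infty}}^{2}$ is integrable by \eqref{ca2yeryet31}, while for the cross term the Cauchy--Schwarz inequality in $\tau$ gives
\begin{align*}
\int_{0}^{T}\|b\|_{L^{\infty}}\|\nabla^{2}b\|_{L^{\infty}}\,d\tau\leq \Big(\sup_{t\in[0,T]}\|b(t)\|_{L^{\infty}}\Big)T^{\frac12}\|\nabla^{2}b\|_{L_{T}^{2}L^{\infty}},
\end{align*}
which is finite by \eqref{ca2yeryet31} and the bound just derived. Together with $\omega_{0}=\nabla^{\perp}\cdot v_{0}\in L^{\infty}$ (from $v_{0}\in H^{\rho}$ with $\rho>2$), this closes the estimate and delivers $\sup_{t\in[0,T]}\|\omega(t)\|_{L^{\infty}}\leq C(T,v_{0},b_{0})$. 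I do not expect a genuine obstacle in this case: the essential difficulty has already been absorbed into Lemmas \ref{ca2lena25}--\ref{ca2lena26}, and the only point needing care is the verification that the threshold $2\gamma+2\beta>3$ indeed places $b$ in the class of functions with bounded second derivatives, which is exactly what $\gamma>1$ and $\beta>1-\frac{\gamma}{2}$ guarantee and what renders \textbf{Case 2} simpler than \textbf{Case 1}.
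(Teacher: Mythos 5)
Your proposal is correct and follows essentially the same route as the paper: both hinge on the observation that $\gamma\in(1,2)$ and $\beta>1-\frac{\gamma}{2}$ force $2\gamma+2\beta>3$, so the dissipative bound in \eqref{ca2yeryet30} together with Sobolev embedding puts $\nabla^{2}b$ (or equivalently $\nabla^{2}(b\otimes b)$) in $L_{T}^{1}L^{\infty}$, after which the $L^{\infty}$ vorticity estimate closes by direct time integration. The only cosmetic difference is that the paper applies the embedding and the fractional product rule to $b\otimes b$ as a whole, whereas you expand $\nabla^{\perp}\nabla\cdot(b\otimes b)$ by Leibniz and treat the $\|\nabla b\|_{L^{\infty}}^{2}$ term via \eqref{ca2yeryet31}; both are valid.
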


\begin{proof}
The proof is different from Lemma \ref{RMHDY4} due to the fact that we only consider the case $\beta\leq\frac{1}{2}$. Fortunately, the bound \eqref{ca2yeryet30} allows us to conclude the desired estimate. It follows from \eqref{RMHDTQye07} that
\begin{align*}
\frac{d}{dt}\|\omega(t)\|_{L^{\infty}}\leq \|\nabla^{2}(bb)\|_{L^{\infty}}.
\end{align*}
Direct computations yield due to $\beta>1-\frac{\gamma}{2}$ with $\gamma\in(1,\,2)$
\begin{align*}
\|\nabla^{2}(bb)\|_{L^{\infty}}&\leq  C(\|bb\|_{L^{2}}+\|\Lambda^{2\gamma+2\beta}(bb)\|_{L^{2}})\nonumber\\
&\leq  C(\|b\|_{L^{\infty}}\|b\|_{L^{2}}+\|b\|_{L^{\infty}}
\|\Lambda^{2\gamma+2\beta}b\|_{L^{2}}).
\end{align*}
Thanks to \eqref{ca2yeryet30} and \eqref{ca2yeryet31}, it implies
$$\int_{0}^{T}{\|\nabla^{2}(bb)(\tau)\|_{L^{\infty}}\,d\tau}\leq
C(T,v_{0},b_{0}).$$
Consequently, we have
$$\sup_{t\in [0,\,T]} \|\omega(t)\|_{L^{\infty}} \leq
C(T,v_{0},b_{0}).$$
We thus complete the proof of Lemma \ref{ca2lena27}.
\end{proof}

\vskip .3in
With the above estimates of both \textbf{Case 1} and \textbf{Case 2} at our disposal, we are now ready to prove Theorem \ref{Th1}.
\begin{proof}[{Proof of Theorem \ref{Th1}}]
Applying $\Lambda^{s}$ to $\eqref{RMHDTQye07}$ and multiplying it by $\Lambda^{s}\omega$ yield
\begin{align}\label{RMHDTQye26}
\frac{1}{2}\frac{d}{dt}\|\Lambda^{s}\omega(t)\|_{L^2}^{2}=-\int_{\mathbb{R}^{2}}[\Lambda^{s},\,u\cdot\nabla]\omega \Lambda^{s}\omega\,dx +\int_{\mathbb{R}^{2}}\Lambda^{s}\nabla^{\perp}\nabla\cdot(b\otimes b)\Lambda^{s}\omega.
\end{align}
Applying $\Lambda^{s+2-\beta}$ to $\eqref{eqrtyett01}_{2}$ and multiplying it by $\Lambda^{s+2-\beta}b$, we get
\begin{align}\label{RMHDTQye27}
\frac{1}{2}\frac{d}{dt}\|\Lambda^{s+2-\beta}b(t)\|_{L^2}^{2}
+\|\Lambda^{s+2}b\|_{L^2}^{2}=&\int_{\mathbb{R}^{2}}{\Lambda^{s+2-\beta}(b\cdot \nabla u) \cdot \Lambda^{s+2-\beta}b\,dx} \nonumber\\&-\int_{\mathbb{R}^{2}}{\Lambda^{s+2-\beta}(u\cdot \nabla b) \cdot \Lambda^{s+2-\beta}b\,dx}.
\end{align}
Due to $\nabla\cdot u=0$, we have by \eqref{yfzdf68b} that
\begin{align}\label{RMHDTQye28}
\int_{\mathbb{R}^{2}}[\Lambda^{s},\,u\cdot\nabla]\omega \Lambda^{s}\omega\,dx &=
\int_{\mathbb{R}^{2}}[\Lambda^{s}\partial_{i},\,u_{i}]\omega \Lambda^{s}\omega\,dx\nonumber\\& \leq C\|[\Lambda^{s}\partial_{i},\,u_{i}]\omega \|_{L^{2}}\|\Lambda^{s}\omega\|_{L^{2}}\nonumber\\& \leq C(\|\nabla u\|_{L^{\infty}}\|\Lambda^{s}\omega\|_{L^{2}}+\|\omega\|_{L^{\infty}}\|\Lambda^{s+1}
u\|_{L^{2}})\|\Lambda^{s}\omega\|_{L^{2}}
\nonumber\\& \leq C(\|\nabla u\|_{L^{\infty}}\|\Lambda^{s}\omega\|_{L^{2}}+\|\omega\|_{L^{\infty}}\|\Lambda^{s}
\omega\|_{L^{2}})\|\Lambda^{s}\omega\|_{L^{2}}\nonumber\\& \leq C(\|\omega\|_{L^{2}}+\|\omega\|_{L^{\infty}})\|\Lambda^{s}\omega\|_{L^{2}}^{2},
\end{align}
where we have applied the facts due to $v=u+(-\Delta)^{\gamma}u$ with $\gamma>0$
\begin{align}\label{Rcdfr6yma1}
\|\nabla u\|_{L^{\infty}}\leq C(\|\omega\|_{L^{2}}+\|\omega\|_{L^{\infty}}),\qquad \|\Lambda^{s+\sigma}
u\|_{L^{2}}\leq \|\Lambda^{s}\omega\|_{L^{2}},\quad \sigma\leq 2\gamma+1.
\end{align}
The simple proof of \eqref{Rcdfr6yma1} will be given at the end of Appendix B.
In view of \eqref{yzzqq1}, it entails
\begin{align}\label{RMHDTQye29}
\int_{\mathbb{R}^{2}}\Lambda^{s}\nabla^{\perp}\nabla\cdot(b\otimes b)\Lambda^{s}\omega&\leq C\|\Lambda^{s}\nabla^{\perp}\nabla\cdot(b\otimes b)\|_{L^{2}}\|\Lambda^{s}\omega\|_{L^{2}}\nonumber\\&\leq C\|b\|_{L^{\infty}}\|\Lambda^{s+2}b\|_{L^{2}}\|\Lambda^{s}\omega\|_{L^{2}}
\nonumber\\&\leq\frac{1}{8}\|\Lambda^{s+2}b\|_{L^{2}}^{2}+ C\|b\|_{L^{\infty}}^{2} \|\Lambda^{s}\omega\|_{L^{2}}^{2}.
\end{align}
Noticing $\nabla\cdot b=0$, it follows from \eqref{yzzqq1} and \eqref{Rcdfr6yma1} that
\begin{align}\label{RMHDTQye30}&
\int_{\mathbb{R}^{2}}{\Lambda^{s+2-\beta}(b\cdot \nabla u) \cdot \Lambda^{s+2-\beta}b\,dx}
\nonumber\\&\leq C\|\Lambda^{s+2-2\beta}
(b\cdot \nabla u)\|_{L^{2}}\|\Lambda^{s+2}b\|_{L^{2}}
\nonumber\\&\leq C(\|\nabla u\|_{L^{\infty}}\|\Lambda^{s+2-2\beta}
b\|_{L^{2}}+\|b\|_{L^{\infty}}\|\Lambda^{s+3-2\beta}
u\|_{L^{2}})\|\Lambda^{s+2}b\|_{L^{2}}
\nonumber\\&\leq C(\|\nabla u\|_{L^{\infty}}\|b\|_{L^{2}}^{\frac{ \beta}{s+2-\beta}}\|\Lambda^{s+2-\beta}
b\|_{L^{2}}^{\frac{s+2-2\beta}{s+2-\beta}} +\|b\|_{L^{\infty}}\|\Lambda^{s}
\omega\|_{L^{2}})\|\Lambda^{s+2}b\|_{L^{2}}
\nonumber\\&\leq\frac{1}{8}\|\Lambda^{s+2}b\|_{L^{2}}^{2}+ C\|b\|_{L^{\infty}}^{2}\|\Lambda^{s}
\omega\|_{L^{2}}^{2}\nonumber\\&\quad +C(\|\omega\|_{L^{2}}+\|\omega\|_{L^{\infty}})^{2}
\|b\|_{L^{2}}^{\frac{2\beta}{s+2-\beta}}(1+\|\Lambda^{s+2-\beta}
b\|_{L^{2}}^{2}).
\end{align}
Since $\nabla\cdot u=0$, we also conclude
\begin{align}\label{RMHDTQye31}&
\int_{\mathbb{R}^{2}}{\Lambda^{s+2-\beta}(u\cdot \nabla b) \cdot \Lambda^{s+2-\beta}b\,dx}\nonumber\\&=\int_{\mathbb{R}^{2}}{
[\Lambda^{s+2-\beta},\,u\cdot \nabla]b\cdot \Lambda^{s+2-\beta}b\,dx}
\nonumber\\&\leq C\|[\Lambda^{s+2-\beta},\,u\cdot \nabla]b\|_{L^{2}}\|\Lambda^{s+2-\beta}b\|_{L^{2}}
\nonumber\\&\leq C(\|\nabla u\|_{L^{\infty}}\|\Lambda^{s+2-\beta}b\|_{L^{2}}+\|\nabla b\|_{L^{\infty}}\|\Lambda^{s+2-\beta}u\|_{L^{2}})\|\Lambda^{s+2-\beta}b\|_{L^{2}}
\nonumber\\&\leq C(\|\omega\|_{L^{2}}+\|\omega\|_{L^{\infty}})\|\Lambda^{s+2-\beta}b\|_{L^{2}}^{2}
+C\|\nabla b\|_{L^{\infty}}\|\Lambda^{s}\omega\|_{L^{2}} \|\Lambda^{s+2-\beta}b\|_{L^{2}}.
\end{align}
Summing up \eqref{RMHDTQye26}, \eqref{RMHDTQye27}, \eqref{RMHDTQye28}, \eqref{RMHDTQye29}, \eqref{RMHDTQye30} and \eqref{RMHDTQye31}, it directly gives
\begin{align}\label{RMHDTQye32}&
\frac{d}{dt}(\|\Lambda^{s}\omega(t)\|_{L^2}^{2}+\|\Lambda^{s+2-\beta}b(t)\|_{L^2}^{2})
+\|\Lambda^{s+2}b\|_{L^2}^{2}\nonumber\\&\leq  C(1+\|\omega\|_{L^{2}}^{2}+\|\omega\|_{L^{\infty}}^{2}+\|\nabla b\|_{L^{\infty}}+\|b\|_{L^{\infty}}^{2})(\|\Lambda^{s}\omega\|_{L^{2}}^{2}
+\|\Lambda^{s+2-\beta}b\|_{L^2}^{2}).
\end{align}
The estimates of Lemma \ref{RMHDY1}-Lemma \ref{RMHDY4} (or Lemma \ref{ca2lena25}-Lemma \ref{ca2lena27}) allow us to show
$$\int_{0}^{T}(1+\|\omega\|_{L^{2}}^{2}+\|\omega\|_{L^{\infty}}^{2}+\|\nabla b\|_{L^{\infty}}+\|b\|_{L^{\infty}}^{2})(t)\,dt\leq C(T,v_{0},b_{0}).$$
This together with the Gronwall inequality, we deduce from \eqref{RMHDTQye32} that
$$\sup_{t\in [0,\,T]}(\|\Lambda^{s}\omega(t)\|_{L^2}^{2}+\|\Lambda^{s+2-\beta}b(t)\|_{L^2}^{2})
+\int_{0}^{T}\|\Lambda^{s+2}b(t)\|_{L^2}^{2}\,dt\leq C(T,v_{0},b_{0}).$$
Consequently, this completes the proof of Theorem \ref{Th1}.
\end{proof}

\vskip .3in
\section{The proof of Theorem \ref{Th2} and  Theorem \ref{addTh2}}\setcounter{equation}{0}

We remark that when $\gamma=0$ and $\alpha=2$, the corresponding system admits a unique global regular solution, see \cite{TrYuna,Yamzaki14aml} for details. Consequently, it is sufficient to consider the case $\alpha+\gamma=2$ with $ \alpha\in[0,\,2)$.

\vskip .1in

\begin{center}
The proof of Theorem \ref{Th2}
\end{center}
\vskip .1in

In this case, we should keep in mind that $\alpha+\gamma=2$ with $ \alpha\in (0,\,2)$. Now we begin with the basic energy estimate.
\begin{lemma}\label{RMHDS31}
Assume $(v_{0},\,b_{0})$ satisfies the assumptions stated in
Theorem \ref{Th2}.
Then the corresponding solution $(v, b)$
of (\ref{logRG2DMHD}) admits the following bound
\begin{align*}
\|u(t)\|_{L^{2}}^{2}+\|\Lambda^{\gamma}\mathcal{L}u(t)\|_{L^{2}}^{2}
+\|b(t)\|_{L^{2}}^{2}&+\int_{0}^{t}{(\|\Lambda^{\alpha}\mathcal{L}u(\tau)\|_{L^{2}}^{2}+
\|\Lambda^{\alpha+\gamma}\mathcal{L}^{2}u(\tau)\|_{L^{2}}^{2})\,d\tau}\nonumber\\&\leq
C(v_{0},b_{0}).
\end{align*}
In particular, due to $\alpha+\gamma=2$, there holds true
\begin{align}\label{RMHDyet301}
\|u(t)\|_{L^{2}}^{2}+\|\Lambda^{\gamma}\mathcal{L}u(t)\|_{L^{2}}^{2}+\|b(t)\|_{L^{2}}^{2}+\int_{0}^{t}{\Big\|\frac{\Lambda^{2}}{g^{2}(\Lambda)} u(\tau)\Big\|_{L^{2}}^{2} \,d\tau} \leq
C(v_{0},b_{0}).
\end{align}
\end{lemma}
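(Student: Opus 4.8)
The plan is to reproduce the basic $L^2$ energy argument of Lemma \ref{RMHDY1}, the only new feature being the extra Fourier multiplier $\mathcal{L}^2$ appearing in the regularization and in the dissipation. First I would take the $L^2$-inner product of $(\ref{logRG2DMHD})_1$ with $u$ and of $(\ref{logRG2DMHD})_2$ with $b$, and add the two resulting identities. Since $\mathcal{L}$ is the Fourier multiplier with symbol $1/g(\xi)$ and $g$ is a real symmetric function bounded below by $1$, every operator $\mathcal{L}$, $\Lambda^{\sigma}$, $\mathcal{L}^2$ and all their products are self-adjoint Fourier multipliers that commute with one another; this self-adjointness is the only structural fact needed beyond what was already used in Lemma \ref{RMHDY1}.

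For the time-derivative and dissipative contributions I would substitute $v=u+(-\Delta)^{\gamma}\mathcal{L}^2u=u+\Lambda^{2\gamma}\mathcal{L}^2u$. Using $\Lambda^{2\gamma}\mathcal{L}^2=(\Lambda^{\gamma}\mathcal{L})^2$ and self-adjointness,
$$\int_{\mathbb{R}^2}\partial_t v\cdot u\,dx=\tfrac12\frac{d}{dt}\big(\|u\|_{L^2}^2+\|\Lambda^{\gamma}\mathcal{L}u\|_{L^2}^2\big),$$
while the dissipative term splits, upon substituting the same relation, as
$$\int_{\mathbb{R}^2}\Lambda^{2\alpha}\mathcal{L}^2v\cdot u\,dx=\|\Lambda^{\alpha}\mathcal{L}u\|_{L^2}^2+\|\Lambda^{\alpha+\gamma}\mathcal{L}^2u\|_{L^2}^2,$$
which produces exactly the two dissipation terms appearing in the claimed bound. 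Because the magnetic equation carries no dissipation, one simply has $\int_{\mathbb{R}^2}\partial_t b\cdot b\,dx=\tfrac12\frac{d}{dt}\|b\|_{L^2}^2$.

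Next I would dispose of the nonlinear and pressure terms by the same cancellations already recorded in (\ref{cancc01}) and (\ref{cancc02}). The pressure gradient integrates to zero against the divergence-free field $u$; the transport term $\int_{\mathbb{R}^2}(u\cdot\nabla)b\cdot b\,dx$ vanishes since $\nabla\cdot u=0$; the pair $\int_{\mathbb{R}^2}(u\cdot\nabla)v\cdot u\,dx+\int_{\mathbb{R}^2}\sum_{j}v_{j}\nabla u_{j}\cdot u\,dx$ cancels by (\ref{cancc02}); and the coupling pair $\int_{\mathbb{R}^2}(b\cdot\nabla)b\cdot u\,dx+\int_{\mathbb{R}^2}(b\cdot\nabla)u\cdot b\,dx$ cancels by (\ref{cancc01}). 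Collecting everything yields the exact energy identity
$$\frac12\frac{d}{dt}\big(\|u\|_{L^2}^2+\|\Lambda^{\gamma}\mathcal{L}u\|_{L^2}^2+\|b\|_{L^2}^2\big)+\|\Lambda^{\alpha}\mathcal{L}u\|_{L^2}^2+\|\Lambda^{\alpha+\gamma}\mathcal{L}^2u\|_{L^2}^2=0,$$
and integrating in time over $[0,t]$ gives the first stated bound. For the ``in particular'' assertion I would simply invoke $\alpha+\gamma=2$, so that $\Lambda^{\alpha+\gamma}\mathcal{L}^2=\Lambda^{2}\mathcal{L}^2=\frac{\Lambda^{2}}{g^2(\Lambda)}$, and retain only this single dissipation term.

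The computation is entirely routine and I do not anticipate a genuine obstacle; the identity is in fact an equality rather than an inequality. The only point deserving a word of care is the justification that $\mathcal{L}$, and hence $\Lambda^{2\gamma}\mathcal{L}^2$, is a self-adjoint, positive, bounded-below Fourier multiplier, so that the manipulations above (integration by parts in frequency, moving half of each multiplier onto each factor) are legitimate; this is immediate from $g(\xi)\geq1$ and the symmetry of $g$.
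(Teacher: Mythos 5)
Your proposal is correct and follows exactly the route the paper intends: the paper's own proof of Lemma \ref{RMHDS31} is simply the remark that it ``can be performed as that of Lemma \ref{RMHDY1},'' and your computation — substituting $v=u+\Lambda^{2\gamma}\mathcal{L}^{2}u$, using self-adjointness of the Fourier multipliers to produce $\frac12\frac{d}{dt}(\|u\|_{L^2}^2+\|\Lambda^{\gamma}\mathcal{L}u\|_{L^2}^2)$ and the two dissipation terms, and invoking the cancellations (\ref{cancc01})--(\ref{cancc02}) — is precisely that argument adapted to the logarithmic regularization. The final reduction via $\alpha+\gamma=2$ to $\Lambda^{2}/g^{2}(\Lambda)$ is also as the paper does it.
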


\begin{proof}
The proof can be performed as that of Lemma \ref{RMHDY1}. We thus omit the details.
\end{proof}

\vskip .1in
Our next goal is to derive the following key estimate.

\begin{lemma}\label{RMHDS32}
Assume $(v_{0},\,b_{0})$ satisfies the assumptions stated in
Theorem \ref{Th2}. Then the corresponding solution $(v, b)$
of (\ref{logRG2DMHD}) admits the following bound
\begin{align}\label{RMHDyet302}
\sup_{t\in [0,\,T]}(\|b(t)\|_{L^{\infty}}
+\|\nabla b(t)\|_{L^{2}}
+\|v(t)\|_{L^{2}})
+\int_{0}^{T}{\|\Lambda^{\alpha}\mathcal{L}v(\tau)\|_{L^{2}}^{2} \,d\tau}\leq
C(T,v_{0},b_{0}).
\end{align}
In particular, it holds
\begin{align}\label{sdfrD21rd}
 \int_{0}^{T}{(\|\nabla u(\tau)\|_{L^{\infty}}^{2}+\|\Delta u(\tau)\|_{L^{2}}^{2}) \,d\tau}\leq
C(T,v_{0},b_{0}).
\end{align}
\end{lemma}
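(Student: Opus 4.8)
The plan is to run a three-step scheme: first close a nonlinear differential inequality for the low-order quantity $V(t)$ introduced above, then replace the rough coefficients $\|\nabla u\|_{L^{\infty}}$ and $\|\Delta u\|_{L^{2}}$ by the dissipation via a logarithmic Littlewood–Paley interpolation, and finally run an Osgood/separation-of-variables argument tailored to the growth condition \eqref{logcobd}. Throughout, the $L^{2}_{T}$-dissipation $\big\|\tfrac{\Lambda^{2}}{g^{2}(\Lambda)}u\big\|_{L^{2}}$ and the $L^{\infty}_{t}$-energy from Lemma \ref{RMHDS31} are the only inputs beyond the equations.

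First I would derive the evolution of each piece of $V$. Since $b$ solves the forced transport equation $\partial_{t}b+(u\cdot\nabla)b=(b\cdot\nabla)u$ with no diffusion, propagating along characteristics gives $\frac{d}{dt}\|b\|_{L^{\infty}}\le\|\nabla u\|_{L^{\infty}}\|b\|_{L^{\infty}}$, hence the analogous inequality for $\|b\|_{L^{\infty}}^{3}$ and $\|b\|_{L^{\infty}}^{6}$. Differentiating the $b$-equation and testing against $\nabla b$, the advection term drops by $\nabla\cdot u=0$ and one is left with $\frac{d}{dt}\|\nabla b\|_{L^{2}}^{2}\le C\|\nabla u\|_{L^{\infty}}\|\nabla b\|_{L^{2}}^{2}+C\|b\|_{L^{\infty}}\|\Delta u\|_{L^{2}}\|\nabla b\|_{L^{2}}$. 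Testing $\eqref{logRG2DMHD}_{1}$ against $v$ produces $H(t)=\|\mathcal{L}\Lambda^{\alpha}v\|_{L^{2}}^{2}$ on the left (the advection term cancels and the dissipation contributes exactly $H$), while the stretching term $\sum_{j}v_{j}\nabla u_{j}$ and the Lorentz force yield $C\|\nabla u\|_{L^{\infty}}\|v\|_{L^{2}}^{2}+C\|b\|_{L^{\infty}}\|\nabla b\|_{L^{2}}\|v\|_{L^{2}}$. The cubic and sextic powers in $V$ are chosen precisely so that Young's inequality absorbs the cross terms into $V$, e.g. $\|b\|_{L^{\infty}}\|\nabla b\|_{L^{2}}^{2}\le\tfrac{1}{3}\|b\|_{L^{\infty}}^{3}+\tfrac{2}{3}\|\nabla b\|_{L^{2}}^{3}$ and $\|b\|_{L^{\infty}}\|\nabla b\|_{L^{2}}\|v\|_{L^{2}}\le C(\|b\|_{L^{\infty}}^{6}+\|\nabla b\|_{L^{2}}^{3}+\|v\|_{L^{2}}^{2})$. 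Combining these gives $\frac{d}{dt}V+H\le C(1+\|\nabla u\|_{L^{\infty}}+\|\Delta u\|_{L^{2}})V$.

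The main obstacle is the second step: estimating $\|\nabla u\|_{L^{\infty}}+\|\Delta u\|_{L^{2}}$ by the dissipation and $V$. Here I would split $u$ at a frequency $N=(e+V)^{1/(2(r_{1}-2))}$. On the low and intermediate shells $\Lambda\lesssim N$ I would write $\Lambda^{2}\Delta_{j}u=g^{2}(\Lambda)\Delta_{j}\big(\tfrac{\Lambda^{2}}{g^{2}(\Lambda)}u\big)$ and, since $g$ is non-decreasing, bound the symbol by $g^{2}(N)$; summing the $\sim\ln N$ dyadic blocks by Cauchy–Schwarz produces the factor $g^{2}(N)\sqrt{\ln N}$ times $\big\|\tfrac{\Lambda^{2}}{g^{2}(\Lambda)}u\big\|_{L^{2}}$, which is exactly the $L^{2}_{T}$ quantity of Lemma \ref{RMHDS31}. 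The high shells $\Lambda\gtrsim N$ I would control through the ``filtered'' structure $v=u+(-\Delta)^{\gamma}\mathcal{L}^{2}u$: the pointwise bound $\|v\|_{L^{2}}\le V^{1/2}$ supplies high-order derivatives of $u$ (of order $2\gamma$, up to the slowly varying factor $g^{2}$), which makes the tail summable, and choosing $N$ as a power of $e+V$ balances the two contributions — this is the role of the interpolation exponent $r_{1}$. The delicate point is to arrange the bookkeeping so that the outcome is exactly $\frac{d}{dt}V+H\le C\big(1+\big\|\tfrac{\Lambda^{2}}{g^{2}(\Lambda)}u\big\|_{L^{2}}\big)\,g^{2}\big[(e+V)^{1/(2(r_{1}-2))}\big]\sqrt{\ln(e+V)}\,(e+V)$, using that $g$ grows more slowly than any power so $g^{2}(N)$ and $\sqrt{\ln N}$ are dominated by $g^{2}$ and $\sqrt{\ln}$ of a power of $e+V$.

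Finally I would run the Osgood argument. Dividing by $\Phi(V):=g^{2}\big[(e+V)^{1/(2(r_{1}-2))}\big]\sqrt{\ln(e+V)}\,(e+V)$ and integrating in time, the right-hand side is $\le C(T)\big(1+\big\|\tfrac{\Lambda^{2}}{g^{2}(\Lambda)}u\big\|_{L^{2}_{T}L^{2}}\big)$, finite by Lemma \ref{RMHDS31}. For the left-hand side the substitution $\tau=(e+V)^{1/(2(r_{1}-2))}$ turns $\int\frac{dV}{\Phi(V)}$ into a constant multiple of $\int\frac{d\tau}{\tau\sqrt{\ln\tau}\,g^{2}(\tau)}$, which diverges as $V\to\infty$ precisely by \eqref{logcobd}; hence $\sup_{[0,T]}V(t)<\infty$ and $\int_{0}^{T}H\,dt<\infty$, which is \eqref{RMHDyet302}. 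The ``in particular'' bound \eqref{sdfrD21rd} then follows by feeding the now-bounded $V$ (so that $N$, $g^{2}(N)$ and $\sqrt{\ln(e+V)}$ are all bounded) back into the step-two estimate, giving $\|\nabla u\|_{L^{\infty}}+\|\Delta u\|_{L^{2}}\le C\big(1+\big\|\tfrac{\Lambda^{2}}{g^{2}(\Lambda)}u\big\|_{L^{2}}\big)$, whose squared time integral is finite once more by Lemma \ref{RMHDS31}.
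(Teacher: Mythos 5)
Your overall scheme --- the differential inequality $\frac{d}{dt}V+H\le C(1+\|\nabla u\|_{L^{\infty}}+\|\Delta u\|_{L^{2}})V$, the three-band Littlewood--Paley estimate of $\|\nabla u\|_{L^{\infty}}$ and $\|\Delta u\|_{L^{2}}$ with the threshold $2^{N}\approx(e+V)^{1/(2(r_{1}-2))}$, and the concluding Osgood argument --- is exactly the paper's. The gap is in your treatment of the high-frequency tail. You propose to close it with the bound $\|v\|_{L^{2}}\le V^{1/2}$ together with the relation $v=u+(-\Delta)^{\gamma}\mathcal{L}^{2}u$, which, as you say yourself, only supplies derivatives of $u$ of order up to $2\gamma$ (modulo the slowly varying $g^{2}$). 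But the tail
\begin{equation*}
\sum_{l\ge N}\|\Delta_{l}\nabla u\|_{L^{\infty}}\le C\sum_{l\ge N}2^{l(2-r_{1})}\|\Delta_{l}\Lambda^{r_{1}}u\|_{L^{2}}
\end{equation*}
is summable only if $r_{1}>2$, so your route requires $2<r_{1}<2\gamma$, i.e.\ $\gamma>1$, i.e.\ $\alpha<1$. Since the lemma must cover $\alpha+\gamma=2$ with $\alpha\in(0,2)$, your argument breaks down on the whole subrange $\alpha\in[1,2)$, where $\|v\|_{L^{2}}$ controls at most two derivatives of $u$ and the geometric sum diverges.

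The paper avoids this by measuring the tail not against $\|v\|_{L^{2}}$ but against the dissipation $H^{1/2}(t)=\|\mathcal{L}\Lambda^{\alpha}v\|_{L^{2}}$: the estimate \eqref{RMHDzxr2011} gives $\|\Lambda^{r_{1}}u\|_{L^{2}}\le C(\|u\|_{L^{2}}+\|\mathcal{L}\Lambda^{\alpha}v\|_{L^{2}})$ for every $r_{1}<\alpha+2\gamma=2+\gamma$, which leaves room for $r_{1}>2$ for all $\gamma>0$. The price is an extra term $C2^{N(2-r_{1})}H^{1/2}(t)V(t)\approx CH^{1/2}(t)(e+V(t))^{1/2}$ on the right-hand side, which is then absorbed as $\frac{1}{2}H(t)+C(e+V(t))$ into the left-hand side before the Osgood step. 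You should replace your tail estimate by this one (and correspondingly carry the $H^{1/2}$ term through Young's inequality); the rest of your argument, including the derivation of \eqref{sdfrD21rd} by feeding the bounded $V$ back into the frequency-split estimate, then goes through as in the paper.
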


\begin{proof}
We recall the second equation of \eqref{logRG2DMHD}, namely,
\begin{align}\label{RMHDyet303}
\partial_{t}b+(u\cdot\nabla)b=(b\cdot\nabla)u.
\end{align}
Multiplying \eqref{RMHDyet303} by $|b|^{q-2}b$ and integrating it over whole space, it implies
\begin{align}
\frac{1}{q}\frac{d}{dt}\|b(t)\|_{L^{q}}^{q}\leq \|\nabla u\|_{L^{\infty}}\|b\|_{L^{q}}^{q},\nonumber
\end{align}
which leads to
\begin{align}
\frac{d}{dt}\|b(t)\|_{L^{q}}\leq \|\nabla u\|_{L^{\infty}}\|b\|_{L^{q}}.\nonumber
\end{align}
By letting $q\rightarrow\infty$, we deduce
\begin{align*}
\frac{d}{dt}\|b(t)\|_{L^{\infty}}\leq C\|\nabla u\|_{L^{\infty}}\|b\|_{L^{\infty}},
\end{align*}
which implies
\begin{align}\label{RMHDyet304}
\frac{d}{dt}\|b(t)\|_{L^{\infty}}^{m}\leq C(m)\|\nabla u\|_{L^{\infty}}\|b\|_{L^{\infty}}^{m},\quad \forall\,m\in[2,\,\infty).
\end{align}
Applying $\nabla$ to \eqref{RMHDyet303}, we get
\begin{align}\label{RMHDyet305}
\partial_{t}\nabla b+(u\cdot\nabla)\nabla b=\nabla b\cdot\nabla u+b\cdot\nabla\nabla u-\nabla u\cdot\nabla b.
\end{align}
Taking the $L^2$-inner product of \eqref{RMHDyet305} with $\nabla b$, one arrives at
\begin{align*}
\frac{d}{dt}\|\nabla b(t)\|_{L^{2}}^{2}&\leq C\|\nabla u\|_{L^{\infty}}\|\nabla b\|_{L^{2}}^{2}+C\|b\|_{L^{\infty}}\|\Delta u\|_{L^{2}}\|\nabla b\|_{L^{2}}\nonumber\\&\leq C(\|\nabla u\|_{L^{\infty}}+\|\Delta u\|_{L^{2}})(\|\nabla b\|_{L^{2}}^{2}+\|b\|_{L^{\infty}}^{2}),
\end{align*}
which allows us to show
\begin{align}\label{RMHDyet306}
\frac{d}{dt}\|\nabla b(t)\|_{L^{2}}^{3} \leq C(\|\nabla u\|_{L^{\infty}}+\|\Delta u\|_{L^{2}})(\|\nabla b\|_{L^{2}}^{3}+\|b\|_{L^{\infty}}^{3}).
\end{align}
Taking the $L^2$-inner product of $\eqref{logRG2DMHD}_{1}$ with $v$, it gives that
\begin{align*}
\frac{1}{2}\frac{d}{dt}\|v(t)\|_{L^{2}}^{2}+\|\Lambda^{\alpha}\mathcal{L}v\|_{L^{2}}^{2}&=-\int_{\mathbb{R}^{2}}{\Big(\sum_{j=1}^{2}v_{j}\nabla u_{j}\Big) \cdot v\,dx}+\int_{\mathbb{R}^{2}}{(b\cdot \nabla b) \cdot v\,dx}\nonumber\\&\leq C\|\nabla u\|_{L^{\infty}}\|v\|_{L^{2}}^{2}+C\|b\|_{L^{\infty}}\|\nabla b\|_{L^{2}}\|v\|_{L^{2}}
\nonumber\\&\leq C\|\nabla u\|_{L^{\infty}}\|v\|_{L^{2}}^{2}+C\|v\|_{L^{2}}^{2} +C\|\nabla b\|_{L^{2}}^{3}
+C\|b\|_{L^{\infty}}^{6}.
\end{align*}
This allows us to deduce
\begin{align}\label{RMHDyet307}
\frac{d}{dt}\|v(t)\|_{L^{2}}^{2}+\|\Lambda^{\alpha}\mathcal{L}v\|_{L^{2}}^{2}\leq C(1+\|\nabla u\|_{L^{\infty}})\|v\|_{L^{2}}^{2}+C\|\nabla b\|_{L^{2}}^{3}
+C\|b\|_{L^{\infty}}^{6}.
\end{align}
Taking $m=3$ and $m=6$ in \eqref{RMHDyet304}, we then get by summing up \eqref{RMHDyet304}, \eqref{RMHDyet306} and \eqref{RMHDyet307}
\begin{align}\label{RMHDzxr21}
\frac{d}{dt}V(t)+H(t)\leq C(1+\|\nabla u\|_{L^{\infty}}+\|\Delta u\|_{L^{2}})V(t),
\end{align}
where
$$V(t):=\|b(t)\|_{L^{\infty}}^{3}+\|b(t)\|_{L^{\infty}}^{6}+
\|\nabla b(t)\|_{L^{2}}^{3}+\|v(t)\|_{L^{2}}^{2},\quad H(t):=\|\mathcal{L}\Lambda^{\alpha}v(t)\|_{L^{2}}^{2}.$$
According to the assumptions on $g$ (more precisely, $g$ grows logarithmically), one may conclude that for any fixed $\delta>0$, there exists $N=N(\delta)$ satisfying
$$g(r)\leq \widetilde{C}r^{\delta},\quad \forall\,r\geq N$$
with the constant $\widetilde{C}=\widetilde{C}(\delta)$. With this observation, one has for any $\delta>0$
\begin{align}\label{logtt002}
\|\Lambda^{\kappa}\mathcal{L}u\|_{L^{2}}^{2}&=
\int_{|\xi|< N(\delta)}{ \frac{|\xi|^{2\kappa}}{g^{2}(|\xi|)}|\widehat{u}(\xi)|^{2}\,d\xi}
+\int_{|\xi|\geq N(\delta)}{ \frac{|\xi|^{2\kappa}}{g^{2}(|\xi|)}|\widehat{u}(\xi)|^{2}\,d\xi}\nonumber\\
&\geq
\int_{|\xi|\geq N(\delta)}{ \frac{|\xi|^{2\kappa}}{\big[\widetilde{C}|\xi|^{\delta}
\big]^{2}}|\widehat{u}(\xi)|^{2}\,d\xi}\nonumber\\
&=
\int_{\mathbb{R}^{2}}{ \frac{|\xi|^{2\kappa}}{\big[\widetilde{C}|\xi|^{\delta}
\big]^{2}}|\widehat{u}(\xi)|^{2}\,d\xi}-\int_{|\xi|<N(\delta)}{ \frac{|\xi|^{2\kappa}}{\big[\widetilde{C}|\xi|^{\delta}
\big]^{2}}|\widehat{u}(\xi)|^{2}\,d\xi}
\nonumber\\
&\geq  C_{0}\|\Lambda^{\kappa-\delta}u\|_{L^{2}}^{2}-
\widetilde{C_{0}}\|u\|_{L^{2}}^{2},
\end{align}
where $C_{0}$ and $\widetilde{C_{0}}$ depend only on $\delta$.
Due to $v=u+(-\Delta)^{\gamma}\mathcal{L}^{2}u$, we may deduce by the argument used in proving \eqref{logtt002} that
\begin{align}\label{RMHDzxr2011}
\|\Lambda^{r_{1}}u\|_{L^{2}}\leq C(\|u\|_{L^{2}}+\|\mathcal{L}\Lambda^{\alpha}v\|_{L^{2}})
\end{align}
for any $r_{1}<\alpha+2\gamma\equiv 2+\gamma>2$.
Invoking the high-low frequency technique gives
\begin{eqnarray}
\|\nabla u\|_{L^{\infty}}\leq \|\Delta_{-1}\nabla u\|_{L^{\infty}}+\sum_{l=0}^{N-1}\|\Delta_{l}\nabla u\|_{L^{\infty}}+\sum_{l=N}^{\infty}\|\Delta_{l}\nabla u\|_{L^{\infty}},\nonumber
\end{eqnarray}
where $\Delta_{l}$ ($l=-1,0,1,\cdot\cdot\cdot$) denote the frequency
operator (see Appendix for details).
One obtains that by using the Bernstein inequality (see Lemma \ref{vfgty8xc})
$$
\|\Delta_{-1}\nabla u\|_{L^{\infty}}\leq C\|u\|_{L^{2}},$$
$$
\sum_{l=N}^{\infty}\|\Delta_{l}\nabla u\|_{L^{\infty}} \leq
 C \sum_{l=N}^{\infty}2^{2l}\|\Delta_{l}u\|_{L^{2}} =
C \sum_{l=N}^{\infty}2^{l(2-r_{1})}\|\Delta_{l}\Lambda^{r_{1}}u\|_{L^{2}} \leq  C2^{N(2-r_{1})}\|\Lambda^{r_{1}}u\|_{L^{2}},
$$
where $r_{1}>2$. By the Bernstein inequality again and the Plancherel theorem, one gets
\begin{align}
\sum_{l=0}^{N-1}\|\Delta_{l}\nabla u\|_{L^{\infty}} \leq C &\sum_{l=0}^{N-1}2^{2l}\|\Delta_{l} u\|_{L^{2}}\leq
C \sum_{l=0}^{N-1} \|\Delta_{l} \Lambda^{2}u\|_{L^{2}}\nonumber\\
 \leq &
C \sum_{l=0}^{N-1} \|\varphi(2^{-l}\xi) |\xi|^{2}\widehat{u}(\xi)\|_{L^{2}}\nonumber\\
 =&C\sum_{l=0}^{N-1} \Big\|\varphi(2^{-l}\xi)
g^{2}(|\xi|)
\frac{|\xi|^{2}}{ g^{2}(|\xi|) }\widehat{u}(\xi)\Big\|_{L^{2}}
\nonumber\\
 \leq &C\sum_{l=0}^{N-1} g^{2}(2^{l}) \Big\|\frac{|\xi|^{2}}{g^{2}(|\xi|) }\widehat{\Delta_{l}u}(\xi)\Big\|_{L^{2}}
\nonumber\\
 \leq &C\Big(\sum_{l=0}^{N-1} g^{4}(2^{l})\Big)^{\frac{1}{2}}\left(\sum_{l=0}^{N-1}
\Big\|\frac{|\xi|^{2}}{ g^{2}(|\xi|)}\widehat{\Delta_{l}u}(\xi)\Big\|_{L^{2}}^{2}\right)^{\frac{1}{2}}
\nonumber\\
\leq &Cg^{2}(2^{N})\Big(\sum_{l=1}^{N-1} 1\Big)^{\frac{1}{2}} \Big\|\frac{\Lambda^{2}}{ g^{2}(\Lambda)}u \Big\|_{L^{2}}
\nonumber\\
\leq &Cg^{2}(2^{N})\sqrt{N} \Big\|\frac{\Lambda^{2}}{ g^{2}(\Lambda)}u \Big\|_{L^{2}},\nonumber
\end{align}
where we have used the fact that $g$ is a non-decreasing function. Taking $2<r_{1}<2+\gamma$, we therefore obtain by summarizing the above three estimates
\begin{eqnarray}
\|\nabla u\|_{L^{\infty}}\leq C\|u\|_{L^{2}}+Cg^{2}(2^{N})\sqrt{N} \Big\|\frac{\Lambda^{2}}{ g^{2}(\Lambda)}u \Big\|_{L^{2}}+C2^{N(2-r_{1})}\|\mathcal{L}\Lambda^{\alpha}v\|_{L^{2}}.\nonumber
\end{eqnarray}
By the same argument, we have
\begin{eqnarray}
\|\Delta u\|_{L^{2}}\leq C\|u\|_{L^{2}}+Cg^{2}(2^{N})\sqrt{N} \Big\|\frac{\Lambda^{2}}{ g^{2}(\Lambda)}u \Big\|_{L^{2}}+C2^{N(2-r_{1})}\|\mathcal{L}\Lambda^{\alpha}v\|_{L^{2}}.\nonumber
\end{eqnarray}
It follows from \eqref{RMHDzxr21} and \eqref{RMHDzxr2011} that
\begin{eqnarray}
 \frac{d}{dt}V(t)+H(t)
 \leq    CV(t)+Cg^{2}(2^{N})\sqrt{N} \Big\|\frac{\Lambda^{2}}{ g^{2}(\Lambda)}u \Big\|_{L^{2}}V(t)+C2^{N(2-r_{1})}H^{\frac{1}{2}}(t)V(t).\nonumber
\end{eqnarray}
Taking $N$ such
$$2^{N}\approx \left(e+V(t)\right)^{\frac{1}{2(r_{1}-2)}},$$
it yields
\begin{align}
 \frac{d}{dt}V(t)+H(t)
  \leq&    C\left(1+\Big\|\frac{\Lambda^{2}}{ g^{2}(\Lambda)}u \Big\|_{L^{2}}\right)g^{2}\big[\left(e+V(t)\right)^{\frac{1}{2(r_{1}-2)}}\big]
  \sqrt{\ln\big(e+V(t)\big)} \big(e+V(t)\big)\nonumber\\& +CH^{\frac{1}{2}}(t)\big(e+V(t)\big)^{\frac{1}{2}}
\nonumber\\
 \leq&    C\left(1+\Big\|\frac{\Lambda^{2}}{ g^{2}(\Lambda)}u \Big\|_{L^{2}}\right)g^{2}\big[\left(e+V(t)\right)^{\frac{1}{2(r_{1}-2)}}\big]
 \sqrt{\ln\big(e+V(t)\big)} \big(e+V(t)\big)\nonumber\\ &+\frac{1}{2}H(t)+C\big(e+V(t)\big).\nonumber
\end{align}
Using the following fact
$$g^{2}\big[\left(e+V(t)\right)^{\frac{1}{2(r_{1}-2)}}\big]
 \sqrt{\ln\big(e+V(t)\big)} \big(e+V(t)\big)\geq1,$$
we thus have
\begin{align}
 \frac{d}{dt}V(t)+H(t)
 \leq C\left(1+\Big\|\frac{\Lambda^{2}}{ g^{2}(\Lambda)}u \Big\|_{L^{2}}\right)g^{2}\big[\left(e+V(t)\right)^{\frac{1}{2(r_{1}-2)}}\big]
 \sqrt{\ln\big(e+V(t)\big)} \big(e+V(t)\big).\nonumber
\end{align}
As a result, one deduces from the above inequality that
$$\int_{e+V(0)}^{e+V(t)}\frac{d\tau}{\tau\sqrt{\ln \tau} g^{2}(\tau^{\frac{1}{2(r_{1}-2)}})}\leq C
\int_{0}^{t}{
\left(1+\Big\|\frac{\Lambda^{2}}{ g^{2}(\Lambda)}u (\tau) \Big\|_{L^{2}}\right)\,d\tau}.$$
Making use of the following facts due to \eqref{logcobd} and \eqref{RMHDyet301}, respectively
$$\int_{e}^{\infty}\frac{d\tau}{\tau\sqrt{\ln \tau} g^{2}(\tau^{\frac{1}{2(r_{1}-2)}})}=\sqrt{2(r_{1}-2)}\int_{e^{\frac{1}{2(r_{1}-2)}}}^{\infty}\frac{d\tau}{\tau\sqrt{\ln \tau} g^{2}(\tau)}=\infty$$
and
$$\int_{0}^{T}{
\left(1+\Big\|\frac{\Lambda^{2}}{ g^{2}(\Lambda)}u (\tau) \Big\|_{L^{2}}\right)\,d\tau}\leq C(T,\,u_{0},\,w_{0}),$$
we have
$$\sup_{t\in [0,\,T]}V(t)+\int_{0}^{T}H(\tau)\,d\tau\leq C(T,v_{0},b_{0}),$$
which is nothing but \eqref{RMHDyet302}.
The desired \eqref{sdfrD21rd} follows from \eqref{RMHDzxr2011}.
We therefore finish the proof of Lemma \ref{RMHDS32}.
\end{proof}

\vskip .1in
Now we are going to improve the regularity of $b$, which will be used to derive \eqref{RMHDyet313}.

\begin{lemma}\label{zxdDSma}
Assume $(v_{0},\,b_{0})$ satisfies the assumptions stated in
Theorem \ref{Th2}. Then the corresponding solution $(v, b)$
of (\ref{logRG2DMHD}) admits the following bound for any $0<\nu<\gamma$
\begin{align}\label{ftyu45t1}
\sup_{t\in [0,\,T]}\|\Lambda^{1+\nu}b(t)\|_{L^{2}}\leq
C(T,v_{0},b_{0}).
\end{align}
\end{lemma}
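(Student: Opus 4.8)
The plan is to run an energy estimate at the regularity level $\Lambda^{1+\nu}$ directly on the magnetic equation \eqref{RMHDyet303}, exploiting the decisive fact that the gap $\nu<\gamma$ keeps every velocity norm I shall need strictly below the critical order $2+\gamma=\alpha+2\gamma$, at which the control coming from \eqref{RMHDzxr2011} and Lemma~\ref{RMHDS32} degenerates. Concretely, I would apply $\Lambda^{1+\nu}$ to $\partial_{t}b+(u\cdot\nabla)b=(b\cdot\nabla)u$, pair with $\Lambda^{1+\nu}b$ in $L^{2}$, and use $\nabla\cdot u=0$ to turn the transport contribution into a commutator:
\begin{align*}
\frac{1}{2}\frac{d}{dt}\|\Lambda^{1+\nu}b(t)\|_{L^{2}}^{2}
=-\int_{\mathbb{R}^{2}}[\Lambda^{1+\nu},\,u\cdot\nabla]b\cdot\Lambda^{1+\nu}b\,dx
+\int_{\mathbb{R}^{2}}\Lambda^{1+\nu}(b\cdot\nabla u)\cdot\Lambda^{1+\nu}b\,dx.
\end{align*}

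For the stretching term I would invoke the product estimate \eqref{yzzqq1} in the form $\|\Lambda^{1+\nu}(b\cdot\nabla u)\|_{L^{2}}\leq C(\|b\|_{L^{\infty}}\|\Lambda^{2+\nu}u\|_{L^{2}}+\|\nabla u\|_{L^{\infty}}\|\Lambda^{1+\nu}b\|_{L^{2}})$, and for the transport term the commutator estimate \eqref{yzz} in the form $\|[\Lambda^{1+\nu},u\cdot\nabla]b\|_{L^{2}}\leq C(\|\nabla u\|_{L^{\infty}}\|\Lambda^{1+\nu}b\|_{L^{2}}+\|\Lambda^{1+\nu}u\|_{L^{p_{1}}}\|\nabla b\|_{L^{p_{2}}})$ with $\frac{1}{p_{1}}+\frac{1}{p_{2}}=\frac12$. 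Because $2+\nu<2+\gamma$, Sobolev embedding controls both $\|\Lambda^{2+\nu}u\|_{L^{2}}$ and $\|\Lambda^{1+\nu}u\|_{L^{p_{1}}}\leq C\|\Lambda^{2+\nu-2/p_{1}}u\|_{L^{2}}$ by $\|\Lambda^{r_{1}}u\|_{L^{2}}$ with $r_{1}<2+\gamma$, which is square-integrable in time by \eqref{RMHDzxr2011} combined with the bound $\int_{0}^{T}\|\Lambda^{\alpha}\mathcal{L}v\|_{L^{2}}^{2}\,d\tau\leq C$ from Lemma~\ref{RMHDS32}. Choosing $p_{2}$ slightly larger than $2$ and interpolating $\|\nabla b\|_{L^{p_{2}}}$ by Gagliardo--Nirenberg between the uniformly bounded $\|\nabla b\|_{L^{2}}$ and $\|\Lambda^{1+\nu}b\|_{L^{2}}$ (with interpolation exponent strictly less than one), and using $\|b\|_{L^{\infty}}\leq C$, I expect to reach a differential inequality of the form
\begin{align*}
\frac{d}{dt}\|\Lambda^{1+\nu}b(t)\|_{L^{2}}^{2}\leq C\big(1+\|\nabla u\|_{L^{\infty}}+\|\Lambda^{2+\nu}u\|_{L^{2}}^{2}\big)\big(1+\|\Lambda^{1+\nu}b\|_{L^{2}}^{2}\big).
\end{align*}
Since $\int_{0}^{T}(\|\nabla u\|_{L^{\infty}}^{2}+\|\Lambda^{2+\nu}u\|_{L^{2}}^{2})\,d\tau\leq C(T,v_{0},b_{0})$ by \eqref{sdfrD21rd} and the discussion above, the prefactor lies in $L^{1}([0,T])$, and Gr\"onwall's inequality yields $\sup_{t\in[0,T]}\|\Lambda^{1+\nu}b(t)\|_{L^{2}}\leq C(T,v_{0},b_{0})$, which is \eqref{ftyu45t1}.

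The main obstacle, and the step demanding care, is the balancing inside the commutator and product estimates. At this stage $\|\nabla b\|_{L^{\infty}}$ is \emph{not} yet available --- it is produced only later in Lemma~\ref{RMHDS33} --- so I must never let the magnetic field appear above regularity $1+\nu$, interpolating all higher magnetic norms against the already-known uniform bounds on $\|b\|_{L^{\infty}}$ and $\|\nabla b\|_{L^{2}}$, while simultaneously forcing every velocity norm to stay strictly below the threshold $2+\gamma$. This bookkeeping closes precisely because of the room afforded by $\nu<\gamma$: the top-order magnetic derivative can be absorbed into the time-integrable velocity regularity $\Lambda^{2+\nu}u$, and the remaining terms feed linearly into the Gr\"onwall structure.
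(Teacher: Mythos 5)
Your proposal is correct and follows essentially the same route as the paper: an $H^{1+\nu}$ energy estimate on the $b$-equation, the product estimate \eqref{yzzqq1} placing the top derivative on $u$ at order $2+\nu<2+\gamma$ so that \eqref{RMHDzxr2011} and the time-integrability from \eqref{RMHDyet302}, \eqref{sdfrD21rd} close the Gr\"onwall argument. The only (harmless) deviation is in the commutator term, where the paper uses the variant \eqref{yfzdf68b} on $[\Lambda^{1+\nu}\partial_i,u_i]b_j$ to get $\|\nabla u\|_{L^\infty}\|\Lambda^{1+\nu}b\|_{L^2}+\|b\|_{L^\infty}\|\Lambda^{2+\nu}u\|_{L^2}$ directly, whereas you use \eqref{yzz} and an extra Gagliardo--Nirenberg interpolation of $\|\nabla b\|_{L^{p_2}}$; both work.
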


\begin{proof}
Applying $\Lambda^{1+\nu}$ to \eqref{RMHDyet303} and multiplying it by $\Lambda^{1+\nu}b$, we obtain
\begin{align*}
\frac{1}{2}\frac{d}{dt}\|\Lambda^{1+\nu}b(t)\|_{L^{2}}^{2}
&=-\int_{\mathbb{R}^{2}}{[\Lambda^{1+\nu}, u\cdot \nabla ]b \cdot \Lambda^{1+\nu} b\,dx}+\int_{\mathbb{R}^{2}}{\Lambda^{1+\nu}(b\cdot \nabla u) \cdot \Lambda^{1+\nu} b\,dx}.
\end{align*}
Thanks to $\nabla\cdot u=0$, we get by \eqref{yfzdf68b} that
\begin{align*}
\int_{\mathbb{R}^{2}}{[\Lambda^{1+\nu}, u\cdot \nabla ]b \cdot \Lambda^{1+\nu} b\,dx}&= \int_{\mathbb{R}^{2}}{[\Lambda^{1+\nu}\partial_{i}, u_{i}]b_{j}  \Lambda^{1+\nu} b_{j}\,dx}\nonumber\\&\leq
C\|[\Lambda^{1+\nu}\partial_{i}, u_{i}]b_{j}\|_{L^{2}}\|\Lambda^{1+\nu} b\|_{L^{2}}
\nonumber\\&\leq C(\|\nabla u\|_{L^{\infty}}\|\Lambda^{1+\nu} b\|_{L^{2}}+\| b\|_{L^{\infty}}\|\Lambda^{2+\nu} u\|_{L^{2}})\|\Lambda^{1+\nu} b\|_{L^{2}}\nonumber\\& \leq
C\|\nabla u\|_{L^{\infty}}\|\Lambda^{1+\nu} b\|_{L^{2}}^{2}\nonumber\\&\quad +C\| b\|_{L^{\infty}}(\|u\|_{L^{2}}+\|\mathcal{L}\Lambda^{\alpha}v\|_{L^{2}})
\|\Lambda^{1+\nu} b\|_{L^{2}},
\end{align*}
where we have used the following fact for any $0<\nu<\gamma$ (see \eqref{RMHDzxr2011} for details)
$$\|\Lambda^{2+\nu} u\|_{L^{2}}\leq C(\|u\|_{L^{2}}+\|\mathcal{L}\Lambda^{\alpha}v\|_{L^{2}}).$$
In view of \eqref{yzzqq1} and the above fact, it yields
\begin{align*}
\int_{\mathbb{R}^{2}}{\Lambda^{1+\nu}(b\cdot \nabla u) \cdot \Lambda^{1+\nu} b\,dx}&\leq
C\|\Lambda^{1+\nu}(b\cdot \nabla u)\|_{L^{2}}\|\Lambda^{1+\nu} b\|_{L^{2}}
\nonumber\\&\leq C(\|\nabla u\|_{L^{\infty}}\|\Lambda^{1+\nu} b\|_{L^{2}}+\| b\|_{L^{\infty}}\|\Lambda^{2+\nu} u\|_{L^{2}})\|\Lambda^{1+\nu} b\|_{L^{2}}
\nonumber\\& \leq
C\|\nabla u\|_{L^{\infty}}\|\Lambda^{1+\nu} b\|_{L^{2}}^{2}\nonumber\\&\quad +C\| b\|_{L^{\infty}}(\|u\|_{L^{2}}+\|\mathcal{L}\Lambda^{\alpha}v\|_{L^{2}})
\|\Lambda^{1+\nu} b\|_{L^{2}}.
\end{align*}
Consequently, we deduce
\begin{align*}
\frac{d}{dt}\|\Lambda^{1+\nu}b(t)\|_{L^{2}}^{2}\leq
C\|\nabla u\|_{L^{\infty}}\|\Lambda^{1+\nu} b\|_{L^{2}}^{2} +C\| b\|_{L^{\infty}}(\|u\|_{L^{2}}+\|\mathcal{L}\Lambda^{\alpha}v\|_{L^{2}})
\|\Lambda^{1+\nu} b\|_{L^{2}}.
\end{align*}
By the Gronwall inequality, we conclude by using \eqref{sdfrD21rd} and \eqref{RMHDyet302} that
$$\sup_{t\in [0,\,T]}\|\Lambda^{1+\nu}b(t)\|_{L^{2}}\leq
C(T,v_{0},b_{0}).$$
This ends the proof of Lemma \ref{zxdDSma}.
\end{proof}

\vskip .1in
The following lemma concerns the higher regularity of $v$.

\begin{lemma}\label{RMHDS33}
Assume $(v_{0},\,b_{0})$ satisfies the assumptions stated in
Theorem \ref{Th2}. Then the corresponding solution $(v, b)$
of (\ref{logRG2DMHD}) admits the following bound
\begin{align}\label{RMHDyet313}
 \sup_{t\in [0,\,T]}\|\Lambda^{\alpha}v(t)\|_{L^{2}}
+\int_{0}^{T}{\|\Lambda^{2\alpha}\mathcal{L}v(\tau)\|_{L^{2}}^{2} \,d\tau}\leq
C(T,v_{0},b_{0}).
\end{align}
Moreover, we have
\begin{align}\label{RMHDyet315}
 \sup_{t\in [0,\,T]}\|\nabla b(t)\|_{L^{\infty}}\leq C(T,v_{0},b_{0}).
\end{align}
\end{lemma}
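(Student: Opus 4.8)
The plan is to run an $L^2$ energy estimate on the equation for $\Lambda^{\alpha}v$, close it by Gronwall's inequality to obtain \eqref{RMHDyet313}, and then read off \eqref{RMHDyet315} from the dissipation integral. First I would apply $\Lambda^{\alpha}$ to $\eqref{logRG2DMHD}_{1}$ and pair with $\Lambda^{\alpha}v$ in $L^{2}$. The pressure term drops by $\nabla\cdot v=0$, and since $\mathcal{L}$ is a self-adjoint Fourier multiplier the dissipative contribution $\int_{\mathbb{R}^{2}}\Lambda^{\alpha}\big((-\Delta)^{\alpha}\mathcal{L}^{2}v\big)\cdot\Lambda^{\alpha}v\,dx$ equals exactly $\|\Lambda^{2\alpha}\mathcal{L}v\|_{L^{2}}^{2}$, which is the quantity we must control. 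This leaves the transport term $I=-\int[\Lambda^{\alpha},u\cdot\nabla]v\cdot\Lambda^{\alpha}v\,dx$ (using $\nabla\cdot u=0$ to kill the non-commutator piece), the stretching term $II=-\int\Lambda^{\alpha}\big(\sum_{j}v_{j}\nabla u_{j}\big)\cdot\Lambda^{\alpha}v\,dx$, and the Lorentz term $III=\int\Lambda^{\alpha}(b\cdot\nabla b)\cdot\Lambda^{\alpha}v\,dx$.

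For $I$ I would invoke the commutator estimate \eqref{yfzdf68b}: its principal part is of size $\|\nabla u\|_{L^{\infty}}\|\Lambda^{\alpha}v\|_{L^{2}}^{2}$, and since $\int_{0}^{T}\|\nabla u\|_{L^{\infty}}^{2}\,d\tau<\infty$ by \eqref{sdfrD21rd} this coefficient is time-integrable and harmless for Gronwall; the remaining lower-order commutator piece I would absorb by interpolating $\nabla v$ between $\|\Lambda^{\alpha}v\|_{L^{2}}$ and the dissipation, using that $u$ is smoother than $v$ through \eqref{RMHDzxr2011}.

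The decisive device for $II$ and especially $III$ is to integrate by parts so that both copies of $\Lambda^{\alpha}$ land on $v$ and then dualize against the dissipation: writing $III=\int_{\mathbb{R}^{2}}\nabla\cdot(b\otimes b)\cdot\Lambda^{2\alpha}v\,dx=\int_{\mathbb{R}^{2}}g(\Lambda)\nabla\cdot(b\otimes b)\cdot\Lambda^{2\alpha}\mathcal{L}v\,dx$ (since $\Lambda^{2\alpha}v=g(\Lambda)\Lambda^{2\alpha}\mathcal{L}v$ and $g(\Lambda)$ is self-adjoint), a Young splitting leaves $\tfrac18\|\Lambda^{2\alpha}\mathcal{L}v\|_{L^{2}}^{2}+C\|g(\Lambda)\nabla\cdot(b\otimes b)\|_{L^{2}}^{2}$. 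Because $g$ grows slower than any power, $g(\Lambda)$ costs only $\Lambda^{\epsilon}$ for arbitrarily small $\epsilon>0$, so the last term is bounded by $C\|\Lambda^{1+\epsilon}(b\otimes b)\|_{L^{2}}^{2}\le C\|b\|_{L^{\infty}}^{2}\|\Lambda^{1+\epsilon}b\|_{L^{2}}^{2}$, which is finite by \eqref{RMHDyet302} and \eqref{ftyu45t1} provided $\epsilon<\gamma$. The same manoeuvre handles $II$, the price now being $\|\Lambda^{1+\epsilon}u\|_{L^{2}}$, controlled by \eqref{RMHDzxr2011}. Summing and applying Gronwall with these time-integrable coefficients gives \eqref{RMHDyet313}. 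I expect the Lorentz term to be the main obstacle: because $\mathcal{L}$ (symbol $1/g\le1$) weakens the dissipation, one cannot absorb a full $\|\Lambda^{\alpha+1}b\|_{L^{2}}$ into it, and that norm is in any case unavailable once $\alpha\ge1$ (where $\gamma\le1$). The escape is exactly the duality-plus-slow-growth argument, which lowers the required regularity of $b$ to $1+\epsilon$ with $\epsilon<\gamma$; this is where the logarithmic nature of $g$ and the balance $\alpha+\gamma=2$ genuinely enter.

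Finally, for \eqref{RMHDyet315} I would exploit the dissipation integral just established. Transferring $\int_{0}^{T}\|\Lambda^{2\alpha}\mathcal{L}v\|_{L^{2}}^{2}\,d\tau<\infty$ through $v=u+(-\Delta)^{\gamma}\mathcal{L}^{2}u$, and using $2\alpha+2\gamma=4$ together with the fact that $\mathcal{L}$ again costs only $\Lambda^{\epsilon}$, yields $u\in L^{2}_{T}H^{4-\epsilon}$, hence $\nabla^{2}u\in L^{2}_{T}L^{\infty}$ by the two-dimensional Sobolev embedding $H^{2-\epsilon}\hookrightarrow L^{\infty}$. Feeding this, together with $\sup_{[0,T]}\|\nabla u\|_{L^{\infty}}<\infty$ (which follows from \eqref{RMHDyet313} since $\sup_{t}\|\Lambda^{\alpha}v\|_{L^{2}}$ controls $\sup_{t}\|u\|_{H^{4-\alpha-\epsilon}}$ with $4-\alpha-\epsilon>2$) and $\sup_{[0,T]}\|b\|_{L^{\infty}}<\infty$ from \eqref{RMHDyet302}, into the transport equation \eqref{RMHDyet305} for $\nabla b$ and running Gronwall along the flow, I would obtain $\sup_{t\in[0,T]}\|\nabla b(t)\|_{L^{\infty}}\le C(T,v_{0},b_{0})$, the finiteness of $\|\nabla b_{0}\|_{L^{\infty}}$ being guaranteed by the assumed regularity $b_{0}\in H^{\rho+1-\alpha'}$ with $\rho>\max\{2,1+\alpha'\}$.
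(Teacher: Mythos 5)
Your proposal is correct and follows essentially the same route as the paper: an energy estimate for $\Lambda^{\alpha}v$ in which the stretching, Lorentz and commutator terms are dualized against the dissipation by exploiting that $g(\Lambda)$ costs only an arbitrarily small power of $\Lambda$ (the paper packages this as the inequality $\|\Lambda^{r_{3}}v\|_{L^{2}}\leq C(\|v\|_{L^{2}}+\|\Lambda^{2\alpha}\mathcal{L}v\|_{L^{2}})$ for $r_{3}<2\alpha$), with the Lorentz term controlled through Lemma \ref{zxdDSma} and the coefficients made time-integrable via \eqref{sdfrD21rd}, followed by Gronwall. The derivation of \eqref{RMHDyet315} from the transport equation for $\nabla b$, using $\|\nabla^{2}u\|_{L^{\infty}}\lesssim\|u\|_{L^{2}}+\|\Lambda^{2\alpha}\mathcal{L}v\|_{L^{2}}$ and the dissipation integral just obtained, is likewise the paper's argument.
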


\begin{proof}
Multiplying $\eqref{RG2DMHD}_{1}$ by $\Lambda^{2\alpha}v$ and integrating it over $\mathbb{R}^{2}$, we get
\begin{align*}
\frac{1}{2}\frac{d}{dt}\|\Lambda^{\alpha}v(t)\|_{L^{2}}^{2}
+\|\Lambda^{2\alpha}\mathcal{L}v\|_{L^{2}}^{2}&=-\int_{\mathbb{R}^{2}}{\Big(\sum_{j=1}^{2}v_{j}\nabla u_{j}\Big) \cdot \Lambda^{2\alpha}v\,dx}+\int_{\mathbb{R}^{2}}{(b\cdot \nabla b) \cdot \Lambda^{2\alpha}v\,dx}\nonumber\\&\quad - \int_{\mathbb{R}^{2}}{[\Lambda^{\alpha},\,u\cdot \nabla]v \cdot \Lambda^{\alpha}v\,dx}.
\end{align*}
Thanks to \eqref{logtt002}, we have for any $r_{3}<2\alpha$
\begin{align}\label{fgy7ubvc1}
\|\Lambda^{r_{3}}v\|_{L^{2}}\leq C(\|v\|_{L^{2}}+\|\Lambda^{2\alpha}\mathcal{L}v\|_{L^{2}}).
\end{align}
Taking $0<\delta<\min\{1,\,\alpha\}$ and using \eqref{fgy7ubvc1}, the first term can be bounded by
\begin{align*}
\int_{\mathbb{R}^{2}}{\Big(\sum_{j=1}^{2}v_{j}\nabla u_{j}\Big) \cdot \Lambda^{2\alpha}v\,dx}&\leq C\|\Lambda^{\delta}(v\nabla u)\|_{L^{2}}\|\Lambda^{2\alpha-\delta}v\|_{L^{2}}\nonumber\\&\leq
C(\|\nabla u\|_{L^{\infty}}\|\Lambda^{\delta}v\|_{L^{2}}+\|\Lambda^{\delta}\nabla u\|_{L^{\frac{2}{\delta}}}\|v\|_{L^{\frac{2}{1-\delta}}})\|\Lambda^{2\alpha-\delta}
v\|_{L^{2}}\nonumber\\&\leq
C(\|\nabla u\|_{L^{\infty}}+\|\Delta u\|_{L^{2}})
\|v\|_{L^{2}}^{\frac{\alpha-\delta}{\alpha}}\|\Lambda^{\alpha}
v\|_{L^{2}}^{\frac{\delta}{\alpha}}\nonumber\\&\quad\times
(\|v\|_{L^{2}}+\|\Lambda^{2\alpha}\mathcal{L}v\|_{L^{2}})\nonumber\\&\leq
\frac{1}{8}\|\Lambda^{2\alpha}\mathcal{L}v\|_{L^{2}}^{2}+C(1+\|\nabla u\|_{L^{\infty}}^{2}+\|\Delta u\|_{L^{2}}^{2})(1+\|\Lambda^{\alpha}
v\|_{L^{2}}^{2}).
\end{align*}
On the other hand, one may obtain
\begin{align*}\int_{\mathbb{R}^{2}}{(b\cdot \nabla b) \cdot \Lambda^{2\alpha}v\,dx}&\leq C\|\Lambda^{1+\nu}(bb)\|_{L^{2}}\|\Lambda^{2\alpha-\nu}v\|_{L^{2}}\nonumber\\&\leq
C\|b\|_{L^{\infty}}\|\Lambda^{1+\nu}b\|_{L^{2}}
(\|v\|_{L^{2}}+\|\Lambda^{2\alpha}\mathcal{L}v\|_{L^{2}})\nonumber\\&\leq
\frac{1}{8}\|\Lambda^{2\alpha}\mathcal{L}v\|_{L^{2}}^{2}+C\|b\|_{L^{\infty}}^{2}
\|\Lambda^{1+\nu}b\|_{L^{2}}^{2}+C\|b\|_{L^{\infty}} \|v\|_{L^{2}}
\|\Lambda^{1+\nu}b\|_{L^{2}},
\end{align*}
where $\nu$ is given by \eqref{ftyu45t1}.
For the last term, due to $\nabla\cdot u=0$, we have by \eqref{yfzdf68b} that
\begin{align*}
- \int_{\mathbb{R}^{2}}{[\Lambda^{\alpha},\,u\cdot \nabla]v \cdot \Lambda^{\alpha}v\,dx} &=-
\int_{\mathbb{R}^{2}}[\Lambda^{\alpha}\partial_{i},\,u_{i}]v_{j} \Lambda^{\alpha}v_{j}\,dx\nonumber\\& \leq C\|[\Lambda^{\alpha}\partial_{i},\,u_{i}]v_{j} \|_{L^{2}}\|\Lambda^{\alpha}v\|_{L^{2}}\nonumber\\& \leq C(\|\nabla u\|_{L^{\infty}}\|\Lambda^{\alpha}v\|_{L^{2}}+\|v\|_{L^{p_{1}}}\|\Lambda^{\alpha+1}
u\|_{L^{\frac{2p_{1}}{p_{1}-2}}})\|\Lambda^{\alpha}v\|_{L^{2}}
\nonumber\\& \leq C(\|v\|_{
L^{2}}+\|\Lambda^{\alpha}\mathcal{L}v\|_{
L^{2}})\|\Lambda^{2\alpha}
\mathcal{L}v\|_{L^{2}} \|\Lambda^{\alpha}v\|_{L^{2}}
\nonumber\\&\quad +C\|\nabla u\|_{L^{\infty}}\|\Lambda^{\alpha}v\|_{L^{2}}^{2}
\nonumber\\& \leq \frac{1}{8}\|\Lambda^{2\alpha}\mathcal{L}v\|_{L^{2}}^{2}+C(\|\nabla u\|_{L^{\infty}}+\|v\|_{L^{2}}^{2}+\|\Lambda^{\alpha}\mathcal{L}v\|_{
L^{2}}^{2})\|\Lambda^{\alpha}v\|_{L^{2}}^{2},
\end{align*}
where $p_{1}>2$ satisfies
$$\frac{1-\alpha}{2}<\frac{1}{p_{1}}< \frac{\alpha+2\gamma-1}{2}.$$
Combining all the above estimates implies that
\begin{align*}
 \frac{d}{dt}\|\Lambda^{\alpha}v(t)\|_{L^{2}}^{2}
+\|\Lambda^{2\alpha}\mathcal{L}v\|_{L^{2}}^{2}&\leq C(1+\|\nabla u\|_{L^{\infty}}^{2}+\|\Delta u\|_{L^{2}}^{2}+\|\Lambda^{\alpha}\mathcal{L}v\|_{
L^{2}}^{2})(1+\|\Lambda^{\alpha}
v\|_{L^{2}}^{2})\nonumber\\&\quad +C\|b\|_{L^{\infty}}^{2}
\|\Lambda^{1+\nu}b\|_{L^{2}}^{2}+C\|b\|_{L^{\infty}} \|v\|_{L^{2}}
\|\Lambda^{1+\nu}b\|_{L^{2}}.
\end{align*}
Thanks to \eqref{RMHDyet302}, \eqref{sdfrD21rd} and \eqref{ftyu45t1}, we have by using the Gronwall inequality
$$  \sup_{t\in [0,\,T]}\|\Lambda^{\alpha}v(t)\|_{L^{2}}^{2}
+\int_{0}^{T}{\|\Lambda^{2\alpha}\mathcal{L}v(\tau)\|_{L^{2}}^{2} \,d\tau}\leq
C(T,v_{0},b_{0}).$$
It follows from \eqref{RMHDyet305} and \eqref{fgy7ubvc1} that
\begin{align}\label{ertcxp316}
\frac{d}{dt}\|\nabla b(t)\|_{L^{\infty}} &\leq C\|\nabla u\|_{L^{\infty}}\|\nabla b\|_{L^{\infty}} +C\|b\|_{L^{\infty}}\|\nabla^{2} u\|_{L^{\infty}}\|\nabla b\|_{L^{\infty}} \nonumber\\&\leq C\|\nabla u\|_{L^{\infty}}\|\nabla b\|_{L^{\infty}} +C\|b\|_{L^{\infty}}
(\|u\|_{L^{2}}+\|\Lambda^{2\alpha}\mathcal{L}v\|_{L^{2}})\|\nabla b\|_{L^{\infty}}.
\end{align}
Recalling \eqref{RMHDyet302} and \eqref{sdfrD21rd}, one deduces by the Gronwall inequality
$$\sup_{t\in [0,\,T]}\|\nabla b(t)\|_{L^{\infty}}\leq C(T,v_{0},b_{0}).$$
Consequently, the proof of Lemma \ref{RMHDS33} is completed.
\end{proof}

\vskip .1in
The following lemma concerns the $L^2$-estimate of $\omega$.
\begin{lemma}\label{RMHDS34}
Assume $(v_{0},\,b_{0})$ satisfies the assumptions stated in
Theorem \ref{Th2}. Then the corresponding solution $(v, b)$
of (\ref{logRG2DMHD}) admits the following bound
\begin{align}\label{Rfdty78i17}
\sup_{t\in [0,\,T]}\|\omega(t)\|_{L^{2}}^{2}
+\int_{0}^{T}{\|\Lambda^{\alpha}\mathcal{L}\omega(\tau)\|_{L^{2}}^{2} \,d\tau}\leq
C(T,v_{0},b_{0}).
\end{align}
\end{lemma}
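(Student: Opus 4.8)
The plan is to pass to the vorticity formulation and run an $L^{2}$ energy estimate in which the only genuinely new difficulty is that the dissipation available for $\omega$ is the logarithmically weakened quantity $\|\Lambda^{\alpha}\mathcal{L}\omega\|_{L^{2}}$ rather than $\|\Lambda^{\alpha}\omega\|_{L^{2}}$. First I would derive the vorticity equation: applying $\nabla^{\perp}\cdot$ to $\eqref{logRG2DMHD}_{1}$ and using $\nabla\cdot u=0$ together with the algebraic cancellation $\nabla^{\perp}\cdot\big[(u\cdot\nabla)v+\sum_{j=1}^{2}v_{j}\nabla u_{j}\big]=(u\cdot\nabla)\omega$ (valid for this filtered nonlinearity exactly as in $\eqref{RMHDTQye07}$), while the gradient term drops, yields
$$\partial_{t}\omega+(u\cdot\nabla)\omega+\Lambda^{2\alpha}\mathcal{L}^{2}\omega=\nabla^{\perp}\nabla\cdot(b\otimes b).$$
Taking the $L^{2}$-inner product with $\omega$, the transport term vanishes by incompressibility and the dissipation contributes $\|\Lambda^{\alpha}\mathcal{L}\omega\|_{L^{2}}^{2}$, so that
$$\frac{1}{2}\frac{d}{dt}\|\omega(t)\|_{L^{2}}^{2}+\|\Lambda^{\alpha}\mathcal{L}\omega\|_{L^{2}}^{2}=\int_{\mathbb{R}^{2}}\nabla^{\perp}\nabla\cdot(b\otimes b)\cdot\omega\,dx.$$
Everything then reduces to absorbing the right-hand side into the dissipation.

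Next I would bound the forcing by duality against the dissipation. Writing $f:=\nabla^{\perp}\nabla\cdot(b\otimes b)$ and $\omega=\Lambda^{-\alpha}\mathcal{L}^{-1}(\Lambda^{\alpha}\mathcal{L}\omega)$, self-adjointness of $\Lambda^{-\alpha}\mathcal{L}^{-1}$ gives
$$\Big|\int_{\mathbb{R}^{2}}f\cdot\omega\,dx\Big|\leq\|\Lambda^{-\alpha}\mathcal{L}^{-1}f\|_{L^{2}}\,\|\Lambda^{\alpha}\mathcal{L}\omega\|_{L^{2}}.$$
Since $\mathcal{L}^{-1}$ has the sub-polynomially growing symbol $g(\xi)$, the bound $g(r)\leq\widetilde{C}r^{\epsilon}$ for any fixed small $\epsilon>0$ (cf. the argument of \eqref{logtt002}) lets me estimate, after a harmless low/high frequency split in which the two derivatives carried by $f$ render the low frequencies trivial,
$$\|\Lambda^{-\alpha}\mathcal{L}^{-1}f\|_{L^{2}}\leq C\|\Lambda^{2-\alpha+\epsilon}(b\otimes b)\|_{L^{2}}+C\|b\otimes b\|_{L^{2}}\leq C\|b\|_{L^{\infty}}\|\Lambda^{\gamma+\epsilon}b\|_{L^{2}}+C,$$
where I used $2-\alpha=\gamma$, the fractional Leibniz estimate \eqref{yzzqq1}, and $\|b\|_{L^{\infty}}\leq C$ from \eqref{RMHDyet302}.

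It then remains to control $\|\Lambda^{\gamma+\epsilon}b\|_{L^{2}}$: choosing $\epsilon<1$ one has $\gamma+\epsilon<1+\gamma$, so $\gamma+\epsilon\leq 1+\nu$ for some $\nu<\gamma$, whence Lemma \ref{zxdDSma} (interpolated, if needed, with the bounds of \eqref{RMHDyet302}) gives $\sup_{t\in[0,T]}\|\Lambda^{\gamma+\epsilon}b(t)\|_{L^{2}}\leq C(T,v_{0},b_{0})$. Therefore $\|\Lambda^{-\alpha}\mathcal{L}^{-1}f\|_{L^{2}}\leq C$, and Young's inequality yields $\frac{d}{dt}\|\omega\|_{L^{2}}^{2}+\|\Lambda^{\alpha}\mathcal{L}\omega\|_{L^{2}}^{2}\leq C$; integrating in time and recalling that $\|\omega(0)\|_{L^{2}}\leq\|\nabla v_{0}\|_{L^{2}}<\infty$ produces exactly \eqref{Rfdty78i17}. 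The crux of the argument — and the only place the full strength of the hypotheses is needed — is precisely this balancing: the operator $\mathcal{L}^{-1}$ inside the dissipation costs exactly $\epsilon$ derivatives on the Lorentz force, and this $\epsilon$ is supplied for free by the strictly subcritical bound $\|\Lambda^{1+\nu}b\|_{L^{2}}$, $\nu<\gamma$, of Lemma \ref{zxdDSma}. When $\gamma>1$ one could instead bound the forcing directly by $C\|b\|_{L^{\infty}}\|\Lambda^{2}b\|_{L^{2}}+C\|\nabla b\|_{L^{\infty}}\|\nabla b\|_{L^{2}}$ and close by Gronwall, but the duality route has the advantage of handling all $\alpha\in(0,2)$ uniformly.
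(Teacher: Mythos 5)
Your proof is correct, but it closes the $L^{2}$ vorticity estimate by a genuinely different mechanism than the paper. The paper does not use the dissipation $\|\Lambda^{\alpha}\mathcal{L}\omega\|_{L^{2}}^{2}$ at all when bounding the forcing: it first establishes the auxiliary bound $\sup_{t\in[0,T]}\|\Delta b(t)\|_{L^{2}}\leq C(T,v_{0},b_{0})$ via an energy estimate on the $b$-equation (a step that consumes \eqref{RMHDyet302}, \eqref{sdfrD21rd} and, crucially, the $\sup_{t}\|\nabla b\|_{L^{\infty}}$ bound \eqref{RMHDyet315} together with $\Lambda^{2\alpha}\mathcal{L}v\in L^{2}_{T}L^{2}$ from Lemma \ref{RMHDS33}), and then simply estimates $\big|\int\nabla^{\perp}\nabla\cdot(b\otimes b)\,\omega\,dx\big|\leq C\|b\|_{L^{\infty}}\|\Delta b\|_{L^{2}}\|\omega\|_{L^{2}}$ and closes by Gronwall. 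You instead pair the forcing against the dissipation by duality, so that only $2-\alpha+\epsilon=\gamma+\epsilon<1+\gamma$ derivatives of $b$ in $L^{2}$ are required, and these are exactly what Lemma \ref{zxdDSma} (interpolated with \eqref{RMHDyet302} when $\gamma+\epsilon\leq1$) supplies. Your route is leaner: it bypasses the $\Delta b$ estimate and the dependence on Lemma \ref{RMHDS33} entirely and actually exploits the vorticity dissipation, at the modest price of invoking once more the sub-polynomial growth $g(r)\leq\widetilde{C}r^{\epsilon}$ (the same fact the paper already uses in \eqref{logtt002}) and checking that the two derivatives carried by $\nabla^{\perp}\nabla\cdot$ render the low-frequency part of $\Lambda^{-\alpha}\mathcal{L}^{-1}$ harmless for $\alpha<2$, which they do since $g$ is bounded near the origin. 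Both arguments are valid and yield \eqref{Rfdty78i17}.
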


\begin{proof}
We first claim
\begin{align}\label{RMHDyet317}
\sup_{t\in [0,\,T]}\|\Delta b(t)\|_{L^{2}}\leq
C(T,v_{0},b_{0}).
\end{align}
To this end, applying $\Delta$ to \eqref{RMHDyet303} and multiplying the resultant by $\Delta b$, it leads to
\begin{align*}
\frac{1}{2}\frac{d}{dt}\|\Delta b(t)\|_{L^{2}}^{2}&=-\int_{\mathbb{R}^{2}}{[\Delta, u\cdot \nabla ]b \cdot \Delta b\,dx}+\int_{\mathbb{R}^{2}}{\Delta(b\cdot \nabla u) \cdot \Delta b\,dx}
\nonumber\\&\leq C\|[\Delta, u\cdot \nabla ]b\|_{L^{2}}\|\Delta b\|_{L^{2}}+C\|\Delta(b\cdot \nabla u)\|_{L^{2}}\|\Delta b\|_{L^{2}}
\nonumber\\&\leq C(\|\nabla u\|_{L^{\infty}}\|\Delta b\|_{L^{2}}+\|\nabla b\|_{L^{\infty}}\|\Delta u\|_{L^{2}})\|\Delta b\|_{L^{2}}\nonumber\\&\quad+C(\|\nabla u\|_{L^{\infty}}\|\Delta b\|_{L^{2}}+\|b\|_{L^{\infty}}\|\Lambda^{3} u\|_{L^{2}})\|\Delta b\|_{L^{2}},
\end{align*}
which implies
\begin{align*}
 \frac{d}{dt}\|\Delta b(t)\|_{L^{2}} &\leq C\|\nabla u\|_{L^{\infty}}\|\Delta b\|_{L^{2}}+C(\|\nabla b\|_{L^{\infty}}+\|b\|_{L^{\infty}})
 (\|u\|_{L^{2}}+\|\Lambda^{2\alpha}\mathcal{L}v\|_{L^{2}}).
\end{align*}
By means of \eqref{RMHDyet302}, \eqref{sdfrD21rd}, \eqref{RMHDyet315} and the Gronwall inequality, we thus obtain
\begin{align*}
\sup_{t\in [0,\,T]}\|\Delta b(t)\|_{L^{2}}\leq
C(T,v_{0},b_{0}),
\end{align*}
which is \eqref{RMHDyet317}. We remark that in this case, the vorticity $\omega:=\nabla^{\perp}\cdot v$ reads
\begin{align}\label{RMHDyet318}
\partial_{t}\omega+(u\cdot\nabla)\omega+\Lambda^{2\alpha}\mathcal{L}^{2}\omega=\nabla^{\perp}\nabla\cdot(b\otimes b).\end{align}
Taking the $L^2$ inner product of \eqref{RMHDyet318} with $\omega$ yields
\begin{align*}
\frac{1}{2}\frac{d}{dt}\|\omega(t)\|_{L^2}^{2}+\|\Lambda^{\alpha}\mathcal{L}\omega\|_{L^2}^{2}
&\leq
C\|\nabla^{\perp}\nabla\cdot(b\otimes b)\|_{L^2}\|\omega\|_{L^2}\nonumber\\
&\leq
C\|b\|_{L^{\infty}}\|\Delta b\|_{L^{2}}\|\omega\|_{L^2}.
\end{align*}
The Gronwall inequality and \eqref{RMHDyet317} allow us to get
$$\sup_{t\in [0,\,T]}\|\omega(t)\|_{L^{2}}^{2}
+\int_{0}^{T}{\|\Lambda^{\alpha}\mathcal{L}\omega(\tau)\|_{L^{2}}^{2} \,d\tau}\leq
C(T,v_{0},b_{0}).$$
Consequently, we complete the proof of Lemma \ref{RMHDS34}.
\end{proof}

\vskip .1in
With the above estimates at our disposal, we will prove Theorem \ref{Th2}.
\begin{proof}[{Proof of Theorem \ref{Th2}}]
Applying $\Lambda^{s}$ to $\eqref{RMHDyet318}$ and multiplying it by $\Lambda^{s}\omega$ yield
\begin{align*}
\frac{1}{2}\frac{d}{dt}\|\Lambda^{s}\omega(t)\|_{L^2}^{2}+\|\Lambda^{s+\alpha}
\mathcal{L}\omega\|_{L^2}^{2}
=\int_{\mathbb{R}^{2}}[\Lambda^{s},\,u\cdot\nabla]\omega \Lambda^{s}\omega\,dx +\int_{\mathbb{R}^{2}}\Lambda^{s}\nabla^{\perp}\nabla\cdot(b\otimes b)\Lambda^{s}\omega.
\end{align*}
Applying $\Lambda^{s+2-\alpha'}$ to $\eqref{logRG2DMHD}_{2}$ and multiplying it by $\Lambda^{s+2-\alpha'}b$ with $\alpha'<\alpha$, one gets
\begin{align*}
\frac{1}{2}\frac{d}{dt}\|\Lambda^{s+2-\alpha'}b(t)\|_{L^2}^{2}
&=\int_{\mathbb{R}^{2}}{\Lambda^{s+2-\alpha'}(b\cdot \nabla u) \cdot \Lambda^{s+2-\alpha'}b\,dx} \nonumber\\&-\int_{\mathbb{R}^{2}}{[\Lambda^{s+2-\alpha'},u\cdot \nabla] b \cdot \Lambda^{s+2-\alpha'}b\,dx}.
\end{align*}
Thanks to \eqref{logtt002}, we get that
\begin{align}\label{fzubvc3erf}
\|\Lambda^{s+\alpha'}\omega\|_{L^2}\leq C(\|\omega\|_{L^{2}}+\|\Lambda^{s+\alpha}
\mathcal{L}\omega\|_{L^2}),\quad \mbox{for}\ \ 0<\alpha'<\alpha.
\end{align}
Due to $\nabla\cdot u=0$ and \eqref{yfzdf68b}, it implies
\begin{align*}  &
\int_{\mathbb{R}^{2}}[\Lambda^{s},\,u\cdot\nabla]\omega \Lambda^{s}\omega\,dx \nonumber\\&=
\int_{\mathbb{R}^{2}}[\Lambda^{s}\partial_{i},\,u_{i}]\omega \Lambda^{s}\omega\,dx\nonumber\\& \leq C\|[\Lambda^{s}\partial_{i},\,u_{i}]\omega \|_{L^{2}}\|\Lambda^{s}\omega\|_{L^{2}}\nonumber\\& \leq C(\|\nabla u\|_{L^{\infty}}\|\Lambda^{s}\omega\|_{L^{2}}+\|\omega\|_{L^{p_{2}}}\|\Lambda^{s+1}
u\|_{L^{\frac{2p_{2}}{p_{2}-2}}})\|\Lambda^{s}\omega\|_{L^{2}}
\nonumber\\&
\leq C\left(\|\nabla u\|_{L^{\infty}}\|\Lambda^{s}\omega\|_{L^{2}}+
(\|\omega\|_{L^{2}}+\|\Lambda^{\alpha}\mathcal{L}\omega\|_{L^{2}})
\|\Lambda^{s+\alpha}\mathcal{L}\omega\|_{L^{2}}\right)\|\Lambda^{s}\omega\|_{L^{2}}
\nonumber\\& \leq \frac{1}{8}\|\Lambda^{s+\alpha}\mathcal{L}\omega\|_{L^2}^{2}+C(\|\nabla u\|_{L^{\infty}}+\|\omega\|_{L^{2}}^{2}+\|\Lambda^{\alpha}\mathcal{L}\omega\|_{L^{2}}
^{2})\|\Lambda^{s}\omega\|_{L^{2}}^{2},
\end{align*}
where $p_{2}>2$ satisfies
$$\frac{1-\alpha}{2}<\frac{1}{p_{2}}<\frac{\alpha+2\gamma}{2}.$$
By means of \eqref{yzzqq1} and \eqref{fzubvc3erf}, we get
\begin{align*}
\int_{\mathbb{R}^{2}}\Lambda^{s}\nabla^{\perp}\nabla\cdot(b\otimes b)\Lambda^{s}\omega&\leq C\|\Lambda^{s-\alpha'}\nabla^{\perp}\nabla\cdot(b\otimes b)\|_{L^{2}}\|\Lambda^{s+\alpha'}\omega\|_{L^{2}}\nonumber\\&\leq C\|b\|_{L^{\infty}}\|\Lambda^{s+2-\alpha'}b\|_{L^{2}}\|\Lambda^{s+\alpha'}
\omega\|_{L^{2}}
\nonumber\\&\leq C\|b\|_{L^{\infty}}\|\Lambda^{s+2-\alpha'}b\|_{L^{2}}(\|\omega\|_{L^{2}}+\|\Lambda^{s+\alpha}
\mathcal{L}\omega\|_{L^2})
\nonumber\\&\leq\frac{1}{8}\|\Lambda^{s+\alpha}\mathcal{L}\omega\|_{L^{2}}^{2}+ C\|b\|_{L^{\infty}}^{2} \|\Lambda^{s+2-\alpha'}b\|_{L^{2}}^{2}\nonumber\\&\quad +
C\|b\|_{L^{\infty}}\|\omega\|_{L^{2}}\|\Lambda^{s+2-\alpha'}b\|_{L^{2}}.
\end{align*}
Similarly, we can verify
\begin{align*} &
\int_{\mathbb{R}^{2}}{\Lambda^{s+2-\alpha'}(b\cdot \nabla u) \cdot \Lambda^{s+2-\alpha'}b\,dx}\nonumber\\&\leq C(\|b\|_{L^{\infty}} \|\Lambda^{s+3-\alpha'}
u\|_{L^{2}}+\|\nabla u\|_{L^{\infty}} \|\Lambda^{s+2-\alpha'}
b\|_{L^{2}})\|\Lambda^{s+2-\alpha'}b\|_{L^{2}}
\nonumber\\&\leq C(\|b\|_{L^{\infty}} \|\Lambda^{s+\alpha'}
\omega\|_{L^{2}}+\|\nabla u\|_{L^{\infty}} \|\Lambda^{s+2-\alpha'}
b\|_{L^{2}})\|\Lambda^{s+2-\alpha'}b\|_{L^{2}}
\nonumber\\&\leq C(\|b\|_{L^{\infty}} \|\omega\|_{L^{2}}+\|b\|_{L^{\infty}} \|\Lambda^{s+\alpha}\mathcal{L}\omega\|_{L^{2}}+\|\nabla u\|_{L^{\infty}} \|\Lambda^{s+2-\alpha'}
b\|_{L^{2}})\|\Lambda^{s+2-\alpha'}b\|_{L^{2}}
\nonumber\\&\leq\frac{1}{8} \|\Lambda^{s+\alpha}\mathcal{L}\omega\|_{L^{2}}^{2}+ C(\|\nabla u\|_{L^{\infty}}+\|b\|_{L^{\infty}}^{2})\|\Lambda^{s+2-\alpha'}b\|_{L^{2}}^{2}
\nonumber\\&\quad+C
\|b\|_{L^{\infty}}\|\omega\|_{L^{2}}\|\Lambda^{s+2-\alpha'}b\|_{L^{2}}.
\end{align*}
Using $\nabla\cdot u=0$ and \eqref{yfzdf68b}, it yields
\begin{align*} &
\int_{\mathbb{R}^{2}}{\Lambda^{s+2-\alpha'}(u\cdot \nabla b) \cdot \Lambda^{s+2-\alpha'}b\,dx}\nonumber\\&=\int_{\mathbb{R}^{2}}{
[\Lambda^{s+2-\alpha'}\partial_{x_{i}},
u_{i}]b_{j}  \Lambda^{s+2-\alpha'}b_{j}\,dx}
\nonumber\\&\leq C\|[\Lambda^{s+2-\alpha'}\partial_{x_{i}},
u_{i}]b_{j} \|_{L^{2}}\|\Lambda^{s+2-\alpha'}b\|_{L^{2}}
\nonumber\\&\leq
C(\|b\|_{L^{\infty}} \|\Lambda^{s+3-\alpha'}
u\|_{L^{2}}+\|\nabla u\|_{L^{\infty}} \|\Lambda^{s+2-\alpha'}
b\|_{L^{2}})\|\Lambda^{s+2-\alpha'}b\|_{L^{2}}
\nonumber\\&\leq C(\|b\|_{L^{\infty}} \|\Lambda^{s+\alpha'}
\omega\|_{L^{2}}+\|\nabla u\|_{L^{\infty}} \|\Lambda^{s+2-\alpha'}
b\|_{L^{2}})\|\Lambda^{s+2-\alpha'}b\|_{L^{2}}
\nonumber\\&\leq\frac{1}{8} \|\Lambda^{s+\alpha}\mathcal{L}\omega\|_{L^{2}}^{2}+ C(\|\nabla u\|_{L^{\infty}}+\|b\|_{L^{\infty}}^{2})\|\Lambda^{s+2-\alpha'}b\|_{L^{2}}^{2}\nonumber\\&\quad+C
\|b\|_{L^{\infty}}\|\omega\|_{L^{2}}\|\Lambda^{s+2-\alpha'}b\|_{L^{2}}.
\end{align*}
Collecting all the above estimates, one derives
\begin{align*} &
\frac{d}{dt}(\|\Lambda^{s}\omega(t)\|_{L^2}^{2}+\|\Lambda^{s+2-\alpha'}b(t)\|_{L^2}^{2})
+\|\Lambda^{s+\alpha}\mathcal{L}
\omega\|_{L^2}^{2}\nonumber\\&\leq C(\|\nabla u\|_{L^{\infty}}+\|\omega\|_{L^{2}}^{2}+\|\Lambda^{\alpha}\mathcal{L}\omega\|_{L^{2}}
^{2}+\|b\|_{L^{\infty}}^{2})
(\|\Lambda^{s}\omega\|_{L^2}^{2}+\|\Lambda^{s+2-\alpha'}b\|_{L^2}^{2})
\nonumber\\&\quad+C
\|b\|_{L^{\infty}}\|\omega\|_{L^{2}}\|\Lambda^{s+2-\alpha'}b\|_{L^{2}}.
\end{align*}
By \eqref{RMHDyet302}, \eqref{sdfrD21rd} and \eqref{Rfdty78i17}, we have
$$\int_{0}^{T}(\|\nabla u\|_{L^{\infty}}+\|\omega\|_{L^{2}}^{2}+\|\Lambda^{\alpha}\mathcal{L}\omega\|_{L^{2}}
^{2}+\|b\|_{L^{\infty}}^{2}+\|b\|_{L^{\infty}}\|\omega\|_{L^{2}})(t)\,dt\leq C(T,v_{0},b_{0}),$$
which together with the Gronwall inequality entails
$$\sup_{t\in[0,\,T]}(\|\Lambda^{s}\omega(t)\|_{L^2}^{2}+\|\Lambda^{s+2-\alpha'}b(t)\|_{L^2}^{2})
+\int_{0}^{T}\|\Lambda^{s+\alpha}\mathcal{L}
\omega(\tau)\|_{L^2}^{2}\,d\tau\leq C(T,v_{0},b_{0}).$$
Consequently, this completes the proof of Theorem \ref{Th2}.
\end{proof}

\vskip .2in

\begin{center}
The proof of Theorem \ref{addTh2}
\end{center}
\vskip .1in
The proof of Theorem \ref{addTh2} can be performed as that of Theorem \ref{Th2}.
For the sake of convenience, we sketch its proof as follows. First, we have
\begin{lemma}\label{yelem620l1}
Assume $(v_{0},\,b_{0})$ satisfies the assumptions stated in
Theorem \ref{addTh2}.
Then the corresponding solution $(v, b)$
of (\ref{endlogRG2DMHD}) admits the following bound
\begin{align}\label{yelem620t01}
\|u(t)\|_{L^{2}}^{2}+\Big\|\frac{\Lambda^{2}}{g(\Lambda)} u(t)\Big\|_{L^{2}}^{2}+\|b(t)\|_{L^{2}}^{2}
\leq
C(v_{0},b_{0}).
\end{align}
\end{lemma}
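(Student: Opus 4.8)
The plan is to reproduce the basic energy argument of Lemma \ref{RMHDY1}, the only genuinely new feature being the bookkeeping of the logarithmic multiplier $\mathcal{L}$, which has symbol $1/g(\xi)$ and is therefore self-adjoint and commutes with $\Lambda$. First I would pair the velocity equation $\eqref{endlogRG2DMHD}_{1}$ with $u$ and the magnetic equation $\eqref{endlogRG2DMHD}_{2}$ with $b$ in $L^{2}(\mathbb{R}^{2})$ and add the two resulting identities. Using the relation $v=u+(-\Delta)^{2}\mathcal{L}^{2}u=u+\Lambda^{4}\mathcal{L}^{2}u$ together with the self-adjointness of $\Lambda^{2}\mathcal{L}$, the time-derivative term becomes
\begin{align*}
\langle\partial_{t}v,\,u\rangle=\frac{1}{2}\frac{d}{dt}\|u\|_{L^{2}}^{2}+\frac{1}{2}\frac{d}{dt}\Big\|\frac{\Lambda^{2}}{g(\Lambda)}u\Big\|_{L^{2}}^{2},
\end{align*}
so that the two kinetic pieces appearing on the left-hand side of \eqref{yelem620t01} are produced exactly.

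Next I would dispose of the remaining terms. The pressure gradient integrates against $u$ to zero because $\nabla\cdot u=0$, and the magnetic transport term vanishes since $\int_{\mathbb{R}^{2}}(u\cdot\nabla)b\cdot b\,dx=\tfrac12\int_{\mathbb{R}^{2}}(u\cdot\nabla)|b|^{2}\,dx=0$ by the same reason. The coupling of the Lorentz force with the stretching term cancels by \eqref{cancc01}, an integration by parts requiring only $\nabla\cdot b=0$, while the transport term together with the filtered term $\sum_{j}v_{j}\nabla u_{j}$ cancels by \eqref{cancc02}, which rests on the identity $\sum_{i,k}u_{i}\partial_{i}(u_{k}v_{k})=(u\cdot\nabla)(u\cdot v)$ and $\nabla\cdot u=0$. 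Because $\alpha=\beta=0$ there is no dissipation term to retain, so all surviving contributions vanish and I arrive at the exact conservation law
\begin{align*}
\frac{d}{dt}\Big(\|u(t)\|_{L^{2}}^{2}+\Big\|\frac{\Lambda^{2}}{g(\Lambda)}u(t)\Big\|_{L^{2}}^{2}+\|b(t)\|_{L^{2}}^{2}\Big)=0.
\end{align*}

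Finally I would integrate in time and bound the initial energy by the prescribed data. Writing $s=|\xi|^{4}/g^{2}(\xi)\geq 0$, the relation $v=u+\Lambda^{4}\mathcal{L}^{2}u$ reads $\widehat{v}_{0}=(1+s)\widehat{u}_{0}$ on the Fourier side, whence Plancherel's theorem gives
\begin{align*}
\|u_{0}\|_{L^{2}}^{2}+\Big\|\frac{\Lambda^{2}}{g(\Lambda)}u_{0}\Big\|_{L^{2}}^{2}=\int_{\mathbb{R}^{2}}\frac{1+s}{(1+s)^{2}}|\widehat{v}_{0}(\xi)|^{2}\,d\xi\leq\|v_{0}\|_{L^{2}}^{2}.
\end{align*}
Thus the conserved energy is bounded by $\|v_{0}\|_{L^{2}}^{2}+\|b_{0}\|_{L^{2}}^{2}\leq C(v_{0},b_{0})$, which is precisely \eqref{yelem620t01}. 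I do not expect any real obstacle here, since this is a conservation identity rather than an inequality; the one point deserving mild care is that $\mathcal{L}$ enters the kinetic energy through the composite weight $\Lambda^{2}/g(\Lambda)$ rather than as a plain fractional derivative, but its self-adjointness makes the computation identical in spirit to that of Lemma \ref{RMHDY1}.
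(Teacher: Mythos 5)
Your proposal is correct and follows exactly the route the paper intends: the basic energy identity of Lemma \ref{RMHDY1} adapted via the cancellations \eqref{cancc01}--\eqref{cancc02} and the relation $v=u+\Lambda^{4}\mathcal{L}^{2}u$, which the paper leaves implicit for Lemma \ref{yelem620l1} (it only sketches the proofs in this endpoint case). The Fourier-side bound of the initial energy by $\|v_{0}\|_{L^{2}}^{2}$ is a nice explicit touch, but the argument is the same in substance.
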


\vskip .1in
Modifying the proof of Lemma \ref{RMHDS32}, we will derive the following crucial estimate.

\begin{lemma}\label{yelem620l2}
Assume $(v_{0},\,b_{0})$ satisfies the assumptions stated in
Theorem \ref{addTh2}. Then the corresponding solution $(v, b)$
of (\ref{endlogRG2DMHD}) admits the following bound
\begin{align}\label{yelem620t02}
\sup_{t\in [0,\,T]}(\|b(t)\|_{L^{\infty}}
+\|\nabla b(t)\|_{L^{2}}
+\|v(t)\|_{L^{2}}+\|\nabla b(t)\|_{L^{\infty}})\leq C(T,v_{0},b_{0}).
\end{align}
Due to $v=u+(-\Delta)^{2}\mathcal{L}^{2}u$, it holds true
\begin{align}\label{yelem620t03}
\sup_{t\in [0,\,T]}\|\Lambda^{4} \mathcal{L}^{2}u(t)\|_{L^{2}}\leq
C(T,v_{0},b_{0}).
\end{align}
\end{lemma}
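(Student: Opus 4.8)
The plan is to transcribe the proof of Lemma \ref{RMHDS32} with one structural change: the velocity equation in \eqref{endlogRG2DMHD} carries \emph{no} dissipation, so there is no gain term $H(t)$ on the left-hand side and everything must be paid for by the conserved energy \eqref{yelem620t01}. First I would reproduce the three transport estimates on $b$ used there: the $L^\infty$ bound \eqref{RMHDyet304}, the $\|\nabla b\|_{L^2}$ bound \eqref{RMHDyet306}, and the $\|v\|_{L^2}$ bound obtained by testing $\eqref{endlogRG2DMHD}_{1}$ against $v$ as in \eqref{RMHDyet307} (but now without the dissipation integral). Combining these with the same weights, I set
$$V(t):=\|b(t)\|_{L^\infty}^3+\|b(t)\|_{L^\infty}^6+\|\nabla b(t)\|_{L^2}^3+\|v(t)\|_{L^2}^2,$$
and obtain, exactly as in \eqref{RMHDzxr21},
$$\frac{d}{dt}V(t)\leq C\big(1+\|\nabla u\|_{L^\infty}+\|\Delta u\|_{L^2}\big)V(t).$$
Everything then reduces to controlling $\|\nabla u\|_{L^\infty}$ and $\|\Delta u\|_{L^2}$.

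For these two quantities I would run the high--low frequency splitting of Lemma \ref{RMHDS32}, but fed by two inputs special to this endpoint: (i) the \emph{uniform-in-time} energy bound $\big\|\frac{\Lambda^2}{g(\Lambda)}u\big\|_{L^2}\le C$ from Lemma \ref{yelem620l1}, and (ii) the smoothing of the regularization $v=u+(-\Delta)^2\mathcal L^2 u$, which by the argument behind \eqref{logtt002} and \eqref{RMHDzxr2011} gives $\|\Lambda^{r_1}u\|_{L^2}\le C(\|u\|_{L^2}+\|v\|_{L^2})\le C(e+V)^{1/2}$ for every $r_1<4$. Writing $\|\nabla u\|_{L^\infty}$ (and likewise $\|\Delta u\|_{L^2}$) as low $+$ middle $+$ high, Bernstein bounds the low block by $\|u\|_{L^2}$, Cauchy--Schwarz bounds the middle block $\sum_{l<N}2^{2l}\|\Delta_l u\|_{L^2}$ by $Cg(2^N)\sqrt N\,\big\|\frac{\Lambda^2}{g(\Lambda)}u\big\|_{L^2}$, and the high block by $C2^{N(2-r_1)}\|\Lambda^{r_1}u\|_{L^2}$ with $2<r_1<4$. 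Choosing $2^N\approx(e+V)^{1/(2(r_1-2))}$ makes the high block $O(1)$, so that
$$\|\nabla u\|_{L^\infty}+\|\Delta u\|_{L^2}\le C\Big(1+g\big((e+V)^{1/(2(r_1-2))}\big)\sqrt{\ln(e+V)}\Big).$$
Inserting this into the $V$-inequality and using $g\ge1$ yields the separable bound
$$\frac{d}{dt}V\le C\,g\big((e+V)^{1/(2(r_1-2))}\big)\sqrt{\ln(e+V)}\,(e+V),$$
hence $\int_{e+V(0)}^{e+V(t)}\frac{d\tau}{\tau\sqrt{\ln\tau}\,g(\tau^{1/(2(r_1-2))})}\le CT$. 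The substitution $\tau=s^{2(r_1-2)}$ turns the left integral into a constant multiple of $\int^{\cdot}\frac{ds}{s\sqrt{\ln s}\,g(s)}$, which diverges \emph{precisely} under \eqref{endlogcobd}; this forces $\sup_{[0,T]}V(t)\le C(T,v_0,b_0)$ and gives the first three bounds of \eqref{yelem620t02}. I would stress that the single power of $g$ in \eqref{endlogcobd}, as opposed to $g^2$ in Theorem \ref{Th2}, is exactly because the energy here carries only one factor $1/g$ and there is no time-integrable dissipation factor left to absorb.

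With $\|v\|_{L^2}\le C$ in hand, $\|\Lambda^{r_1}u\|_{L^2}\le C$ for all $r_1<4$; taking $r_1=3+\epsilon$ and using $H^{r_1}(\mathbb R^2)\hookrightarrow W^{2,\infty}(\mathbb R^2)$ gives $\|\nabla u\|_{L^\infty}+\|\nabla^2 u\|_{L^\infty}\le C$ uniformly in $t$. The equation for $\nabla b$ obtained by differentiating $\eqref{endlogRG2DMHD}_{2}$ then yields
$$\frac{d}{dt}\|\nabla b(t)\|_{L^\infty}\le C\|\nabla u\|_{L^\infty}\|\nabla b\|_{L^\infty}+C\|b\|_{L^\infty}\|\nabla^2 u\|_{L^\infty},$$
and since all coefficients are now bounded, Gronwall gives $\sup_{[0,T]}\|\nabla b\|_{L^\infty}\le C(T,v_0,b_0)$, completing \eqref{yelem620t02}. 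Finally \eqref{yelem620t03} is immediate, since $\Lambda^4\mathcal L^2 u=v-u$ forces $\|\Lambda^4\mathcal L^2 u\|_{L^2}\le\|v\|_{L^2}+\|u\|_{L^2}\le C$.

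The only genuine difficulty is the critical logarithmic closure in the second paragraph: one must balance the cutoff $N$ so that the high-frequency tail---available only through $\|v\|_{L^2}\lesssim V^{1/2}$ rather than through a parabolic dissipation term---is reabsorbed into the same power of $(e+V)$, and then check that the resulting ODE integrates against exactly the borderline condition \eqref{endlogcobd} after the change of variables. Everything else is a routine adaptation of Lemma \ref{RMHDS32}.
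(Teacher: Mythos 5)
Your proposal is correct and follows essentially the same route as the paper: the same functional $V(t)$, the same high--low--middle Littlewood--Paley splitting of $\|\nabla u\|_{L^{\infty}}$ and $\|\Delta u\|_{L^{2}}$ fed by the conserved quantity $\bigl\|\tfrac{\Lambda^{2}}{g(\Lambda)}u\bigr\|_{L^{2}}$ and by $\|\Lambda^{r_{1}}u\|_{L^{2}}\leq C(\|u\|_{L^{2}}+\|v\|_{L^{2}})$ for $r_{1}<4$, the same choice $2^{N}\approx(e+V)^{1/(2(r_{1}-2))}$, and the same Osgood closure against \eqref{endlogcobd}, followed by the identical Gronwall argument for $\|\nabla b\|_{L^{\infty}}$ and the trivial identity $\Lambda^{4}\mathcal{L}^{2}u=v-u$ for \eqref{yelem620t03}. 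No gaps.
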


\begin{proof}
Following the arguments in deriving \eqref{RMHDzxr21}, it is not difficult to check that
\begin{align}\label{yelem620t0324}
\frac{d}{dt}V(t)\leq C(1+\|\nabla u\|_{L^{\infty}}+\|\Delta u\|_{L^{2}})V(t),
\end{align}
where
$$V(t):=\|b(t)\|_{L^{\infty}}^{3}+\|b(t)\|_{L^{\infty}}^{6}+
\|\nabla b(t)\|_{L^{2}}^{3}+\|v(t)\|_{L^{2}}^{2}.$$
Moreover, one may also conclude that for some $r_{1}>2$
\begin{eqnarray}
\|\nabla u\|_{L^{\infty}}\leq C\|u\|_{L^{2}}+Cg (2^{N})\sqrt{N} \Big\|\frac{\Lambda^{2}}{ g (\Lambda)}u \Big\|_{L^{2}}+C2^{N(2-r_{1})}\|\Lambda^{r_{1}}u\|_{L^{2}},\nonumber
\end{eqnarray}
\begin{eqnarray}
\|\Delta u\|_{L^{2}}\leq C\|u\|_{L^{2}}+Cg(2^{N})\sqrt{N} \Big\|\frac{\Lambda^{2}}{ g(\Lambda)}u \Big\|_{L^{2}}+C2^{N(2-r_{1})}\|\Lambda^{r_{1}}u\|_{L^{2}}.\nonumber
\end{eqnarray}
Making use of the following fact due to $v=u+(-\Delta)^{2}\mathcal{L}^{2}u$
$$C2^{N(2-r_{1})}\|\Lambda^{r_{1}}u\|_{L^{2}}\leq C(\|u\|_{L^{2}}+2^{N(2-r_{1})}\|v\|_{L^{2}}),$$
it yields that
\begin{eqnarray}
 \frac{d}{dt}V(t)
 \leq    CV(t)+Cg(2^{N})\sqrt{N} \Big\|\frac{\Lambda^{2}}{ g(\Lambda)}u \Big\|_{L^{2}}V(t)+C2^{N(2-r_{1})}V^{\frac{1}{2}}(t)V(t).\nonumber
\end{eqnarray}
Taking $N$ such
$$2^{N}\approx \left(e+V(t)\right)^{\frac{1}{2(r_{1}-2)}},$$
we have
\begin{align}
\frac{d}{dt}V(t)
\leq    C\left(1+\Big\|\frac{\Lambda^{2}}{ g (\Lambda)}u \Big\|_{L^{2}}\right)g \big[\left(e+V(t)\right)^{\frac{1}{2(r_{1}-2)}}\big]\sqrt{\ln\big(e+A(t)\big)} \big(e+V(t)\big).\nonumber
\end{align}
It thus follows from the above inequality that
$$\int_{e+V(0)}^{e+V(t)}\frac{d\tau}{\tau\sqrt{\ln \tau} g (\tau^{\frac{1}{2(r_{1}-2)}})}\leq C
\int_{0}^{t}{
\left(1+\Big\|\frac{\Lambda^{2}}{ g (\Lambda)}u (\tau) \Big\|_{L^{2}}\right)\,d\tau}.$$
By \eqref{endlogcobd}, we have
$$\int_{e}^{\infty}\frac{d\tau}{\tau\sqrt{\ln \tau} g(\tau^{\frac{1}{2(r_{1}-2)}})}=\sqrt{2(r_{1}-2)}\int_{e^{\frac{1}{2(r_{1}-2)}}}^{\infty}\frac{d\tau}{\tau\sqrt{\ln \tau} g(\tau)}=\infty.$$
Therefore, noticing the following fact due to \eqref{yelem620t01}
$$\int_{0}^{T}{
\left(1+\Big\|\frac{\Lambda^{2}}{ g(\Lambda)}u (\tau) \Big\|_{L^{2}}\right)\,d\tau}\leq C(T,\,u_{0},\,w_{0}),$$
we can verify that
\begin{align}\label{yelem620t0326}
\sup_{t\in [0,\,T]}V(t)\leq C(T,v_{0},b_{0}).
\end{align}
Coming back to \eqref{ertcxp316}, we can show
\begin{align*}
\frac{d}{dt}\|\nabla b(t)\|_{L^{\infty}} &\leq C\|\nabla u\|_{L^{\infty}}\|\nabla b\|_{L^{\infty}} +C\|b\|_{L^{\infty}}\|\nabla^{2} u\|_{L^{\infty}}\|\nabla b\|_{L^{\infty}} \nonumber\\&\leq C(\|u\|_{L^{2}}+\|\Lambda^{4} \mathcal{L}^{2}u(t)\|_{L^{2}})\|\nabla b\|_{L^{\infty}}\nonumber\\&\quad+C\|b\|_{L^{\infty}}
(\|u\|_{L^{2}}+\|\Lambda^{4} \mathcal{L}^{2}u(t)\|_{L^{2}})\|\nabla b\|_{L^{\infty}},
\end{align*}
which along with the Gronwall inequality and \eqref{yelem620t0326} gives
$$\sup_{t\in [0,\,T]}\|\nabla b(t)\|_{L^{\infty}}\leq C(T,v_{0},b_{0}).$$
This concludes the proof of Lemma \ref{yelem620l2}.
\end{proof}

\vskip .1in
The following lemma concerns the $L^q$-estimate of $\omega$ for some $q>2$.
\begin{lemma}\label{yelem620l3}
Assume $(v_{0},\,b_{0})$ satisfies the assumptions stated in
Theorem \ref{addTh2}. Then the corresponding solution $(v, b)$
of (\ref{endlogRG2DMHD}) admits the following bound for some $q>2$
\begin{align}\label{yelem620t0327}
\sup_{t\in [0,\,T]}\|\omega(t)\|_{L^{q}}\leq
C(T,v_{0},b_{0}).
\end{align}
\end{lemma}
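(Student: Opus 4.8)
The plan is to exploit the very strong smoothing relation $v=u+(-\Delta)^{2}\mathcal{L}^{2}u$, which makes $u$ essentially four derivatives smoother than $v$, together with the transport--stretching structure of the $b$--equation \eqref{RMHDyet303}. Since there is no dissipation in the velocity equation, the vorticity solves the forced transport equation \eqref{RMHDTQye07}, $\partial_{t}\omega+(u\cdot\nabla)\omega=\nabla^{\perp}\nabla\cdot(b\otimes b)$, so an $L^{q}$ estimate will reduce to controlling $\|\nabla^{2}(b\otimes b)\|_{L^{q}}$, i.e. essentially $\|\nabla^{2}b\|_{L^{q}}$. First I would record the regularity gained for $u$: Lemma \ref{yelem620l2} gives $\sup_{[0,T]}\|v\|_{L^{2}}\le C$ and $(-\Delta)^{2}\mathcal{L}^{2}u=v-u$, hence \eqref{yelem620t03}; arguing exactly as in \eqref{logtt002}--\eqref{RMHDzxr2011} (the factor $g$ is subpolynomial and cannot spoil the gain), for every $r<4$ one gets $\|\Lambda^{r}u\|_{L^{2}}\le C(\|u\|_{L^{2}}+\|\Lambda^{4}\mathcal{L}^{2}u\|_{L^{2}})\le C(T,v_{0},b_{0})$. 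By the two-dimensional Sobolev embeddings this already yields $\sup_{[0,T]}(\|\nabla u\|_{L^{\infty}}+\|\nabla^{2}u\|_{L^{\infty}})\le C(T,v_{0},b_{0})$ and, for any fixed $\nu\in(0,1)$, $\sup_{[0,T]}\|\Lambda^{3+\nu}u\|_{L^{2}}\le C(T,v_{0},b_{0})$.

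Next I would propagate $H^{2+\nu}$ regularity of $b$ for a small $\nu\in(0,1)$. Applying $\Lambda^{2+\nu}$ to \eqref{RMHDyet303} and pairing with $\Lambda^{2+\nu}b$, the convection term cancels by $\nabla\cdot u=0$, leaving the commutator $[\Lambda^{2+\nu},u\cdot\nabla]b$ and the stretching term $\Lambda^{2+\nu}(b\cdot\nabla u)$. Rewriting the commutator as $[\Lambda^{2+\nu}\partial_{i},u_{i}]b$ and invoking \eqref{yfzdf68b}, together with the product estimate \eqref{yzzqq1} for the stretching term, both are bounded by $C\|\nabla u\|_{L^{\infty}}\|\Lambda^{2+\nu}b\|_{L^{2}}+C\|b\|_{L^{\infty}}\|\Lambda^{3+\nu}u\|_{L^{2}}$. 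By the first paragraph and \eqref{yelem620t02} this is at most $C\|\Lambda^{2+\nu}b\|_{L^{2}}+C$, so the Gronwall inequality gives $\sup_{[0,T]}\|\Lambda^{2+\nu}b\|_{L^{2}}\le C(T,v_{0},b_{0})$. The embedding $H^{2+\nu}(\mathbb{R}^{2})\hookrightarrow W^{2,q}(\mathbb{R}^{2})$ with $q=\tfrac{2}{1-\nu}>2$ then yields $\sup_{[0,T]}\|\nabla^{2}b\|_{L^{q}}\le C(T,v_{0},b_{0})$.

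Finally I would close the estimate for $\omega$. Multiplying \eqref{RMHDTQye07} by $|\omega|^{q-2}\omega$ and integrating, the advection term vanishes, whence
$$\frac{d}{dt}\|\omega(t)\|_{L^{q}}\le C\|\nabla^{2}(b\otimes b)\|_{L^{q}}\le C\big(\|b\|_{L^{\infty}}\|\nabla^{2}b\|_{L^{q}}+\|\nabla b\|_{L^{\infty}}\|\nabla b\|_{L^{q}}\big).$$
Here $\|b\|_{L^{\infty}}$ and $\|\nabla b\|_{L^{\infty}}$ are bounded by \eqref{yelem620t02}, $\|\nabla b\|_{L^{q}}$ is bounded by interpolating the $L^{2}$ and $L^{\infty}$ bounds for $\nabla b$, and $\|\nabla^{2}b\|_{L^{q}}$ was just controlled; integrating in time gives $\sup_{[0,T]}\|\omega(t)\|_{L^{q}}\le C(T,v_{0},b_{0})$, as claimed.

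The main obstacle is the borderline regularity of $u$: controlling both the commutator and the stretching term at the $\Lambda^{2+\nu}$ level requires $\Lambda^{3+\nu}u\in L^{2}$, which is available only because $\gamma=2$ buys exactly four derivatives while $v\in L^{2}$ is already known. The constraint $3+\nu<4$ is precisely what confines $\nu$ to $(0,1)$ (equivalently $q$ to $(2,\infty)$), and one must make sure that the logarithmic operator $\mathcal{L}$ does not destroy the gain $\|\Lambda^{r}u\|_{L^{2}}\lesssim\|u\|_{L^{2}}+\|\Lambda^{4}\mathcal{L}^{2}u\|_{L^{2}}$ for $r<4$, which is exactly the content of \eqref{logtt002}.
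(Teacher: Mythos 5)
Your proposal is correct and follows essentially the same route as the paper: first propagate $\sup_{t\in[0,T]}\|\Lambda^{2+\theta}b(t)\|_{L^{2}}$ via an energy estimate on the $b$-equation, using that $v=u+(-\Delta)^{2}\mathcal{L}^{2}u$ together with \eqref{yelem620t02}--\eqref{yelem620t03} and the argument of \eqref{logtt002} control $\|\Lambda^{3+\theta}u\|_{L^{2}}$ and $\|\nabla u\|_{L^{\infty}}$, and then close the $L^{q}$ bound for $\omega$ from the transport equation \eqref{yelem620t0329} using $H^{2+\theta}(\mathbb{R}^{2})\hookrightarrow W^{2,q}(\mathbb{R}^{2})$. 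The only cosmetic differences are your use of \eqref{yfzdf68b} in place of \eqref{yzz} for the commutator and your explicit identification $q=\tfrac{2}{1-\nu}$, neither of which changes the argument.
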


\begin{proof}
For some $0<\theta<1$, we claim
\begin{align}\label{yelem620t0328}
\sup_{t\in [0,\,T]}\|\Lambda^{2+\theta} b(t)\|_{L^{2}}\leq
C(T,v_{0},b_{0}).
\end{align}
Applying $\Lambda^{2+\theta}$ to $\eqref{endlogRG2DMHD}_{2}$ and multiplying it by $\Lambda^{2+\theta}b$, we infer
\begin{align*}
\frac{1}{2}\frac{d}{dt}\|\Lambda^{2+\theta} b(t)\|_{L^{2}}^{2}&=-\int_{\mathbb{R}^{2}}{[\Lambda^{2+\theta}, u\cdot \nabla ]b \cdot \Lambda^{2+\theta} b\,dx}+\int_{\mathbb{R}^{2}}{\Lambda^{2+\theta}(b\cdot \nabla u) \cdot\Lambda^{2+\theta} b\,dx}
\nonumber\\&\leq C\|[\Lambda^{2+\theta}, u\cdot \nabla ]b\|_{L^{2}}\|\Lambda^{2+\theta} b\|_{L^{2}}+C\|\Lambda^{2+\theta}(b\cdot \nabla u)\|_{L^{2}}\|\Lambda^{2+\theta} b\|_{L^{2}}
\nonumber\\&\leq C(\|\nabla u\|_{L^{\infty}}\|\Lambda^{2+\theta} b\|_{L^{2}}+\|\nabla b\|_{L^{\infty}}\|\Lambda^{2+\theta} u\|_{L^{2}})\|\Lambda^{2+\theta} b\|_{L^{2}}\nonumber\\&\quad+C(\|\nabla u\|_{L^{\infty}}\|\Lambda^{2+\theta} b\|_{L^{2}}+\|b\|_{L^{\infty}}\|\Lambda^{3+\theta}u\|_{L^{2}})\|\Lambda^{2+\theta} b\|_{L^{2}},
\end{align*}
which allows us to show
\begin{align*}
 \frac{d}{dt}\|\Lambda^{2+\theta} b(t)\|_{L^{2}} &\leq C(\|u\|_{L^{2}}+\|\Lambda^{4} \mathcal{L}^{2}u(t)\|_{L^{2}})\|\Lambda^{2+\theta} b\|_{L^{2}}\nonumber\\&\quad+C(\|\nabla b\|_{L^{\infty}}+\|b\|_{L^{\infty}})
 (\|u\|_{L^{2}}+\|\Lambda^{4} \mathcal{L}^{2}u(t)\|_{L^{2}}).
\end{align*}
Thanks to \eqref{yelem620t02} and \eqref{yelem620t03}, we thus obtain the desired estimate \eqref{yelem620t0328}.
Let us recall the vorticity $\omega:=\nabla^{\perp}\cdot v$, which satisfies
\begin{align}\label{yelem620t0329}
\partial_{t}\omega+(u\cdot\nabla)\omega=\nabla^{\perp}\nabla\cdot(b\otimes b).\end{align}
Taking the $L^2$ inner product of \eqref{yelem620t0329} with $|\omega|^{q-2}\omega$, it implies
\begin{align*}
\frac{1}{q}\frac{d}{dt}\|\omega(t)\|_{L^q}^{q}
&\leq
C\|\nabla^{\perp}\nabla\cdot(b\otimes b)\|_{L^q}\|\omega\|_{L^q}^{q-1}\nonumber\\
&\leq
C\|b\|_{L^{\infty}}\|\Delta b\|_{L^{q}}\|\omega\|_{L^q}^{q-1}
\nonumber\\
&\leq
C\|b\|_{L^{\infty}}\|b\|_{H^{2+\theta}}\|\omega\|_{L^q}^{q-1},
\end{align*}
which leads to
$$\frac{d}{dt}\|\omega(t)\|_{L^q}\leq C\|b\|_{L^{\infty}}\|b\|_{H^{2+\theta}}.$$
Integrating it in time and using \eqref{yelem620t0328}, we have for some $q>2$
$$\sup_{t\in [0,\,T]}\|\omega(t)\|_{L^{q}} \leq
C(T,v_{0},b_{0}).$$
Consequently, Lemma \ref{yelem620l3} is proved.
\end{proof}

\vskip .1in
With the above estimates at our disposal, we will prove Theorem \ref{addTh2}.
\begin{proof}[{Proof of Theorem \ref{addTh2}}]
Applying $\Lambda^{s}$ to $\eqref{yelem620t0329}$ and multiplying it by $\Lambda^{s}\omega$ yield
\begin{align*}
\frac{1}{2}\frac{d}{dt}\|\Lambda^{s}\omega(t)\|_{L^2}^{2}
=\int_{\mathbb{R}^{2}}[\Lambda^{s},\,u\cdot\nabla]\omega \Lambda^{s}\omega\,dx +\int_{\mathbb{R}^{2}}\Lambda^{s}\nabla^{\perp}\nabla\cdot(b\otimes b)\Lambda^{s}\omega.
\end{align*}
Applying $\Lambda^{s+2}$ to $\eqref{endlogRG2DMHD}_{2}$ and multiplying it by $\Lambda^{s+2}b$, we infer
\begin{align*}
\frac{1}{2}\frac{d}{dt}\|\Lambda^{s+2}b(t)\|_{L^2}^{2}
=\int_{\mathbb{R}^{2}}{\Lambda^{s+2}(b\cdot \nabla u) \cdot \Lambda^{s+2}b\,dx} -\int_{\mathbb{R}^{2}}{[\Lambda^{s+2},u\cdot \nabla] b \cdot \Lambda^{s+2}b\,dx}.
\end{align*}
Using the same arguments used in proving Theorem \ref{Th2}, we may check
\begin{align*}
\int_{\mathbb{R}^{2}}[\Lambda^{s},\,u\cdot\nabla]\omega \Lambda^{s}\omega\,dx & \leq C\|[\Lambda^{s}\partial_{i},\,u_{i}]\omega \|_{L^{2}}\|\Lambda^{s}\omega\|_{L^{2}}\nonumber\\& \leq C(\|\nabla u\|_{L^{\infty}}\|\Lambda^{s}\omega\|_{L^{2}}+\|\omega\|_{L^{p}}\|\Lambda^{s+1}
u\|_{L^{\frac{2p}{p-2}}})\|\Lambda^{s}\omega\|_{L^{2}}
\nonumber\\&
\leq C\|\nabla u\|_{L^{\infty}}\|\Lambda^{s}\omega\|_{L^{2}}^{2}+C\|\omega\|_{L^{p}}
\|\Lambda^{s}\omega\|_{L^{2}}^{2},
\end{align*}
\begin{align*}
\int_{\mathbb{R}^{2}}\Lambda^{s}\nabla^{\perp}\nabla\cdot(b\otimes b)\Lambda^{s}\omega&\leq C\|\Lambda^{s}\nabla^{\perp}\nabla\cdot(b\otimes b)\|_{L^{2}}\|\Lambda^{s}\omega\|_{L^{2}}\nonumber\\&\leq C\|b\|_{L^{\infty}}\|\Lambda^{s+2}b\|_{L^{2}}\|\Lambda^{s}
\omega\|_{L^{2}},
\end{align*}
\begin{align*}
\int_{\mathbb{R}^{2}}{\Lambda^{s+2}(b\cdot \nabla u) \cdot \Lambda^{s+2}b\,dx} &\leq C(\|b\|_{L^{\infty}} \|\Lambda^{s+3}
u\|_{L^{2}}+\|\nabla u\|_{L^{\infty}} \|\Lambda^{s+2}
b\|_{L^{2}})\|\Lambda^{s+2}b\|_{L^{2}}
\nonumber\\&\leq C(\|b\|_{L^{\infty}} \|\Lambda^{s}
\omega\|_{L^{2}}+\|\nabla u\|_{L^{\infty}} \|\Lambda^{s+2}
b\|_{L^{2}})\|\Lambda^{s+2}b\|_{L^{2}},
\end{align*}
\begin{align*}
\int_{\mathbb{R}^{2}}{\Lambda^{s+2}(u\cdot \nabla b) \cdot \Lambda^{s+2}b\,dx}
&\leq C\|[\Lambda^{s+2}\partial_{i},
u_{i}]b_{j} \|_{L^{2}}\|\Lambda^{s+2}b\|_{L^{2}}
\nonumber\\&\leq
C(\|b\|_{L^{\infty}} \|\Lambda^{s+3}
u\|_{L^{2}}+\|\nabla u\|_{L^{\infty}} \|\Lambda^{s+2}
b\|_{L^{2}})\|\Lambda^{s+2}b\|_{L^{2}}
\nonumber\\&\leq C(\|b\|_{L^{\infty}} \|\Lambda^{s}
\omega\|_{L^{2}}+\|\nabla u\|_{L^{\infty}} \|\Lambda^{s+2}
b\|_{L^{2}})\|\Lambda^{s+2}b\|_{L^{2}}.
\end{align*}
Summing up all the above estimates, it holds that
\begin{align*} &
\frac{d}{dt}(\|\Lambda^{s}\omega(t)\|_{L^2}^{2}+\|\Lambda^{s+2}b(t)\|_{L^2}^{2})
\nonumber\\&\leq C(\|\omega\|_{L^{p}}+\|\nabla u\|_{L^{\infty}}+\|b\|_{L^{\infty}})
(\|\Lambda^{s}\omega\|_{L^2}^{2}+\|\Lambda^{s+2}b\|_{L^2}^{2})
\nonumber\\&\leq C(\|\omega\|_{L^{p}}+\|u\|_{L^{2}}+\|\Lambda^{4} \mathcal{L}^{2}u\|_{L^{2}}+\|b\|_{L^{\infty}})
(\|\Lambda^{s}\omega\|_{L^2}^{2}+\|\Lambda^{s+2}b\|_{L^2}^{2}).
\end{align*}
In view of the Gronwall inequality, we obtain by using \eqref{yelem620t03} and \eqref{yelem620t0327} that
$$\sup_{t\in[0,\,T]}(\|\Lambda^{s}\omega(t)\|_{L^2}^{2}+\|\Lambda^{s+2}b(t)\|_{L^2}^{2})
\leq C(T,v_{0},b_{0}).$$
As a result, we finish the proof of Theorem \ref{addTh2}.
\end{proof}

\vskip .3in
\appendix

\section{The proof of Theorem \ref{Th3}}
\label{aaapA}

To begin with, the basic energy estimate reads as follows.
\begin{lemma}\label{AppaAl01}
Assume $(v_{0},\,b_{0})$ satisfies the assumptions stated in
Theorem \ref{Th3}.
Then the corresponding solution $(v, b)$
of (\ref{addRG2DMHD}) admits the following bound for any $t\in[0,\,T]$
\begin{align}\label{AppaAye01}
\|u(t)\|_{L^{2}}^{2}+\|\nabla u(t)\|_{L^{2}}^{2}
+\|b(t)\|_{L^{2}}^{2}
+\int_{0}^{t}{\|\Lambda^{\beta}b(\tau)\|_{L^{2}}^{2}\,d\tau}\leq
C(v_{0},b_{0}).
\end{align}
\end{lemma}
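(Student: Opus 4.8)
The plan is to run the $L^{2}$ energy argument exactly as in Lemma \ref{RMHDY1}: take the $L^{2}$ inner product of $\eqref{addRG2DMHD}_{1}$ with $u$ and of $\eqref{addRG2DMHD}_{2}$ with $b$, add the two identities, and integrate in time. First I would rewrite the time-derivative term with $v=u-\Delta u$; after one integration by parts,
\begin{align*}
\int_{\mathbb{R}^{2}}\partial_{t}v\cdot u\,dx=\frac{1}{2}\frac{d}{dt}\big(\|u\|_{L^{2}}^{2}+\|\nabla u\|_{L^{2}}^{2}\big),
\end{align*}
while the magnetic equation produces $\tfrac12\frac{d}{dt}\|b\|_{L^{2}}^{2}+\|\Lambda^{\beta}b\|_{L^{2}}^{2}$. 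The pressure term vanishes because $\nabla\cdot u=0$, the transport term $\int(u\cdot\nabla)b\cdot b\,dx$ vanishes for the same reason, and the coupling terms $(b\cdot\nabla)b$ and $(b\cdot\nabla)u$ cancel precisely as in \eqref{cancc01}.

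The only term not covered by the computation of Lemma \ref{RMHDY1} is the convective term $\int_{\mathbb{R}^{2}}(u\cdot\nabla)v\cdot u\,dx$: since \eqref{addRG2DMHD} is a Leray-type regularization, it lacks the extra nonlinearity $\sum_{j}v_{j}\nabla u_{j}$ responsible for the cancellation \eqref{cancc02}, so handling this term is the main obstacle. I would dispose of it using the two-dimensional structure. Substituting $v=u-\Delta u$, using $\int(u\cdot\nabla)u\cdot u\,dx=0$, and integrating by parts (legitimate since $\nabla\cdot u=0$), one gets
\begin{align*}
\int_{\mathbb{R}^{2}}(u\cdot\nabla)v\cdot u\,dx=\int_{\mathbb{R}^{2}}(u\cdot\nabla)u\cdot\Delta u\,dx.
\end{align*}
In two dimensions this integral is zero: with the Lamb decomposition $(u\cdot\nabla)u=\nabla(\tfrac12|u|^{2})+\zeta\,u^{\perp}$, where $\zeta:=\partial_{1}u_{2}-\partial_{2}u_{1}$, $u^{\perp}=(-u_{2},u_{1})$, and $\Delta u=\nabla^{\perp}\zeta$ for divergence-free $u$, the gradient part integrates against $\Delta u$ to zero (as $\nabla\cdot\nabla^{\perp}=0$), while $\zeta\,u^{\perp}\cdot\nabla^{\perp}\zeta=\zeta\,(u\cdot\nabla\zeta)=u\cdot\nabla(\tfrac12\zeta^{2})$ also integrates to zero. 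Equivalently, this is nothing but the enstrophy cancellation $\int u\cdot\nabla\zeta\,\zeta\,dx=0$ combined with $\|\nabla u\|_{L^{2}}=\|\zeta\|_{L^{2}}$.

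Collecting everything yields the clean balance
\begin{align*}
\frac{1}{2}\frac{d}{dt}\big(\|u\|_{L^{2}}^{2}+\|\nabla u\|_{L^{2}}^{2}+\|b\|_{L^{2}}^{2}\big)+\|\Lambda^{\beta}b\|_{L^{2}}^{2}=0,
\end{align*}
and integrating over $[0,t]$ gives \eqref{AppaAye01}. I expect the vanishing of $\int(u\cdot\nabla)v\cdot u\,dx$ to be the only genuinely delicate point; all remaining manipulations are routine and mirror Lemma \ref{RMHDY1}. For full rigor these computations should first be performed on the smooth (approximate or local-in-time) solutions whose existence is recorded before Lemma \ref{RMHDY1}, so that every integration by parts is justified, and the estimate then passes to the limit.
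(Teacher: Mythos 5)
Your proof is correct and follows essentially the same route as the paper: both reduce the only problematic term to the identity $\int_{\mathbb{R}^{2}}(u\cdot\nabla u)\cdot\Delta u\,dx=0$ for two-dimensional divergence-free fields, the sole difference being that you prove this identity directly via the Lamb decomposition and the enstrophy cancellation $\int u\cdot\nabla\zeta\,\zeta\,dx=0$, whereas the paper performs the same integrations by parts and then cites the identity from \cite[(3.3)]{Wuxye}. All other cancellations and the treatment of the time-derivative term via $v=u-\Delta u$ match the paper's argument.
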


\begin{proof}
Taking the inner product of $(\ref{addRG2DMHD})_{1}$ with $u$ and
the inner product of $(\ref{addRG2DMHD})_{2}$ with $b$, one gets
\begin{align*}
\frac{1}{2}\frac{d}{dt}(\|u(t)\|_{L^{2}}^{2}+\|\nabla u(t)\|_{L^{2}}^{2}
+\|b(t)\|_{L^{2}}^{2})+\|\Lambda^{\beta} b\|_{L^{2}}^{2}=0,
\end{align*}
where we have used the cancelations \eqref{cancc01}-\eqref{cancc02} and the following crucial identity
$$\int_{\mathbb{R}^{2}}{(u\cdot \nabla v)\cdot
 u \,dx}=0.$$
Actually, the above equality can be deduced as
\begin{align*}
\int_{\mathbb{R}^{2}}{(u\cdot \nabla v)\cdot
 u \,dx}&=\int_{\mathbb{R}^{2}}{(u\cdot \nabla u)\cdot
 u \,dx}-\int_{\mathbb{R}^{2}}{(u\cdot\nabla\Delta u)\cdot
 u \,dx}\nonumber\\&=-\int_{\mathbb{R}^{2}}{(u\cdot\nabla \Delta u)\cdot
 u \,dx}\nonumber\\&=-\int_{\mathbb{R}^{2}}{u_{i}\partial_{i} \partial_{l}^{2} u_{j}u_{j} \,dx}\nonumber\\&=\int_{\mathbb{R}^{2}}{u_{i} \partial_{l}^{2} u_{j}\partial_{i}u_{j} \,dx}
 \nonumber\\&=\int_{\mathbb{R}^{2}}{(u\cdot \nabla u)\cdot
\Delta u \,dx} \nonumber\\&=0,
\end{align*}
where in the last line we have used the following crucial fact
due to $\nabla\cdot u=0$ (see \cite[(3.3)]{Wuxye} for details)
$$\int_{\mathbb{R}^{2}}{(u\cdot \nabla u)\cdot
\Delta u \,dx}=0.$$
Integrating in time yields the desired (\ref{AppaAye01}).
\end{proof}

\vskip .1in
According to the proof of Lemma \ref{RMHDY2} (letting $\gamma=1$), the following lemma holds immediately.
\begin{lemma}\label{AppaAl02}
Assume $(v_{0},\,b_{0})$ satisfies the assumptions stated in
Theorem \ref{Th3}. If $\beta>\frac{1}{2}$, then the corresponding solution $(v, b)$ of (\ref{addRG2DMHD}) admits the following bound
\begin{align}\label{AppaAye02}
\sup_{t\in [0,\,T]}\|\Lambda^{\beta}b(t)\|_{L^{2}}^{2}
+\int_{0}^{T}{\|\Lambda^{2\beta}b(\tau)\|_{L^{2}}^{2}\,d\tau}\leq
C(T,v_{0},b_{0}).
\end{align}
\end{lemma}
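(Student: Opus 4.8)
The plan is to observe that the system \eqref{addRG2DMHD} is exactly \eqref{eqrtyett01} with $\gamma=1$ (and $\alpha=0$), since the regularization $v=u-\Delta u$ is precisely $v=u+(-\Delta)^{1}u$. Consequently the desired bound \eqref{AppaAye02} is nothing but \eqref{RMHDTQye03} specialized to $\gamma=1$: one checks that $\beta+\gamma-1=\beta$, that $2\beta+\gamma-1=2\beta$, and that the hypothesis $\beta>1-\frac{\gamma}{2}$ collapses to exactly $\beta>\frac12$. I would therefore reproduce the argument of Lemma \ref{RMHDY2} verbatim with $\gamma=1$. Concretely, apply $\Lambda^{\beta}$ to $\eqref{addRG2DMHD}_{2}$ and pair with $\Lambda^{\beta}b$ in $L^{2}$ to obtain the energy identity
\begin{align*}
\frac{1}{2}\frac{d}{dt}\|\Lambda^{\beta}b(t)\|_{L^{2}}^{2}+\|\Lambda^{2\beta}b\|_{L^{2}}^{2}=J_{1}+J_{2},
\end{align*}
where $J_{1}=\int_{\mathbb{R}^{2}}\Lambda^{\beta}(b\cdot\nabla u)\cdot\Lambda^{\beta}b\,dx$ and $J_{2}=-\int_{\mathbb{R}^{2}}\Lambda^{\beta}(u\cdot\nabla b)\cdot\Lambda^{\beta}b\,dx$.

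Next I would estimate $J_{1}$ and $J_{2}$ exactly as in \eqref{cvfaddrt7y2}. Exploiting $\nabla\cdot b=\nabla\cdot u=0$, rewrite both integrands in the divergence forms $\Lambda^{\beta}\nabla\cdot(b\otimes u)$ and $\Lambda^{\beta}\nabla\cdot(u\otimes b)$, move one derivative onto the $\Lambda^{2\beta}b$ factor, and apply the product estimate \eqref{yzzqq1} to control $\|\Lambda(ub)\|_{L^{2}}$ by $\|\Lambda u\|_{L^{2}}\|b\|_{L^{\infty}}+\|u\|_{L^{p_{0}}}\|\Lambda b\|_{L^{2p_{0}/(p_{0}-2)}}$. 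Interpolating the magnetic norms between $\|b\|_{L^{2}}$ and $\|\Lambda^{2\beta}b\|_{L^{2}}$ and invoking the uniform Sobolev bound on $\|u\|_{H^{1}}$ furnished by the basic energy estimate \eqref{AppaAye01}, I would absorb the resulting powers of $\|\Lambda^{2\beta}b\|_{L^{2}}$ through Young's inequality, arriving at a differential inequality of the form
\begin{align*}
\frac{d}{dt}\|\Lambda^{\beta}b(t)\|_{L^{2}}^{2}+\|\Lambda^{2\beta}b\|_{L^{2}}^{2}\leq C\big(\|u\|_{H^{1}}^{a}+\|u\|_{H^{1}}^{c}\big)\|b\|_{L^{2}}^{2}
\end{align*}
for suitable exponents $a,c$ coming from the interpolation.

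The only point that genuinely uses $\gamma=1$ is the existence of the auxiliary Lebesgue exponent $p_{0}>2$: in the constraint $\frac{1-\gamma}{2}<\frac{1}{p_{0}}<\frac{2\beta-1}{2}$ from Lemma \ref{RMHDY2}, setting $\gamma=1$ reduces the lower endpoint to $0$, so the admissible interval is nonempty precisely when $\frac{2\beta-1}{2}>0$, i.e. $\beta>\frac12$, which is exactly our hypothesis. With $p_{0}$ fixed in this range, every interpolation exponent lands in its required interval and the estimates of \eqref{cvfaddrt7y2} carry over unchanged; in particular the upper bound $\frac{1}{p_{0}}<\frac{2\beta-1}{2}$ guarantees that the exponent $c$ above remains finite. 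I would then integrate in time and apply the Gronwall inequality, using that $\|u(t)\|_{H^{1}}$ and $\|b(t)\|_{L^{2}}$ are bounded uniformly on $[0,T]$ by \eqref{AppaAye01}, to conclude \eqref{AppaAye02}. I do not expect any real obstacle here beyond bookkeeping, since the whole argument is simply the $\gamma=1$ instance of an already-established lemma; the single item worth verifying is that the admissibility of $p_{0}$ survives the degeneration $\gamma\to1$, which it does under $\beta>\frac12$.
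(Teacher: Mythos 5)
Your proof is correct and is exactly the paper's approach: the paper itself states that Lemma \ref{AppaAl02} "holds immediately" from the proof of Lemma \ref{RMHDY2} with $\gamma=1$, and your check that the admissible range for $p_{0}$ degenerates to $0<\frac{1}{p_{0}}<\frac{2\beta-1}{2}$ (nonempty precisely when $\beta>\frac{1}{2}$) is the right point to verify. One small correction: \eqref{addRG2DMHD} is \emph{not} literally \eqref{eqrtyett01} with $\gamma=1$, since its velocity equation lacks the term $\sum_{j}v_{j}\nabla u_{j}$ (which is why the paper must treat the vorticity equation \eqref{AppaAye04} and the basic energy identity separately), but this is immaterial for the present lemma because your argument uses only the magnetic equation, which is identical in the two systems, together with the $H^{1}$ bound on $u$ supplied by \eqref{AppaAye01}.
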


\vskip .1in
In this case, the vorticity $\omega:=\nabla^{\perp}\cdot v$ satisfies
\begin{align}\label{AppaAye04}
\partial_{t}\omega+(u\cdot\nabla)\omega=\nabla^{\perp}\nabla\cdot(b\otimes b)+\nabla^{\perp}u_{i}\partial_{i}v.
\end{align}
which is different from \eqref{RMHDTQye07}. Due to the presence of $\nabla^{\perp}u_{i}\partial_{i}v$ in the vorticity equation \eqref{AppaAye04}, the following lemma cannot be obtained as that of Lemma \ref{RMHDY3}.
\begin{lemma}\label{AppaAl03}
Assume $(v_{0},\,b_{0})$ satisfies the assumptions stated in
Theorem \ref{Th3}. If $\beta>\frac{1}{2}$, then the corresponding solution $(v, b)$ of (\ref{addRG2DMHD}) admits the following bound
\begin{align}\label{AppaAye05}
\sup_{t\in [0,\,T]}(\|v(t)\|_{H^{1}}^{2}+\|\Lambda^{2+\beta}b(t)\|_{L^{2}}^{2})
+\int_{0}^{T}{\|\Lambda^{2+2\beta}b(\tau)\|_{L^{2}}^{2}\,d\tau}\leq
C(T,v_{0},b_{0}).
\end{align}
\end{lemma}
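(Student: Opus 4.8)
The plan is to follow the scheme of Lemma~\ref{RMHDY3} specialized to $\gamma=1$ for the coupled quantity $\|\omega\|_{L^{2}}^{2}+\|\Lambda^{2+\beta}b\|_{L^{2}}^{2}$, the only genuinely new ingredient being the control of the stretching-type term $\sum_{i}\nabla^{\perp}u_{i}\cdot\partial_{i}v$ that appears in the vorticity equation \eqref{AppaAye04}. Since this term carries no factor of $b$, no magnetic dissipation is available to absorb it, and a naive energy estimate (using the crude Biot--Savart bound $\|\nabla u\|_{L^{\infty}}\le C\|\omega\|_{L^{2}}$, the $\gamma=1$ analogue of \eqref{Rcdfr6yma1}) only yields a cubic differential inequality $\frac{d}{dt}\|\omega\|_{L^{2}}^{2}\lesssim\|\omega\|_{L^{2}}^{3}$, which may blow up in finite time. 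The first and decisive step is therefore to upgrade the a priori information on the velocity by proving a preliminary $L^{2}$-bound on $v$, equivalently $u\in L^{\infty}(0,T;H^{2})$, \emph{before} attacking the top-order estimate.

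To obtain this preliminary bound I would test $\eqref{addRG2DMHD}_{1}$ with $v$. The transport contribution drops because $\int_{\mathbb{R}^{2}}(u\cdot\nabla)v\cdot v\,dx=0$ (as $\nabla\cdot u=0$), and the gradient term vanishes by $\nabla\cdot v=0$, leaving $\frac{1}{2}\frac{d}{dt}\|v\|_{L^{2}}^{2}=\int_{\mathbb{R}^{2}}(b\cdot\nabla)b\cdot v\,dx\le\|b\|_{L^{\infty}}\|\nabla b\|_{L^{2}}\|v\|_{L^{2}}$, hence $\frac{d}{dt}\|v\|_{L^{2}}\le\|b\|_{L^{\infty}}\|\nabla b\|_{L^{2}}$. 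Because $\beta>\frac{1}{2}$, Lemma~\ref{AppaAl02} gives $\int_{0}^{T}\|b\|_{L^{\infty}}^{2}\,d\tau\le C$, while interpolating $\Lambda b$ between $\Lambda^{\beta}b\in L^{\infty}_{T}L^{2}$ and $\Lambda^{2\beta}b\in L^{2}_{T}L^{2}$ yields $\int_{0}^{T}\|\nabla b\|_{L^{2}}^{2}\,d\tau\le C$. Cauchy--Schwarz then makes the right-hand side integrable in time, so $\sup_{[0,T]}\|v\|_{L^{2}}\le C$, i.e. $\sup_{[0,T]}\|u\|_{H^{2}}\le C$.

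With $u$ bounded in $H^{2}$, I would return to the combined estimate, forming $\frac{d}{dt}\big(\|\omega\|_{L^{2}}^{2}+\|\Lambda^{2+\beta}b\|_{L^{2}}^{2}\big)+\|\Lambda^{2+2\beta}b\|_{L^{2}}^{2}$ from \eqref{AppaAye04} and from $\eqref{addRG2DMHD}_{2}$ hit with $\Lambda^{2+\beta}$. All the magnetic source and coupling terms $\int\nabla^{\perp}\nabla\cdot(b\otimes b)\,\omega$, $J_{3}$, $J_{4}$ are treated verbatim as in \eqref{RMHDTQye08} and \eqref{RMHDTQye12}--\eqref{RMHDTQye14} with $\gamma=1$: they absorb a fraction of $\|\Lambda^{2+2\beta}b\|_{L^{2}}^{2}$ and leave $C(1+\|b\|_{L^{\infty}}^{2})(1+\|\omega\|_{L^{2}}^{2})+C$, using $\|u\|_{H^{1}},\|b\|_{L^{2}}\le C$ from Lemma~\ref{AppaAl01}. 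For the stretching term I use $\|\partial_{i}v\|_{L^{2}}\lesssim\|\omega\|_{L^{2}}+\|v\|_{L^{2}}$ to reduce it to $|\int\sum_{i}\nabla^{\perp}u_{i}\cdot\partial_{i}v\,\omega\,dx|\le C\|\nabla u\|_{L^{\infty}}(1+\|\omega\|_{L^{2}}^{2})$; the point is that the $H^{2}$-bound of the previous step now permits the Brezis--Gallouet logarithmic inequality, with $\|\nabla u\|_{H^{1}}\sim\|u\|_{H^{2}}\le C$ and $\|\nabla u\|_{H^{2}}\sim\|u\|_{H^{3}}\le C(1+\|\omega\|_{L^{2}})$, giving the refined bound $\|\nabla u\|_{L^{\infty}}\le C\big(1+\sqrt{\log(e+\|\omega\|_{L^{2}})}\big)$ in place of the non-closable linear one.

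Collecting everything with $Y(t):=e+\|\omega\|_{L^{2}}^{2}+\|\Lambda^{2+\beta}b\|_{L^{2}}^{2}$, I expect an inequality of the form $\frac{d}{dt}Y\le C\,\Phi(t)\,Y\sqrt{\log Y}$, where $\Phi(t)=1+\|b\|_{L^{\infty}}^{2}\in L^{1}(0,T)$ and the dissipation $\|\Lambda^{2+2\beta}b\|_{L^{2}}^{2}$ has already been absorbed on the left. Since $\int^{\infty}\frac{dY}{Y\sqrt{\log Y}}=\infty$, Osgood's lemma gives $\sup_{[0,T]}Y\le C$, and substituting this back into the differential inequality recovers $\int_{0}^{T}\|\Lambda^{2+2\beta}b\|_{L^{2}}^{2}\,d\tau\le C$; together with the $L^{2}$-bound on $v$ from the first step this is precisely \eqref{AppaAye05}. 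The main obstacle is exactly this purely hydrodynamic stretching term: it is logarithmically critical, and the whole argument hinges on first securing the $u\in H^{2}$ bound, so that the Brezis--Gallouet together with an Osgood-type log-Gronwall argument can replace the fatal cubic estimate that a direct imitation of Lemma~\ref{RMHDY3} would otherwise produce.
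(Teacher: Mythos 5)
Your proposal is correct and follows essentially the same route as the paper: first the preliminary $\|v\|_{L^{2}}$ bound (hence $u\in L^{\infty}_{T}H^{2}$) obtained by testing $\eqref{addRG2DMHD}_{1}$ with $v$ and using Lemma \ref{AppaAl02}, then the combined energy estimate for $\|\omega\|_{L^{2}}^{2}+\|\Lambda^{2+\beta}b\|_{L^{2}}^{2}$ with the stretching term controlled by a logarithmic Sobolev inequality and closed by a log-Gronwall (Osgood) argument. The only cosmetic difference is that you invoke the Brezis--Gallouet form giving $\|\nabla u\|_{L^{\infty}}\lesssim 1+\sqrt{\log(e+\|\omega\|_{L^{2}})}$, whereas the paper uses the Kozono--Ogawa--Taniuchi inequality giving a full logarithm $\|\Delta u\|_{L^{2}}\ln\bigl(e+\|\Lambda^{3}u\|_{L^{2}}\bigr)$; both are summable in the Osgood sense and close the estimate.
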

\begin{proof}
First, taking the $L^2$-inner product of $\eqref{addRG2DMHD}_{1}$ with $v$, it gives
\begin{align}\label{AppaAyf66}
\frac{1}{2}\frac{d}{dt}\|v(t)\|_{L^{2}}^{2}&=\int_{\mathbb{R}^{2}}{(b\cdot \nabla b) \cdot v\,dx}\nonumber\\&\leq C\| b\|_{L^{\infty}}\|\nabla b\|_{L^{2}}\|v\|_{L^{2}}\nonumber\\&\leq C\|b\|_{H^{2\beta}}^{2}\|v\|_{L^{2}},
\end{align}
where here and in what follows we have used
$$\| b\|_{L^{\infty}}\leq C\|b\|_{H^{2\beta}}, \quad \beta>\frac{1}{2}.$$
Thanks to \eqref{AppaAye02}, we get from \eqref{AppaAyf66} that
\begin{align}\label{AppaAye07}
\sup_{t\in [0,\,T]}\|v(t)\|_{L^{2}}\leq
C(T,v_{0},b_{0}).
\end{align}
Keeping in mind the relation $v=u-\Delta u$, we obtain
\begin{align}\label{AppaAye08}
\sup_{t\in [0,\,T]}\|u(t)\|_{H^{2}}\leq
C(T,v_{0},b_{0}).
\end{align}
Taking the $L^2$-inner product of \eqref{AppaAye04} with $\omega$ yields
\begin{align*}
\frac{1}{2}\frac{d}{dt}\|\omega(t)\|_{L^{2}}^{2}=\int_{\mathbb{R}^{2}}
{\nabla^{\perp}\nabla\cdot(b\otimes b)\,\omega\,dx}+\int_{\mathbb{R}^{2}}{\nabla^{\perp}u_{i}\partial_{i}v\,\omega\,dx}.
\end{align*}
According to \eqref{RMHDTQye08}, one has
\begin{align*}
\int_{\mathbb{R}^{2}}
{\nabla^{\perp}\nabla\cdot(b\otimes b)\,\omega\,dx}\leq \frac{1}{4}\|\Lambda^{2+2\beta}b\|_{L^{2}}^{2}+C(1+\| b\|_{L^{\infty}}^{2})(1+\|\omega\|_{L^{2}}^{2}).
\end{align*}
Moreover, it is obvious that
$$\int_{\mathbb{R}^{2}}{\nabla^{\perp}u_{i}\partial_{i}v\,\omega\,dx}\leq C\|\nabla u\|_{L^{\infty}}\|\nabla v\|_{L^{2}}\|\omega\|_{L^{2}}\leq C\|\nabla u\|_{L^{\infty}}\|\omega\|_{L^{2}}^{2}.$$
Whence, we derive
\begin{align}\label{AppaAye09}
\frac{1}{2}\frac{d}{dt}\|\omega(t)\|_{L^{2}}^{2}\leq \frac{1}{4}\|\Lambda^{2+2\beta}b\|_{L^{2}}^{2}+C(1+\|\nabla u\|_{L^{\infty}}+\|b\|_{L^{\infty}}^{2})(1+\|\omega\|_{L^{2}}^{2}).
\end{align}
According to \eqref{cdftade117}, we deduce
\begin{align}\label{AppaAye10}
\frac{1}{2}\frac{d}{dt}\|\Lambda^{2+\beta}b(t)\|_{L^{2}}^{2}+
\frac{3}{4}\|\Lambda^{2+2\beta}b\|_{L^{2}}^{2}\leq C\|b\|_{L^{\infty}}^{2}\|\omega\|_{L^{2}}^{2}
+C\|u\|_{H^{1}}^{\frac{4(\beta+1)p_{0}}{(2\beta-1)p_{0}-2}}
\|b\|_{L^{2}}^{2},
\end{align}
where $p_{0}>\frac{2}{2\beta-1}$. Summing up \eqref{AppaAye09} and \eqref{AppaAye10} yields
\begin{align*}
\frac{d}{dt}(\|\omega(t)\|_{L^{2}}^{2}+\|\Lambda^{2+\beta}b(t)\|_{L^{2}}^{2})+
\|\Lambda^{2+2\beta}b\|_{L^{2}}^{2}\leq C(1+\|\nabla u\|_{L^{\infty}}+\|b\|_{H^{2\beta}}^{2})(1+\|\omega\|_{L^{2}}^{2}).
\end{align*}
Recalling the logarithmic Sobolev inequality (see
also \cite{KOT})
\begin{align*}
\|\nabla u\|_{L^{\infty}(\mathbb{R}^{2})}
\leq C\Big(1+\|u\|_{L^{2}(\mathbb{R}^{2})}+ \|\Delta u\|_{L^{2}(\mathbb{R}^{2})} \ln\big(e+\|
\Lambda^{\varrho}u\|_{L^{2}(\mathbb{R}^{2})}\big)\Big),\quad \varrho>2,
\end{align*}
we have by taking $\varrho=3$ that
\begin{align*}&
\frac{d}{dt}(\|\omega(t)\|_{L^{2}}^{2}+\|\Lambda^{2+\beta}b(t)\|_{L^{2}}^{2})+
\|\Lambda^{2+2\beta}b\|_{L^{2}}^{2}\nonumber\\&\leq C(1+\|\Delta u\|_{L^{2}}+\|b\|_{H^{2\beta}}^{2})\ln\big(e+\|\Lambda^{3}u
\|_{L^{2}}^{2}\big)(1+\|\omega\|_{L^{2}}^{2})\nonumber\\&\leq C(1+\|\Delta u\|_{L^{2}}+\|b\|_{H^{2\beta}}^{2})\ln\big(e+\|\omega\|_{L^{2}}^{2}
+\|\Lambda^{2+\beta}b\|_{L^{2}}^{2}\big)(1+\|\omega
\|_{L^{2}}^{2}+\|\Lambda^{2+\beta}b
\|_{L^{2}}^{2}).
\end{align*}
Making use of \eqref{AppaAye02}, \eqref{AppaAye08} and the Gronwall type inequality, we deduce that
\begin{align}\label{AppaAye11}
\sup_{t\in [0,\,T]}(\|\omega(t)\|_{L^{2}}^{2}+\|\Lambda^{2+\beta}b(t)\|_{L^{2}}^{2})
+\int_{0}^{T}{\|\Lambda^{2+2\beta}b(\tau)\|_{L^{2}}^{2}\,d\tau}\leq
C(T,v_{0},b_{0}).
\end{align}
Combining \eqref{AppaAye07} and \eqref{AppaAye11} leads to \eqref{AppaAye05}.
Therefore, we conclude the proof of Lemma \ref{AppaAl03}.
\end{proof}

\vskip .1in
With the above estimates in hand, we are now ready to prove Theorem \ref{Th3}.
\begin{proof}[{Proof of Theorem \ref{Th3}}]
Applying $(\Lambda^{\rho},\,\Lambda^{\rho+1-\beta})$ to $(\eqref{addRG2DMHD}_{1},\,\eqref{addRG2DMHD}_{2})$ and multiplying them by  $(\Lambda^{\rho}v,\,\Lambda^{\rho+1-\beta}b)$, respectively, we infer
\begin{align}\label{AppaAye12}
\frac{1}{2}\frac{d}{dt}(\|\Lambda^{\rho}v(t)\|_{L^2}^{2}+
\|\Lambda^{\rho+1-\beta}b(t)\|_{L^2}^{2})
+\|\Lambda^{\rho+1}b\|_{L^2}^{2}
=\sum_{k=1}^{4}L_{k},
\end{align}
where
$$L_{1}:=-\int_{\mathbb{R}^{2}}[\Lambda^{\rho},\,u\cdot\nabla]v\cdot \Lambda^{\rho}v\,dx,\qquad L_{2}:=\int_{\mathbb{R}^{2}}\Lambda^{\rho}\nabla\cdot(b\otimes b)\Lambda^{s}\cdot \Lambda^{\rho}v,$$
$$L_{3}:=\int_{\mathbb{R}^{2}}\Lambda^{\rho+1-\beta}(b\cdot\nabla u)\cdot \Lambda^{\rho+1-\beta}b\,dx,\qquad L_{4}:=-\int_{\mathbb{R}^{2}}\Lambda^{\rho+1-\beta}(u\cdot\nabla b)\cdot \Lambda^{\rho+1-\beta}b\,dx.$$
According to $\nabla\cdot u=0$ and \eqref{yfzdf68b}, one obtains
\begin{align} \label{AppaAye13}
L_{1} &=\int_{\mathbb{R}^{2}}[\Lambda^{\rho}\partial_{i},\,u_{i}]v_{j} \Lambda^{\rho}v_{j}\,dx\nonumber\\& \leq C\|[\Lambda^{\rho}\partial_{i},\,u_{i}]v_{j}\|_{L^{2}}\|\Lambda^{\rho}v_{j}\|_{L^{2}}\nonumber\\& \leq C(\|\nabla u\|_{L^{\infty}}\|\Lambda^{\rho}v\|_{L^{2}}
+\|v\|_{L^{4}}\|\Lambda^{\rho+1}
u\|_{L^{4}})\|\Lambda^{\rho}v\|_{L^{2}}
\nonumber\\& \leq
C(\|v\|_{H^{1}}\|\Lambda^{\rho}v\|_{L^{2}}
+\|v\|_{H^{1}}\|\Lambda^{\rho+\frac{3}{2}}
u\|_{L^{2}})\|\Lambda^{\rho}v\|_{L^{2}}
\nonumber\\& \leq C\|v\|_{H^{1}}\|\Lambda^{\rho}v\|_{L^{2}}^{2},
\end{align}
where we have used the following facts due to $v=u-\Delta u$ (see \eqref{Rcdfr6yma1} for similar proof)
$$\|\nabla u\|_{L^{\infty}}\leq C\|v\|_{H^{1}},\qquad \|\Lambda^{\rho+\frac{3}{2}}
u\|_{L^{2}}\leq C\|\Lambda^{\rho}v\|_{L^{2}}.$$
Using \eqref{yzzqq1}, it holds that
\begin{align}\label{AppaAye14}
L_{2}&\leq C\|\Lambda^{\rho}\nabla\cdot(b\otimes b)\|_{L^{2}}\|\Lambda^{\rho}v\|_{L^{2}}\nonumber\\&\leq C\|b\|_{L^{\infty}}\|\Lambda^{\rho+1}b\|_{L^{2}}\|\Lambda^{\rho}v\|_{L^{2}}
\nonumber\\&\leq C\|b\|_{H^{2+\beta}}\|\Lambda^{\rho+1}b\|_{L^{2}}\|\Lambda^{\rho}v\|_{L^{2}}
\nonumber\\&\leq\frac{1}{8}\|\Lambda^{\rho+1}b\|_{L^{2}}^{2}+ C\|b\|_{H^{2+\beta}}^{2} \|\Lambda^{\rho}v\|_{L^{2}}^{2}.
\end{align}
Due to $\nabla\cdot b=0$, we have by \eqref{yzzqq1} that
\begin{align}\label{AppaAye15}
L_{3}&=\int_{\mathbb{R}^{2}}{\Lambda^{\rho+1-\beta}\nabla\cdot
(b\otimes u) \cdot \Lambda^{\rho+1-\beta}b\,dx}
\nonumber\\&\leq C\|\Lambda^{\rho+2-2\beta}
(ub)\|_{L^{2}}\|\Lambda^{\rho+1}b\|_{L^{2}}
\nonumber\\&\leq C(\|u\|_{L^{\infty}}\|\Lambda^{\rho+2-2\beta}
b\|_{L^{2}}+\|b\|_{L^{\infty}}\|\Lambda^{\rho+2-2\beta}
u\|_{L^{2}})\|\Lambda^{\rho+1}b\|_{L^{2}}
\nonumber\\&\leq C(\|u\|_{L^{\infty}}\|b\|_{L^{2}}^{\frac{2\beta-1}{\rho+1}}\|\Lambda^{\rho+1}
b\|_{L^{2}}^{\frac{\rho+2-2\beta}{\rho+1}} +\|b\|_{L^{\infty}}\|\Lambda^{\rho}
v\|_{L^{2}})\|\Lambda^{\rho+1}b\|_{L^{2}}
\nonumber\\&\leq\frac{1}{8}\|\Lambda^{\rho+1}b\|_{L^{2}}^{2}+ C\|b\|_{L^{\infty}}^{2}\|\Lambda^{\rho}
v\|_{L^{2}}^{2}+C\|u\|_{L^{\infty}}^{\frac{2(\rho+1)}{2\beta-1}}\|b\|_{L^{2}}
^{2}\nonumber\\&\leq\frac{1}{8}\|\Lambda^{\rho+1}b\|_{L^{2}}^{2}+ C\|b\|_{H^{2+\beta}}^{2}\|\Lambda^{\rho}v\|_{L^{2}}^{2}
+C\|v\|_{H^{1}}^{\frac{2(\rho+1)}{2\beta-1}}\|b\|_{L^{2}}
^{2}.
\end{align}
Similarly, one concludes
\begin{align}\label{AppaAye16}
L_{4}&=\int_{\mathbb{R}^{2}}{\Lambda^{\rho+1-\beta}\nabla\cdot
(u\otimes b) \cdot \Lambda^{\rho+1-\beta}b\,dx}
\nonumber\\&\leq C\|\Lambda^{\rho+2-2\beta}
(ub)\|_{L^{2}}\|\Lambda^{\rho+1}b\|_{L^{2}}
\nonumber\\&\leq\frac{1}{8}\|\Lambda^{\rho+1}b\|_{L^{2}}^{2}+ C\|b\|_{H^{2+\beta}}^{2}\|\Lambda^{\rho}v\|_{L^{2}}^{2}
+C\|v\|_{H^{1}}^{\frac{2(\rho+1)}{2\beta-1}}\|b\|_{L^{2}}
^{2}.
\end{align}
Inserting \eqref{AppaAye13}, \eqref{AppaAye14}, \eqref{AppaAye15} and \eqref{AppaAye16} into \eqref{AppaAye12} yields
\begin{align}\label{AppaAye17}&
\frac{d}{dt}(\|\Lambda^{\rho}v(t)\|_{L^2}^{2}+
\|\Lambda^{\rho+1-\beta}b(t)\|_{L^2}^{2})
+\|\Lambda^{\rho+1}b\|_{L^2}^{2}\nonumber\\&\leq  C(\|v\|_{H^{1}}+\|v\|_{H^{1}}^{\frac{2(\rho+1)}{2\beta-1}}+\|b\|_{H^{2+\beta}}^{2})
(\|\Lambda^{\rho}v\|_{L^2}^{2}+
\|\Lambda^{\rho+1-\beta}b\|_{L^2}^{2}).
\end{align}
Recalling \eqref{AppaAl03}, we find
$$\int_{0}^{T}(\|v(t)\|_{H^{1}}+\|v(t)\|_{H^{1}}^{\frac{2(\rho+1)}{2\beta-1}}
+\|b(t)\|_{H^{2+\beta}}^{2})\,dt\leq C(T,v_{0},b_{0}).$$
Applying the Gronwall inequality to \eqref{AppaAye17}, we finally obtain
$$\sup_{t\in [0,\,T]}(\|\Lambda^{\rho}v(t)\|_{L^2}^{2}+
\|\Lambda^{\rho+1-\beta}b(t)\|_{L^2}^{2})
+\int_{0}^{T}\|\Lambda^{\rho+1}b(t)\|_{L^2}^{2}\,dt\leq C(T,v_{0},b_{0}).$$
Consequently, this ends the proof of Theorem \ref{Th3}.
\end{proof}

\vskip .1in
\section{Several useful facts}\setcounter{equation}{0}
\label{aaapB}

We start with the Littlewood-Paley theory. We choose
some smooth radial non increasing function $\chi$ with values in $[0, 1]$ such that $\chi\in
C_{0}^{\infty}(\mathbb{R}^{n})$ is supported in the ball
$\mathcal{B}:=\{\xi\in \mathbb{R}^{n}, |\xi|\leq \frac{4}{3}\}$ and
and with value $1$ on $\{\xi\in \mathbb{R}^{n}, |\xi|\leq \frac{3}{4}\}$, then we set
$\varphi(\xi)=\chi\big(\frac{\xi}{2}\big)-\chi(\xi)$. One easily verifies that
${\varphi\in C_{0}^{\infty}(\mathbb{R}^{n})}$ is supported in the annulus
$\mathcal{C}:=\{\xi\in \mathbb{R}^{n}, \frac{3}{4}\leq |\xi|\leq
\frac{8}{3}\}$ and satisfies
$$\chi(\xi)+\sum_{j\geq0}\varphi(2^{-j}\xi)=1, \quad  \forall \xi\in \mathbb{R}^{n}.$$
Let $h=\mathcal{F}^{-1}(\varphi)$ and $\widetilde{h}=\mathcal{F}^{-1}(\chi)$, then we introduce the dyadic blocks $\Delta_{j}$ of our decomposition by setting
$$\Delta_{j}u=0,\ \ j\leq -2; \ \  \ \ \ \Delta_{-1}u=\chi(D)u=\int_{\mathbb{R}^{n}}{\widetilde{h}(y)u(x-y)\,dy};
$$
$$ \Delta_{j}u=\varphi(2^{-j}D)u=2^{jn}\int_{\mathbb{R}^{n}}{h(2^{j}y)u(x-y)\,dy},\ \ \forall j\in \mathbb{N}.
$$
We shall also use the
following low-frequency cut-off:
$$\ S_{j}u=\chi(2^{-j}D)u=\sum_{-1\leq k\leq j-1} \Delta_{k}u=2^{jn}\int_{\mathbb{R}^{n}}{\widetilde{h}(2^{j}y)u(x-y)\,dy},\ \ \forall j\in \mathbb{N}.$$

\vskip .1in
The nonhomogeneous Besov spaces are defined through
the dyadic decomposition.
\begin{define}
Let $s\in \mathbb{R}, (p,r)\in[1,+\infty]^{2}$. The nonhomogeneous
Besov space $B_{p,r}^{s}$ is defined as a space of $f\in
S'(\mathbb{R}^{n})$ such that
$$ B_{p,r}^{s}=\{f\in S'(\mathbb{R}^{n});  \|f\|_{B_{p,r}^{s}}<\infty\},$$
where
\begin{equation}\label{1}\nonumber
 \|f\|_{B_{p,r}^{s}}=\left\{\aligned
&\Big(\sum_{j\geq-1}2^{jrs}\|\Delta_{j}f\|_{L^{p}}^{r}\Big)^{\frac{1}{r}}, \quad \forall \ r<\infty,\\
&\sup_{j\geq-1}
2^{js}\|\Delta_{j}f\|_{L^{p}}, \quad \forall \ r=\infty.\\
\endaligned\right.
\end{equation}
\end{define}

\vskip .1in
We now introduce the Bernstein's inequalities, which are useful tools in dealing with Fourier localized functions and these
inequalities trade integrability for derivatives. The following lemma provides Bernstein
type inequalities for fractional derivatives

\begin{lemma} [see \cite{BCD}]\label{vfgty8xc}
Assume $1\leq a\leq b\leq\infty$. If the integer $j\geq-1$, then it holds
$$
\|\Lambda^{k}\Delta_{j}f\|_{L^b} \le C_1\, 2^{j k  +
jn(\frac{1}{a}-\frac{1}{b})} \|\Delta_{j}f\|_{L^a},\quad k\geq 0.
$$
If the integer $j\geq0$, then we have
$$
C_2\, 2^{ j k} \|\Delta_{j}f\|_{L^b } \le \|\Lambda^{k}\Delta_{j}f\|_{L^b } \le
C_3\, 2^{  j k + j n(\frac{1}{a}-\frac{1}{b})} \|\Delta_{j}f\|_{L^a},\quad
k\in \mathbb{{R}},
$$
where $n$ is the space dimension, and $C_1$, $C_2$ and $C_3$ are constants depending on $k,a$ and $b$
only.
\end{lemma}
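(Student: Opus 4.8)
The plan is to prove both inequalities by exploiting the fact that, by construction, $\Delta_{j}f$ has Fourier support in a ball of radius $\sim 2^{j}$ (when $j=-1$) or in a dyadic annulus $\{|\xi|\sim 2^{j}\}$ (when $j\geq 0$), and then to realize every operation — multiplication by $|\xi|^{k}$ and the change of integrability from $L^{a}$ to $L^{b}$ — as a convolution against a suitably rescaled, fixed kernel. First I would fix an auxiliary $\phi\in C_{0}^{\infty}(\mathbb{R}^{n})$ equal to $1$ on the support of $\varphi$ (and of $\chi$ in the ball case), so that $\Delta_{j}f=\phi(2^{-j}D)\Delta_{j}f=2^{jn}(\mathcal{F}^{-1}\phi)(2^{j}\cdot)*\Delta_{j}f$. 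Young's convolution inequality with $1+\frac{1}{b}=\frac{1}{c}+\frac{1}{a}$ then gives $\|\Delta_{j}f\|_{L^{b}}\leq \|2^{jn}(\mathcal{F}^{-1}\phi)(2^{j}\cdot)\|_{L^{c}}\|\Delta_{j}f\|_{L^{a}}=2^{jn(\frac{1}{a}-\frac{1}{b})}\|\mathcal{F}^{-1}\phi\|_{L^{c}}\|\Delta_{j}f\|_{L^{a}}$, which is the integrability gain and fixes the exponent $n(\frac{1}{a}-\frac{1}{b})$.

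To insert the derivative factor for the upper bounds, I would absorb $|\xi|^{k}$ into the multiplier: on the relevant spectral support one has $|\xi|^{k}\phi(2^{-j}\xi)=2^{jk}\psi(2^{-j}\xi)$ with $\psi(\eta):=|\eta|^{k}\phi(\eta)$. For $j\geq 0$ the support avoids the origin, so $\psi\in C_{0}^{\infty}$ for every real $k$; for $j=-1$ one needs $k\geq 0$ so that $\psi$ remains an admissible symbol near $\xi=0$. Repeating the convolution–Young argument with $\psi$ in place of $\phi$ produces the extra factor $2^{jk}$, yielding $\|\Lambda^{k}\Delta_{j}f\|_{L^{b}}\leq C\,2^{jk+jn(\frac{1}{a}-\frac{1}{b})}\|\Delta_{j}f\|_{L^{a}}$, i.e. the first displayed inequality and the right-hand inequality of the second display.

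For the reverse (lower) inequality I would invert $\Lambda^{k}$ on the annulus. Choosing $\widetilde\varphi\in C_{0}^{\infty}$ equal to $1$ on $\mathrm{supp}\,\varphi$ and supported in an annulus bounded away from the origin, the function $\Theta(\eta):=|\eta|^{-k}\widetilde\varphi(\eta)$ is smooth and compactly supported precisely because $j\geq 0$ keeps the spectrum off the origin. The identity $\widehat{\Delta_{j}f}(\xi)=2^{-jk}\Theta(2^{-j}\xi)\,|\xi|^{k}\widehat{\Delta_{j}f}(\xi)$ then gives $\Delta_{j}f=2^{-jk}\,2^{jn}(\mathcal{F}^{-1}\Theta)(2^{j}\cdot)*\Lambda^{k}\Delta_{j}f$, and Young's inequality with $a=b$ (the rescaled kernel has $L^{1}$-norm $\|\mathcal{F}^{-1}\Theta\|_{L^{1}}$ independent of $j$) yields $2^{jk}\|\Delta_{j}f\|_{L^{b}}\leq C\|\Lambda^{k}\Delta_{j}f\|_{L^{b}}$, which is the left-hand inequality of the second display.

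The main obstacle is exactly the origin: the reverse estimate fails for the low-frequency block $\Delta_{-1}$ because its spectrum contains $\xi=0$, where $|\xi|^{-k}$ is singular and $\Theta$ cannot be taken smooth — this is why the lower bound is stated only for $j\geq 0$. The same point explains the restriction $k\geq 0$ when $j=-1$ (so that $|\eta|^{k}\phi(\eta)$ stays admissible), whereas on the annulus $k$ may be any real number. All constants depend only on $k$, $a$, $b$ through fixed norms of $\mathcal{F}^{-1}\phi$, $\mathcal{F}^{-1}\psi$ and $\mathcal{F}^{-1}\Theta$; the sharp exponential-in-$k$ form, if desired, follows by iterating the one-derivative case and controlling the growth of these kernel norms.
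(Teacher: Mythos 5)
Your argument is correct, and it is the standard proof of Bernstein's inequalities; the paper itself offers no proof of this lemma, quoting it directly from the reference \cite{BCD}, so there is nothing to compare against beyond that textbook argument, which yours reproduces faithfully (rescaled-kernel convolution plus Young's inequality for the integrability gain and the upper bounds, inversion of $|\xi|^{k}$ on the annulus for the lower bound). The only point worth flagging is in the case $j=-1$, $k>0$: the symbol $\psi(\eta)=|\eta|^{k}\phi(\eta)$ is not $C_{0}^{\infty}$ at the origin when $k$ is not an even integer, so "admissible symbol" needs the slightly more careful justification that $\mathcal{F}^{-1}\psi$ still lies in $L^{1}$ (it is bounded and decays like $|x|^{-n-k}$ at infinity), after which Young's inequality applies exactly as you describe.
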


\vskip .1in

Let us recall the following space-time estimate, which plays a crucial role in proving Theorem \ref{Th1} (see \cite[Lemma 3.1]{Yejmaa18}).
\begin{lemma}\label{afasqw6sf}
Consider the following transport-diffusion equation with $\alpha>0$
\begin{eqnarray}
\partial_{t}f+\Lambda^{2\alpha}f=g,\qquad f(x,0)=f_{0}(x).\nonumber
\end{eqnarray}
For any $0<\widetilde{\varepsilon}\leq 2\alpha$ and for any $1\leq p,\,q\leq\infty$, let both $g$ and $e^{-\Lambda^{2\alpha} t} \,\Lambda^{2\alpha-\widetilde{\varepsilon}} f_0$ belong to $L^{q}(0,\,t;L^{p}(\mathbb{R}^{n}))$, then we have
\begin{eqnarray}
\|\Lambda^{2\alpha-\widetilde{\varepsilon}}f\|_{L_{t}^{q}L_{x}^{p}} \leq C(t,f_{0})+C(t)\|g\|_{L_{t}^{q}L_{x}^{p}}.\nonumber
\end{eqnarray}
where $C(t, f_0) = \|e^{-\Lambda^{2\alpha} t} \,\Lambda^{2\alpha-\widetilde{\varepsilon}} f_0\|_{L_{t}^{q}L_{x}^{p}}$ and $C(t)$ depends on $t$ only.
\end{lemma}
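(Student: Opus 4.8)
The plan is to proceed via Duhamel's representation and to reduce the bound to a one-dimensional Young inequality in the time variable, the gain of $2\alpha-\widetilde{\varepsilon}$ derivatives being paid for by the strict positivity of $\widetilde{\varepsilon}$. Writing the solution of the linear problem as
\[
f(t)=e^{-\Lambda^{2\alpha}t}f_{0}+\int_{0}^{t}e^{-\Lambda^{2\alpha}(t-s)}g(s)\,ds,
\]
and applying $\Lambda^{2\alpha-\widetilde{\varepsilon}}$ to both sides, the homogeneous contribution $e^{-\Lambda^{2\alpha}t}\Lambda^{2\alpha-\widetilde{\varepsilon}}f_{0}$ is, by definition, precisely the term whose $L_{t}^{q}L_{x}^{p}$ norm is the constant $C(t,f_{0})$. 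Thus everything reduces to controlling the Duhamel integral $\int_{0}^{t}\Lambda^{2\alpha-\widetilde{\varepsilon}}e^{-\Lambda^{2\alpha}(t-s)}g(s)\,ds$ in $L_{t}^{q}L_{x}^{p}$.

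First I would record the $L^{1}$ kernel bound for the smoothing operator $\Lambda^{2\alpha-\widetilde{\varepsilon}}e^{-\tau\Lambda^{2\alpha}}$. Its convolution kernel $K_{\tau}=\mathcal{F}^{-1}\big[|\xi|^{2\alpha-\widetilde{\varepsilon}}e^{-\tau|\xi|^{2\alpha}}\big]$ is self-similar: the rescaling $\xi\mapsto\tau^{-1/(2\alpha)}\xi$ gives $K_{\tau}(x)=\tau^{-(n+2\alpha-\widetilde{\varepsilon})/(2\alpha)}\Phi(\tau^{-1/(2\alpha)}x)$ with $\Phi=\mathcal{F}^{-1}\big[|\xi|^{2\alpha-\widetilde{\varepsilon}}e^{-|\xi|^{2\alpha}}\big]$, and hence
\[
\|K_{\tau}\|_{L^{1}}=\tau^{-(2\alpha-\widetilde{\varepsilon})/(2\alpha)}\|\Phi\|_{L^{1}}=C\,\tau^{-1+\frac{\widetilde{\varepsilon}}{2\alpha}}.
\]
Because $\widetilde{\varepsilon}>0$, the exponent $-1+\widetilde{\varepsilon}/(2\alpha)$ is strictly greater than $-1$, so $\tau\mapsto\|K_{\tau}\|_{L^{1}}$ is integrable near $\tau=0$ and $\int_{0}^{t}\|K_{\tau}\|_{L^{1}}\,d\tau=C\frac{2\alpha}{\widetilde{\varepsilon}}t^{\widetilde{\varepsilon}/(2\alpha)}=:C(t)$ depends on $t$ alone.

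With this in hand I would close the estimate by two successive applications of Young's inequality. For fixed $s<t$, Young's convolution inequality in space gives $\|\Lambda^{2\alpha-\widetilde{\varepsilon}}e^{-\Lambda^{2\alpha}(t-s)}g(s)\|_{L_{x}^{p}}=\|K_{t-s}\ast g(s)\|_{L_{x}^{p}}\le\|K_{t-s}\|_{L^{1}}\|g(s)\|_{L_{x}^{p}}$, whence by Minkowski's integral inequality
\[
\Big\|\int_{0}^{t}\Lambda^{2\alpha-\widetilde{\varepsilon}}e^{-\Lambda^{2\alpha}(t-s)}g(s)\,ds\Big\|_{L_{x}^{p}}\le\int_{0}^{t}\|K_{t-s}\|_{L^{1}}\,\|g(s)\|_{L_{x}^{p}}\,ds.
\]
The right-hand side is a truncated time-convolution of the $L^{1}(0,t)$ function $\tau\mapsto\|K_{\tau}\|_{L^{1}}$ with $s\mapsto\|g(s)\|_{L_{x}^{p}}$, so a final Young inequality in time (of the form $L^{1}\ast L^{q}\hookrightarrow L^{q}$) yields the bound $C(t)\|g\|_{L_{t}^{q}L_{x}^{p}}$. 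Adding the two contributions produces the asserted inequality.

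The main obstacle is the verification that $\Phi=\mathcal{F}^{-1}\big[|\xi|^{2\alpha-\widetilde{\varepsilon}}e^{-|\xi|^{2\alpha}}\big]$ indeed lies in $L^{1}(\mathbb{R}^{n})$ for general $\alpha>0$, since when $2\alpha$ is not an even integer the symbol fails to be smooth at the origin and $\Phi$ only decays polynomially. This I would settle from the standard decay estimates for the generalized fractional heat kernel: the homogeneous singularity $|\xi|^{2\alpha-\widetilde{\varepsilon}}$ at the origin forces the decay $|\Phi(x)|\le C|x|^{-n-(2\alpha-\widetilde{\varepsilon})}$ at infinity, while the super-exponential decay of $e^{-|\xi|^{2\alpha}}$ keeps $\Phi$ bounded near the origin, so the two estimates together give $\Phi\in L^{1}$; alternatively one may split $\Lambda^{2\alpha-\widetilde{\varepsilon}}e^{-\tau\Lambda^{2\alpha}}=\big(\Lambda^{2\alpha-\widetilde{\varepsilon}}e^{-\frac{\tau}{2}\Lambda^{2\alpha}}\big)\,e^{-\frac{\tau}{2}\Lambda^{2\alpha}}$ and combine the two $L^{1}$-kernel factors. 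I stress that the strict inequality $\widetilde{\varepsilon}>0$ is essential: at $\widetilde{\varepsilon}=0$ the time-kernel $\tau^{-1}$ is non-integrable and the simple Young argument collapses, this being precisely the borderline maximal-regularity case that would instead require Calder\'on--Zygmund theory.
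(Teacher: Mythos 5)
Your proof is correct, and it is essentially the expected one: the paper itself gives no proof of this lemma (it is recalled from \cite[Lemma 3.1]{Yejmaa18}), and the argument there is exactly your route — Duhamel's formula, the self-similar scaling $\|\Lambda^{2\alpha-\widetilde{\varepsilon}}e^{-\tau\Lambda^{2\alpha}}\|_{L^{1}\to L^{1}}\leq C\tau^{-1+\widetilde{\varepsilon}/(2\alpha)}$, and Young's inequality in space and then in time. One small point: your stated decay $|\Phi(x)|\leq C|x|^{-n-(2\alpha-\widetilde{\varepsilon})}$ becomes non-integrable at the admitted endpoint $\widetilde{\varepsilon}=2\alpha$, but there the operator is just the semigroup itself, whose kernel decays like $|x|^{-n-2\alpha}$ and is in $L^{1}$, so your alternative splitting (or a direct appeal to the standard fractional heat kernel bounds) closes that case.
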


\vskip .1in
The following Kato-Ponce type commutator estimate and the product type estimate can be stated as follows (see \cite{KPonce} for example).
\begin{lemma} Let $s>0$. Assume that $p, p_1, p_3\in (1, \infty)$ and $p_2, p_4\in [1,\infty]$ satisfy
$$
\frac1p =\frac1{p_1} + \frac1{p_2} = \frac1{p_3} + \frac1{p_4}.
$$
Then there exists some constants $C$ such that
\begin{eqnarray}\label{yzz}
  \|[\Lambda^s, f]g\|_{L^p} \le C \left(\|\Lambda^s f\|_{L^{p_1}}\, \|g\|_{L^{p_2}} + \|\Lambda^{s-1} g\|_{L^{p_3}}\,\|\nabla f\|_{L^{p_4}}\right),
\end{eqnarray}
\begin{eqnarray}\label{yzzqq1}
  \|\Lambda^s(fg)\|_{L^p} \le C \left(\|\Lambda^s f\|_{L^{p_1}}\, \|g\|_{L^{p_2}} + \|\Lambda^{s} g\|_{L^{p_3}}\,\|\nabla f\|_{L^{p_4}}\right).
\end{eqnarray}
In some context, we also need the following variant version of (\ref{yzz}), whose proof is the same one as for (\ref{yzz})
\begin{eqnarray}\label{yfzdf68b}\|[\Lambda^{s-1}\partial_{i},f ]g \|_{L^r}
  \leq C\left(\|\nabla f \|_{L^{p_1}}\|\Lambda^{s-1} g \|_{L^{q_1}}
        +     \|\Lambda^{s} f \|_{L^{p_2}} \|  g \|_{L^{q_2}}\right).\end{eqnarray}
\end{lemma}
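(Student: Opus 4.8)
The plan is to derive all three inequalities from the Littlewood--Paley decomposition introduced in this appendix together with Bony's paraproduct calculus and the Bernstein inequalities of Lemma \ref{vfgty8xc}. Recall that any product decomposes as
\[
fg=T_{f}g+T_{g}f+R(f,g),\qquad T_{f}g:=\sum_{j\geq -1}S_{j-1}f\,\Delta_{j}g,\quad R(f,g):=\sum_{|j-k|\leq 1}\Delta_{j}f\,\Delta_{k}g,
\]
where each summand $S_{j-1}f\,\Delta_{j}g$ is spectrally supported in an annulus of radius $\sim 2^{j}$, while each diagonal block $\Delta_{j}f\,\Delta_{k}g$ of the remainder $R$ is supported in a ball of radius $\sim 2^{j}$. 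This localization is what lets $\Lambda^{s}$ be traded for the scalar factor $2^{js}$ on every piece.

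First I would prove the product estimate \eqref{yzzqq1}. Applying $\Lambda^{s}$ to $T_{g}f$ and to $R(f,g)$ and using the spectral support of the blocks, I replace $\Lambda^{s}$ by $2^{js}$; summing in $j$ and invoking H\"older's inequality with $\frac1p=\frac1{p_1}+\frac1{p_2}$ absorbs the derivatives onto $f$ through $2^{js}\|\Delta_{j}f\|_{L^{p_1}}\thickapprox\|\Lambda^{s}f\|_{L^{p_1}}$ and yields the contribution $\|\Lambda^{s}f\|_{L^{p_1}}\|g\|_{L^{p_2}}$. The paraproduct $T_{f}g$ instead places the $s$ derivatives on $g$; bounding the low-frequency factor $S_{j-1}f$ in the norm dictated by $\frac1p=\frac1{p_3}+\frac1{p_4}$ and summing produces the term built from $\|\Lambda^{s}g\|_{L^{p_3}}$ paired with the remaining norm of $f$. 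Collecting the two bounds gives \eqref{yzzqq1}.

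The commutator estimate \eqref{yzz} is the heart of the lemma. Writing $[\Lambda^{s},f]g=\Lambda^{s}(fg)-f\Lambda^{s}g$ and inserting the Bony decomposition of both products, the pieces in which all derivatives fall on $f$ (namely $\Lambda^{s}(T_{g}f+R(f,g))$ together with the matching low-high terms from $f\Lambda^{s}g$) are estimated exactly as above and contribute $\|\Lambda^{s}f\|_{L^{p_1}}\|g\|_{L^{p_2}}$. The genuinely commutator part is $\sum_{j}[\Lambda^{s},S_{j-1}f]\Delta_{j}g$: representing $\Lambda^{s}$ by its convolution kernel and performing a first-order Taylor expansion of $S_{j-1}f$ against that kernel, the principal symbols cancel, so $[\Lambda^{s},S_{j-1}f]$ acts on $\Delta_{j}g$ as an operator of order $s-1$ with one gradient landing on $f$. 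This is exactly where the gain from $\Lambda^{s}g$ down to $\Lambda^{s-1}g$ and the appearance of $\nabla f$ originate; summing in $j$ with H\"older and $\frac1p=\frac1{p_3}+\frac1{p_4}$ yields $\|\nabla f\|_{L^{p_4}}\|\Lambda^{s-1}g\|_{L^{p_3}}$.

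The main obstacle is precisely this kernel/Taylor analysis of the paraproduct commutator $[\Lambda^{s},S_{j-1}f]\Delta_{j}g$: one must verify that the cancellation of the leading symbol is quantitative and uniform in $j$, so that the dyadic summation converges and reproduces genuine $L^{p}$ norms rather than Besov norms. The variant \eqref{yfzdf68b} is then obtained by repeating the argument with $\Lambda^{s}$ replaced by $\Lambda^{s-1}\partial_{i}$; since $\partial_{i}$ is a smooth Fourier multiplier homogeneous of degree one, the same Taylor expansion of the kernel applies verbatim and only the labeling of the two output factors changes, giving $\|\nabla f\|_{L^{p_1}}\|\Lambda^{s-1}g\|_{L^{q_1}}+\|\Lambda^{s}f\|_{L^{p_2}}\|g\|_{L^{q_2}}$.
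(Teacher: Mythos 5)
Your proposal is essentially sound, but note that the paper does not prove this lemma at all: it is quoted from the literature with the citation \cite{KPonce}, and the only ``proof'' remark in the paper is that the variant \eqref{yfzdf68b} follows by the same argument as \eqref{yzz}. Your Bony-paraproduct route is therefore a genuinely different (and more modern) argument than the one behind the cited reference: Kato and Ponce originally obtained these bounds from the Coifman--Meyer commutator/multiplier machinery together with the Leibniz rule for fractional derivatives and interpolation, without dyadic paraproducts. What your approach buys is transparency about where each term on the right-hand side comes from: the high-low interaction $T_{f}g$ produces the commutator gain (via the first-order Taylor expansion of $S_{j-1}f$ against the localized kernel of $\Lambda^{s}$, which is where $\nabla f$ and the drop from $\Lambda^{s}g$ to $\Lambda^{s-1}g$ appear), while $T_{g}f$ and $R(f,g)$ put all $s$ derivatives on $f$, using $s>0$ to sum the remainder blocks. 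Two points deserve the care you partially flag: first, resumming the dyadic pieces into genuine $L^{p}$ norms (rather than Besov norms) requires the Littlewood--Paley square-function theorem, which is exactly why the hypotheses demand $p_{1},p_{3}\in(1,\infty)$ while allowing $p_{2},p_{4}\in[1,\infty]$ only on the factors estimated by uniform boundedness of $S_{j}$; second, in the kernel/Taylor step one should replace $\Lambda^{s}$ acting on the block $S_{j-1}f\,\Delta_{j}g$ by $\Lambda^{s}\widetilde{\varphi}(2^{-j}D)$ with $\widetilde{\varphi}$ equal to $1$ on an enlarged annulus, so that the kernel is Schwartz with bounds uniform in $j$ after rescaling --- this is the quantitative uniformity you correctly identify as the crux. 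With those details supplied, your argument proves all three inequalities, and your observation that \eqref{yfzdf68b} follows verbatim because the symbol $i\xi_{i}|\xi|^{s-1}$ is smooth away from the origin and homogeneous of degree $s$ matches the paper's remark that its proof is identical to that of \eqref{yzz}.
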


\vskip .2in

We end up this appendix with the proof of \eqref{Rcdfr6yma1}.

\textbf{The proof of \eqref{Rcdfr6yma1}}. According to the Lemma \ref{vfgty8xc}, we get that
\begin{align}
\|\nabla u\|_{L^{\infty}}&=\left\|\frac{\nabla v}{\mathbb{I}+\Lambda^{2\gamma}} \right\|_{L^{\infty}}
\nonumber\\&\leq  \left\|\Delta_{-1}\frac{\nabla v}{\mathbb{I}+\Lambda^{2\gamma}} \right\|_{L^{\infty}}+\sum_{j\geq0}\left\|\Delta_{j}\frac{\nabla v}{\mathbb{I}+\Lambda^{2\gamma}} \right\|_{L^{\infty}}
\nonumber\\&\leq C \left\|\Delta_{-1}\nabla v \right\|_{L^{\infty}}+C\sum_{j\geq0}\frac{1}{1+2^{2\gamma j}}\left\|\Delta_{j}\nabla v\right\|_{L^{\infty}}
\nonumber\\&\leq C \left\|\Delta_{-1}\nabla v \right\|_{L^{2}}+C\sum_{j\geq0}\frac{1}{1+2^{2\gamma j}}\left\|\Delta_{j}\omega \right\|_{L^{\infty}}
\nonumber\\&\leq C \left\| \omega \right\|_{L^{2}}+C\sum_{j\geq0}\frac{1}{1+2^{2\gamma j}}\left\|\omega \right\|_{L^{\infty}}
\nonumber\\&\leq C (\left\| \omega \right\|_{L^{2}}+\left\|\omega \right\|_{L^{\infty}}),\nonumber
\end{align}
where we have used $\gamma>0$ in the last line. This yields the first inequality of \eqref{Rcdfr6yma1}. The second one is the direct consequence of Plancherel theorem
\begin{align}
\|\Lambda^{s+\sigma}u\|_{L^{2}}&=\left\|\frac{\Lambda^{\sigma-1}}{\mathbb{I}+\Lambda^{2\gamma}} \Lambda^{s+1}v\right\|_{L^{2}}
\nonumber\\&=\left\|\frac{|\xi|^{\sigma-1}}{1+|\xi|^{2\gamma}} \widehat{\Lambda^{s+1}v}(\xi)\right\|_{L^{2}}
\nonumber\\&\leq C\left\| \widehat{\Lambda^{s+1}v}(\xi)\right\|_{L^{2}}
\nonumber\\&= C\left\| {\Lambda^{s+1}v} \right\|_{L^{2}}
\nonumber\\&\leq C\left\|\Lambda^{s}\omega\right\|_{L^{2}},\nonumber
\end{align}
where we have used the fact $\sigma\leq 1+2\gamma$ in the third line.

\vskip .2in
\textbf{Acknowledgements.}
The author is grateful to the referee for the invaluable comments and suggestions, which have improved the paper significantly.
The author was supported by the National Natural Science Foundation of China (No. 11701232) and the Natural Science Foundation of Jiangsu
Province (No. BK20170224). This work was carried out when the author was visiting the Department of Mathematics, University of Pittsburgh.
The author appreciates the hospitality of Professor Dehua Wang and Professor Ming Chen.

\vskip .2in

\end{document}